\documentclass[a4paper,11pt]{article}
\usepackage{amsmath,amssymb,amsthm}
\usepackage[latin1]{inputenc}
\usepackage[T1]{fontenc}
\usepackage{graphicx}
\usepackage{mathrsfs}
\usepackage{dsfont}
\usepackage{natbib}

\textwidth=17cm
\textheight=22cm
\hoffset=-2cm
\voffset=-1cm

\newtheorem{Th}{Theorem}

\newtheorem{Lemma}{Lemma}
\newtheorem{Hyp}{Assumption}

\def\1{\mathds{1}}
\def\var{\mathrm{Var}}

\renewenvironment{proof}{\noindent{\bf Proof.}}{\hfill $\Box$\par\noindent}

\newcommand{\E}{\ensuremath{\mathbb{E}}}
\renewcommand{\P}{\ensuremath{\mathbb{P}}}
\newcommand{\R}{\ensuremath{\mathbb{R}}}
\renewcommand{\L}{\ensuremath{\mathbb{L}}}
\newcommand{\C}{\ensuremath{\mathcal{C}}}
\renewcommand{\L}{\ensuremath{\mathbb{L}}}
\newcommand{\e}{\ensuremath{\varepsilon}}
\renewcommand{\S}{\ensuremath{\mathbb{S}}}

\newcommand{\eps}{\varepsilon}

%\graphicspath{{../RevisionAoS/}}

\title{Goodness-of-fit test for noisy directional data}
\author{Claire \textsc{Lacour} \thanks{Laboratoire de Math\'ematique, UMR 8628, Universit\'e Paris Sud, 91405 Orsay Cedex, France, Email: claire.lacour@math.u-psud.fr}\and Thanh Mai \textsc{Pham Ngoc} \thanks{Laboratoire de Math\'ematique, UMR 8628, Universit\'e Paris Sud, 91405 Orsay Cedex, France  Email: thanh.pham\_ngoc@math.u-psud.fr}}

%\date{\today --\timenow}

\begin{document}

\maketitle

\begin{abstract}
We consider spherical data $X_i$ noised by a random rotation $\varepsilon_i\in$ SO(3) so that only the sample $Z_i=\varepsilon_iX_i$, $i=1,\dots, N$ is observed. We define a nonparametric test procedure to distinguish $H_0:$ ''the density $f$ of $X_i$ is the uniform density $f_0$ on the sphere'' and $H_1:$ ''$\|f-f_0\|_2^2\geq \C\psi_N$ and $f$ is in a Sobolev space with smoothness $s$''. 
For a noise density $f_\varepsilon$ with smoothness index $\nu$, we show that an adaptive procedure (i.e. $s$ is not assumed to be known) cannot have a faster rate of separation than $\psi_N^{ad}(s)=(N/\sqrt{\log\log(N)})^{-2s/(2s+2\nu+1)}$ and we provide a procedure which reaches this rate.
We also deal with the case of super smooth noise. We illustrate the theory by implementing our test procedure for various kinds of noise on SO(3) and by comparing it to other procedures. Applications to real data in astrophysics and paleomagnetism are provided. %and show that it yields promising numerical results. 

\end{abstract}

\noindent \textbf{Keywords} : Adaptive testing, spherical deconvolution, minimax hypothesis testing, nonparametric alternatives, spherical harmonics. \\% 
\textbf{MSC 2010}. Primary 62G10, secondary 62H11.

%\newpage

\section{Introduction}
We consider the spherical convolution model. We observe:
\begin{equation}\label{model}
Z_i=\e_iX_i,\quad i=1,\ldots,N
\end{equation}
where the $\e_i$ are i.i.d.  random variables of SO(3) the rotation group in $\R^3$ and the $X_i$'s are i.i.d. random variables on $\S^2$, the unit sphere in $\R^3$. We suppose that $X_i$ and $\e_i$  are independent. We also assume that the distributions of $Z_i$ and $X_i$ are absolutely continuous with respect to the uniform measure on $\S^2$ and we set $f_Z$ and $f$  the densities of $Z_i$ and $X_i$ respectively.  The distribution of $\e_i$  is absolutely continuous with respect to the probability Haar measure on SO(3) and we will denote the density of the $\varepsilon_i$'s by $f_\varepsilon$. 
Then we have
$$f_Z=f_\e*f,$$
where $*$ denotes the convolution product which is defined below in (\ref{convolution}).

Roughly speaking, the spherical convolution model provides a setup where each genuine observation $X_i$ is contaminated by a small random rotation.
The aim of the present paper is to provide a nonparametric adaptive minimax goodness-of-fit testing procedure on $f$ from the noisy observations $Z_i$. More precisely, let $f_0$ being the uniform density on $\S^2$, we consider the problem of testing the null hypothesis $f=f_0$ with alternatives expressed in $\mathbb{L}_2$ norm over Sobolev classes. In this work, we only deal with the case of the uniform density. The tools developed in the proofs are specific to the uniform distribution and proved to be already quite technical. The cases of other fixed densities are beyond the scope of the paper. Furthermore as explained below, testing the uniform distribution is already of great interest in practice. 

Spherical data arise in many areas of scientific experimentation and observation. As examples of directional data from various fields, we instance in astrophysics the arrival directions of the Ultra High Energy Cosmic rays (UHECR), from structural geology the facing directions of conically folded planes, from paleomagnetism the measurements of magnetic remanence in rocks, from meteorology the observed wind directions at a given place and from physical oceanography the measurements of current ocean directions.  In this work, we will particularly focus on the UHECR study and paleomagnetism as some applications of our statistics procedure. 

In astrophysics, for instance, a burning issue consists in understanding the behaviour of the so-called Ultra High Energy Cosmic Rays (UHECR). These latter are cosmic rays with an extreme kinetic energy (of the order of $10^{19}$ eV) and the rarest particles in the universe. The source of those most energetic particles remains a mystery and the stake lies in finding out their origins and which process produces them. Astrophysicists have at their disposal directional data which are measurements of the incoming directions of the UHECR on Earth. Needless to say that  finding out more about the law of probability of those incoming directions is crucial to gain an insight into the mechanisms generating the UHECR.   \cite{FDKP12} recently developed isotropy goodness-of-fit tests based on the so-called needlets for the non perturbated case. Their study is focused on the practical aspect with nice simulations connected to realistic cosmic rays scenarios. But the difficulty lies in the fact that the observed UHECR do 
not come necessarily from the genuine direction as specified by  \cite{FDKP12}.  Their trajectories are deflected by Galactic and intergalactic fields. As this deflection is inevitable in the measurements, it is quite challenging and essential to take into account this uncertainty in the statistical modelling. A first way to model the deflection in the incoming directions can be done thanks to the model (\ref{model}) with random rotations.

% To the best of our knowledge, goodness-of-fit testing in spherical convolution model does not exist. Our results are motivated by astrophysical interests.

 Concerning the hypotheses about the underlying probability of the incoming directions, several are made. A uniform density would suggest that the UHECR are generated 
by cosmological effects, such as the decay of relic particles from the Big Bang. On the contrary, if these UHECR are generated by astrophysical phenomena (such as acceleration into Active Galactic Nuclei (AGN)), then we should observe a density function which is highly non-uniform
and tightly correlated with the the local distribution of extragalactic supermassive black holes at the center of nearby galaxies (AGN).  First results seemed to favour a non-uniform density but as underlined by \cite{FDKP12}, a more recent analysis based on 69 observations of UHECR softens this conclusion of anisotropy. To this prospect, these relevant considerations lead naturally to goodness-of-fit testing on the uniform density in the noisy model (\ref{model}).

Considering goodness-of-fit testing in the spherical convolution model not only finds its interest in the above important applications, but it also fills a gap both in the noisy setup testing literature and the spherical convolution one. Indeed, convolution models have been extensively studied in the Euclidean setting (see for instance \cite{fan91}, \cite{PenskyVida99}, \cite{CRT}, \cite{Butuceatsybakov}, \cite{Kalifa}, \cite{Meister09} and references therein), and more recently 
in other geometric frameworks, like the hyperbolic plane (see \cite{Huckemann}) or the sphere. However, so far, only estimation has been treated in the spherical setup. For the nonparametric estimation problem, one is interested in recovering the underlying density $f$ from noisy observations $Z_i$. The pioneer works of \cite{HealyHendriksKim},
\cite{kimkoo02}, \cite{kimkoo04} introduced a minimax estimation procedure based on the Fourier basis of $\L_2(\S^2)$. Recently, \cite{KPP11} proposed an optimal and adaptive hard thresholding estimation procedure based on needlets. 

Nonparametric goodness-of-fit testing has aroused a lot of interest.  For minimax testing, we refer to the work of \cite{Ingster93} which is the main reference in the field. \cite{Spok96} first established adaptive testing procedure based on wavelets over Besov bodies. Nonetheless, goodness-of-fit testing has mainly focused on the case of direct observations. Indeed, very few works have been devoted to the case of indirect observations. Let us cite the works of \cite{Bissantz2009} for the inverse regression problem and \cite{Holzmann} for the multivariate convolution density model.  \cite{butucea07} built minimax nonparametric goodness-of-fit testing for convolution models based on kernels methods and \cite{butuceamatiaspouet09} made a step forward by building an adaptive testing procedure in the noisy setup.

 We would also like to bring to the reader's attention some interesting facts when encountering testing problems with indirect observations.  Indeed, there is a natural connection between the following approaches : to test $f=f_0$ or to test $f_\e*f= f_\e*f_0$.
%We can first observe that, despite it entails the inversion of the noise operator, it appears  more relevant to express alternatives $H_1$ in terms of $f$. 
This question has been the object of the recent work of \cite{laurentloubesmarteau11} and has been previously evoked by \cite{butuceamatiaspouet09}. 
In the case of the convolution model on the real line, Laurent, Loubes and Marteau prove that if a test procedure is minimax for 
testing problem : $H_0^{D}: f_\varepsilon* f=f_\varepsilon* f_0$ versus $H_1^{D}:f_\varepsilon* (f-f_0)\in \mathcal{F}_D$ where
$$\mathcal{F}_D=\{g \text{ with smoothness } s' \text{ and } \|g\|^2\geq C'n^{-4s'/(4s'+1)}, \text{ with }s'=s+\nu\}, $$
then it is minimax for $H_0^{I}:  f= f_0$ versus $H_1^{I}: f-f_0\in \mathcal{F}_I$ where 
$$\mathcal{F}_I=\{f \text{ with smoothness } s \text{ and } \|f\|^2\geq Cn^{-4s/(4s+4\nu+1)}\}$$ but the reverse is not true
(here $n$ is the number of data and $\nu$ the smoothness index of the noise). 
This interesting conclusion (that we can conjecture true in our context also) does not make it any the less necessary to study the inverse problem here. Indeed, until the present work, the minimax rates were not known in the context of noisy spherical data. Moreover, when dealing with adaptive procedures, the link between the direct and inverse problems is not established yet.

In the present paper, the whole difficulty actually lies in the spherical geometry which complicates every steps that one encounters on $\R$.  Indeed, the efficient test statistic of  \cite{butucea07} was built upon a deconvolution kernel estimator of the quadratic functional $\int(f-f_0)^2$. It is well-known that such an estimator is  closely linked to the Fourier transform on $\R$. There exist kernel methods to treat density estimation for spherical data but  only for direct observations (see \cite{hallwatsoncabrera87}, and \cite{BRZ88}). Here in the spherical convolution context, Fourier analysis has a different behaviour and we resort to existing procedures to estimate the quadratic risk $\int(f-f_0)^2$. Those procedures (see \cite{kimkoo02}) are based on Fourier series which come down to projections. Consequently, the approach proves to be quite different than the one on the real line. The difficulty of testing in a spherical deconvolution model can be seen in the following way. If you use an orthogonal 
basis $(\psi_k)$ to estimate the unknown function $f$, then using U-statistics requires that the ``deconvolved`` basis $\phi_k$ (s.t. $\psi_k=f_\varepsilon*\phi_k$) is also (almost) orthogonal, which is delicate to realize. 
Thus one has to circumvent new problems linked to estimation of the quadratic functional, Fourier series, spherical context and convolution model setting.
This explains why we choose to use spherical harmonics and their good properties in terms of orthogonality. 

In this work, we establish several results for both smooth and supersmooth noises. We exhibit the optimal rates for non adaptive and adaptive cases.  We would like to stress that, in general, adaptivity implies a small loss in the minimax rate. Here, it is actually the case and our adaptive lower bound  proves that this loss is the least possible.  Furthermore we prove that our statistical procedure exactly attains these optimal rates. To complete these theoretical results, we implemented simulations to compare the performances of our procedure with other well-known directional testing procedures and applied our method to real data in paleomagnetism and UHECR incoming directions.

The plan of the paper is as follows. In Section 2, we give a brief overview about harmonic analysis on SO(3) and $\mathbb{S}^2$ which will be necessary throughout the paper. 
In Section 3 we define the test hypotheses and the smoothness assumptions about the unknown density $f$ and the noise $\varepsilon_i$. We also introduce the adaptive goodness-of-fit testing procedure. In Sections 4 and 5 we compute lower and upper bounds for testing rates for the ordinary smooth noise case.  The super smooth noise case is treated in Section 6. Finally, we give a simulation study  and applications on real data in Section 7. The proofs of the results are detailed in Section 8.

\section{Some preliminaries about harmonic analysis on SO(3) and \protect{$\mathbb S^2$}}

This part provides a brief overview of Fourier analysis on SO(3) and
$\mathbb{S}^2$. Most of the material can be found in expanded form
in \cite{HealyHendriksKim}, \cite{kimkoo02} \cite{vilenkin68}, \cite{talman68}, \cite{terras85} . 

Let $\mathbb{L}_2(\rm{SO(3)})$ denote the space of square integrable functions on SO(3), that
is, the set of measurable functions $f$ on SO(3) for which
$$\| f\|_2= \left (\int_{\rm{SO(3)}} |f(x)| ^2 dx \right )^{\frac 1 2} < \infty, $$
where $dx$ is the Haar measure on SO(3).

Let $D^l_{mn}$ for $-l\le
m,\;n\leq l,\;l=0,\;1,\ldots$ be the eigenfunctions of the Laplace
Beltrami operator on SO(3), hence, $\sqrt{2l+1} D^l_{mn},\; -l\le
m,\;n\leq l,\;l=0,\;1,\ldots$ is a complete orthonormal basis for
$\mathbb{L}_2({\rm SO(3)})$ with respect to the probability Haar measure.
Explicit formulae of the rotational harmonics $D^l_{mn}$ in terms of Euler angles exist but
we do not need it here. 
Next, for  $f \in \mathbb{L}_2({\rm SO(3)})$, we define the rotational
Fourier transform on SO(3) by the $(2l+1)\times (2l+1)$ matrices $f^{\star l}$ with entries
\begin{equation*}\label{rotationalFourierTransform}
 f_{mn}^{\star  l}=\int_{{\rm SO(3)}}f(g)D^l_{mn}(g)dg,
\end{equation*}
 where 
$dg$ is the probability Haar measure on SO(3).
 The rotational inversion can be obtained by
 \begin{align}\label{4}
f(g)&=\sum_{l\geq 0}\sum_{-l\le m,\;n\leq l} f_{mn}^{\star l}(2l+1){\overline {D^l_{mn}(g)}}.
\end{align}
(\ref{4}) is to be understood in  $\mathbb{L}_2$-sense although with
additional smoothness conditions, it can hold pointwise.

A parallel spherical Fourier analysis is available on
$\mathbb{S}^2$. Any point on $\mathbb{S}^2$ can be represented by
$$\omega=(\cos\phi\sin\theta,\sin\phi\sin\theta,\cos\theta)^t,$$
with  $\phi\in[0,2\pi),\;\theta\in [0,\pi)$. We also define the
functions:
\begin{equation*}
Y^l_m(\omega)=Y^l_m(\theta,\phi)=%(-1)^m
\sqrt{\frac{(2l+1)}{4\pi }\frac{(l-m)!}{(l+m)!}}%
P^l_m(\cos \theta )e^{im\phi}, \label{5} 
\end{equation*}
for  $-l\le m\;\leq l,\;l=0,\;1,\ldots$,
$\phi\in[0,2\pi),\;\theta\in [0,\pi)$ and where $P^{l}_{m}$ are the associated Legendre functions. 
The functions $Y^{l}_{m}$ obey
\begin{equation}\label{harmonique_negative}
Y^{l}_{-m}(\theta,\phi)={(-1)}^m\overline{{Y}^{l}_{m}(\theta,\phi)}.
\end{equation}

Let $\mathbb{L}_2(\S^2)$ denote the space of square integrable functions on $\S^2$, that
is, the set of measurable functions $f$ on $\S^2$ for which
$$\| f\|_2= \left (\int_{\S^2} |f(x)| ^2 dx \right )^{\frac 1 2} < \infty, $$
where $dx$ is the Lebesgue measure on the sphere
$\mathbb{S}^2$.
It is well-known that $\mathbb{L}_2(\S^2)$ is a Hilbert space with the inner product
$$ \langle f,g\rangle_{\mathbb{L}_2}= \int_{\S^2} f(x)\overline{g(x)}dx, \quad f,g \in \L_2(\S^2).$$
The set $\{Y^l_m,\; -l\le m\;\leq l,\;l=0,\;1,\ldots\}$ is forming
an orthonormal basis of $\mathbb{L}_2(\mathbb{S}^2)$, generally
referred to as the spherical harmonic basis.
Again, as above, for  $f \in \mathbb{L}_2(\mathbb{S}^2)$, we define
the spherical Fourier transform on $\mathbb{S}^2$ by

\begin{equation}
f_{m}^{\star l}=\int_{\mathbb{S}^2}f(x){\overline{
Y^l_{m}(x)}}dx. \label{7}
\end{equation}
We think of (\ref{7}) as the vector entries of the
$(2l+1)$ vector 
$$ f^{\star l}=[ f_{m}^{\star l}]_{-l\le m \leq l},\;l=0,\;1,\ldots$$
 The spherical inversion can be obtained by
 \begin{align*}
f(\omega)=\sum_{l\geq 0}\sum_{-l\le m \leq l}f_{m}^{\star l}{Y^l_{m}(\omega)}.
\label{8}
\end{align*}
The bases detailed above are important because they realize a
singular value decomposition of the convolution operator created by
our model. In effect, we define for $f_\eps\in \mathbb{L}_2({\rm SO(3) }),
\; f\in \mathbb{L}_2(\mathbb{S}^2)$ the convolution by the following
formula:
\begin{equation}\label{convolution}
f_\eps*f(\omega)=\int_{{\rm SO(3)}}f_\eps(u)f(u^{-1}\omega)du
\end{equation}
and we have for all $-l\le m\;\leq l,\;l=0,\;1,\ldots$,
\begin{equation}
({f_\eps*f})_m^{\star l}=\sum_{n=-l}^l (f^{\star l}_{\eps})_{mn}
f^{\star l}_n=( f_\eps^{\star l} f^{\star l})_m .\label{convprod}
\end{equation}

We shall recall some basic facts which will be useful throughout the paper.
Let $\mathbb{H}_l$ the vector space spanned by $\{  Y^l_m= -l\leq m\leq l \}$ for each $l=0,1,\dots$.  Any element $h\in \mathbb{H}_l$  can be written as $h=\sum_{m=-l}^l h^{\star l}_{m} Y^l_m$ and thanks to Parseval equality we have $ \| h\|^2_2= \sum_{m=-l}^{l} | h^{\star l}_m|^2.$
Now according to (\ref{convprod}) we have 
$$ f^{\star l}_{\eps}: \mathbb{H}_l \rightarrow \mathbb{H}_l  \; \textrm{defined by} \; f^{\star l}_{\eps}h =\sum_{m=-l}^l\left (  \sum_{n=-l}^l (f^{\star l }_{\eps})_{ mn} h^{\star l}_n\right )Y^l_m.$$
% By Parseval equality we get 
% $$ \| f_{\eps}^{\star l} h \|^2_2=\sum_{m=-l}^{l} | \sum_{n=-l}^l f^{\star l}_{\eps, mn} h^{\star l}_{n}|^2,\, \textrm{for all}\;  l\geq 0.$$
We finally get the operator inequality
\begin{equation*}
\| f^{\star l}_{\eps} h\|_2 \leq \| f^{\star l}_{\eps}\|_{op} \| h\|_2, \quad \textrm{where}\quad  \| f^{\star l}_{\eps}\|_{op}= \sup_{h\neq 0, h\in \mathbb{H}_l} \frac{\| f^{\star l}_{\eps} h  \|_2}{\| h\|_2}.
\end{equation*}

\section{Model and assumptions}
We would like to present our results in terms of Sobolev classes (see e.g. \cite{HealyHendriksKim} for a definition on the sphere).
On the space $C^{\infty}(\S^2)$ of infinitely continuous differentiable functions on $\S^2$, consider the so-called Sobolev norm $\| \|_{W_s}$ of order $s$ defined in the following way. For any function $f=\sum_{lm} f^{\star l}_m Y^l_m$ let 
\begin{equation}\label{sobolevnorm}
 \| f\|_{W_s}^2= \sum_{l\geq 0} \sum_{m=-l}^l (1+l(l+1))^s | f^{\star l}_m|^2.
 \end{equation}
We denote by $W_s(\S^2)$ the vector space completion of $C^{\infty}(\S^2)$ with respect to the Sobolev norm (\ref{sobolevnorm}) of order $s$. 
Note that for any probability density $f$, 
$\| f\|_{W_s}^2\geq   | f^{\star 0}_0|^2=(4\pi)^{-1}$. 
Then, for some fixed constant $R>0$, let $W_s(\S^2,R)$ denote the smoothness class of densities $f\in W_s(\S^2)$ which satisfy
\begin{equation} \label{Sobolev}
\| f\|_{W_s}^2 \leq \frac1{4\pi}+R^2.
\end{equation}

For the uniform density of probability on the sphere namely $f_0={(4\pi)}^{-1}\1_\mathbf{\S^2}$, we want to test the hypothesis
$$ H_0: \quad f=f_0,$$
from observations $Z_1,\dots,Z_N$ given by model (\ref{model}). 
We consider the alternative
 $$H_1(s, R, \mathcal{C}\psi_N): f \in W_s(\S^2,R) \; \textrm{and}\;  \|f-f_0\|^2_2\geq \C\psi_N $$ 
where $\C$ is a constant and $\psi_N$ is the testing rate. 

We will say that the distribution of $\varepsilon$ is ordinary smooth of order $\nu$ if the rotational Fourier transform of $f_\eps$ satisfies the following assumption.

\begin{Hyp} For all $l\geq 0$, the matrix $f_\eps^{\star l}$ is invertible and there exist positive constants $d_0, d_1, \nu$ such that 
 $$\| f^{\star l}_{\varepsilon^{-1}}\|_{op} \leq d_0^{-1}l^\nu\quad\text{ and }\quad\| f^{\star l}_{\varepsilon} \|_{op} \leq d_1 l^{-\nu},$$
where we have denoted the matrix  $(f_\eps^{\star l})^{-1}$ by $f^{\star l}_{\eps^{-1}}$. 
\end{Hyp}
%\newline \\

%\noindent
Recall that we assume that $f_\varepsilon$ is known, consequently $d_0$ and $\nu$ are also considered known. 
Some examples satisfying this assumption are given in Section~\ref{simus}.

In order to build a test statistic, as usual, we first have  to construct an unbiased estimator of the quadratic functional $\int_{\S^2} (f-f_0)^2= \| f-f_0\|^2_2$. To do so, we  
remark that thanks to Parseval equality: 
$$ \int_{\S^2} (f-f_0)^2= \sum_{l\geq 0} \sum_{m=-l}^l  | f^{\star l}_m - {f_0}^{\star l}_m |^2=   \sum_{l\geq 1} \sum_{m=-l}^l  | f^{\star l}_m|^2,$$
the last equality coming from the fact that $({f_0})^{\star l}_{m} \neq 0$ only for $(l,m)=(0,0)$. %and we have $\langle f_0 , Y^0_0\rangle=  \langle f , Y^0_0 \rangle$. %Following Healy Hendriks and Kim \cite {} (ne pas les citer ici) we have
Since $ f^{\star l}=f^{\star l}_{\eps^{-1}} f_Z^{\star l}$ for $l=0,1,\dots$, we can write under Assumption 1
$$f^{\star l}_m= \sum_{n=-l}^l  (f^{\star l}_{\eps^{-1}})_{mn} (f_Z^{\star l})_n.$$
A natural estimator of $f^{\star l}_m$ is given by
$$  \hat{f}^{\star l}_m=\frac 1N \sum_{i=1}^N \sum_{n=-l}^l   (f^{\star l}_{\eps^{-1}})_{mn}  \overline{Y^l_n(Z_i)}.$$
If we denote by $\Phi_{lm}(x)=\sum_{n=-l}^l (f^{\star l}_{\eps^{-1}})_{mn} \overline{Y_n^l}(x)$
then 
$$ \hat{f}^{\star l}_m = \frac 1 N \sum_{i=1}^N \Phi_{lm} (Z_i).$$
Consequently, we can derive an unbiased estimator $T_{lm}$ of $|f^{\star l}_m |^2$
$$ T_{lm}= \frac{2}{N(N-1)}\sum_{i_1<i_2}\Phi_{lm}(Z_{i_1})\overline{\Phi_{lm}(Z_{i_2})},$$
and finally an estimator of $\| f-f_0\|^2_2$
\begin{equation*}\label{teststat}
T_L=\sum_{l=1}^L\sum_{m=-l}^l\frac{2}{N(N-1)}\sum_{i_1<i_2}\Phi_{lm}(Z_{i_1})\overline{\Phi_{lm}(Z_{i_2})}.
\end{equation*}

We can now define a test procedure 
$$\Delta=\begin{cases}
          1 \quad \text{ if }|T_L|>t^2\\
          0 \quad \text{ otherwise}
         \end{cases}$$
for a threshold $t^2$ to be suitably chosen. The choice of $L$ is crucial too, and this point will be solved in Sections 5 \& 6.\\

As one may have noticed, the noise smoothness hypothesis and hence the test procedure only rely on the Fourier transform of the noise density $f_\varepsilon$. Consequently, we do not need the existence of the density $f_\varepsilon$ but only the existence of the characteristic function $\E(D^{l}_{mn}(\varepsilon))$ of the variable $\varepsilon$.

\section{Lower bound for testing rate}

It is known that the rate of separation in the case of direct observations in dimension two is $N^{-{4s}/{4s+2}}$ when one considers for the alternative functions belonging to Sobolev ellipsoid in dimension 2
 (see \cite{IngsterSapatinas2009}). Let us see how it is
modified by the presence of a noise with smoothness $\nu.$

\begin{Th}\label{bi}
 Let  $s\geq 1$ and $\psi_N=N^{-2s/(2s+2\nu+1)}$.  Let $\eta\in (0,1) $. 
If ${\cal C}\leq K R^2$ where $K$ is a constant only depending on $d_0,d_1,\nu,s,\eta$, then 
$$\liminf_{N\to\infty}\inf_{\Delta_N}\left\{\P_{f_0}(\Delta_N=1)+\sup_{f\in H_1(s,R,{\cal C}\psi_N)}\P_f(\Delta_N=0)\right\}\geq  \eta$$
where the infimum is taken over all test procedures $\Delta_N$  based on the observations 
$ Z_1,\dots, Z_N$. 
\end{Th}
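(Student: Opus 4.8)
The plan is to use the classical reduction of minimax testing lower bounds to a two-point (or many-point mixture) argument, in the spirit of Ingster and of Baraud. The goal is to exhibit a prior distribution $\pi$ supported on $H_1(s,R,\mathcal{C}\psi_N)$ such that the $\chi^2$-divergence (or the $\mathbb{L}_2$-distance of likelihood ratios) between $\P_{f_0}^{\otimes N}$ and the mixture $\P_\pi^{\otimes N} = \int \P_f^{\otimes N}\,d\pi(f)$ stays bounded as $N\to\infty$. Concretely, I would fix a resolution level $L = L_N$ (to be tuned) and consider perturbations of the uniform density of the form $f_\theta = f_0 + \gamma \sum_{m=-L}^{L} \theta_m Y^L_m$, where $\theta = (\theta_m)$ ranges over a sign pattern ensuring $f_\theta$ is real (using \eqref{harmonique_negative}) and the $\theta_m$ are i.i.d. Rademacher-type signs, and $\gamma = \gamma_N$ is an amplitude. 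The Sobolev constraint \eqref{Sobolev} forces $\gamma^2 (2L+1)(1+L(L+1))^s \lesssim R^2$, i.e. $\gamma^2 \asymp R^2 L^{-2s-1}$, while the separation requirement gives $\|f_\theta - f_0\|_2^2 = \gamma^2(2L+1) \asymp R^2 L^{-2s} \geq \mathcal{C}\psi_N$. Choosing $L_N \asymp N^{2/(2s+2\nu+1)}$ makes $R^2 L^{-2s} \asymp R^2 N^{-2s/(2s+2\nu+1)} = R^2\psi_N$, which is why $\mathcal{C}$ must be taken smaller than a constant times $R^2$.

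The second step is the divergence computation. Since the observations are the noised data $Z_i = \e_i X_i$, the relevant density is $f_{Z,\theta} = f_\e * f_\theta = f_0 + \gamma\, f_\e * \big(\sum_m \theta_m Y^L_m\big)$, and by \eqref{convprod} the Fourier block at level $L$ is multiplied by the matrix $f_\e^{\star L}$, so $\|f_{Z,\theta} - f_0\|_2^2 \lesssim \gamma^2 \|f_\e^{\star L}\|_{op}^2 (2L+1) \lesssim \gamma^2 d_1^2 L^{-2\nu}(2L+1)$. Expanding the $N$-fold likelihood-ratio $\mathbb{L}_2$ distance and using independence of the Rademacher signs, the standard bound gives
$$
\E_{f_0}\Big[\Big(\frac{d\P_\pi^{\otimes N}}{d\P_{f_0}^{\otimes N}}\Big)^2\Big] - 1 \;\leq\; \exp\!\big(c\, N^2 \gamma^4 d_1^4 L^{-4\nu} (2L+1)^2 / (4\pi)^2\big) - 1
$$
up to cross-term bookkeeping (here one uses $\int |Y^L_m|^2 = 1$ and the Gegenbauer addition formula to control $\sum_m |Y^L_m(z)|^2 = (2L+1)/(4\pi)$, keeping the fourth moments under control). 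Plugging $\gamma^2 \asymp R^2 L^{-2s-1}$ and $L \asymp N^{2/(2s+2\nu+1)}$, the exponent is $\asymp N^2 R^4 L^{-4s-2-4\nu+2} = N^2 R^4 L^{-(4s+4\nu+1) - 1}\cdot L^{?}$ — one checks the powers cancel so that it is $O(R^4)$, a bounded constant; by taking $\mathcal{C} \leq K R^2$ with $K$ small this constant is made as small as desired, so the $\chi^2$-divergence tends to $0$ (or stays below the threshold forcing the sum of errors $\geq \eta$) via the Le Cam / Pinsker-type inequality $\inf_{\Delta_N}\{\P_{f_0}(\Delta_N=1) + \sup_f \P_f(\Delta_N=0)\} \geq 1 - \tfrac12\|\P_{f_0}^{\otimes N} - \P_\pi^{\otimes N}\|_{TV} \geq 1 - \tfrac12\sqrt{\chi^2}$.

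The main obstacle I anticipate is twofold: first, controlling the fourth-moment / cross terms in the likelihood-ratio expansion uniformly in the spherical geometry — one needs sharp control of quantities like $\E_{f_0}[\Phi_{Lm}(Z)\overline{\Phi_{Lm'}(Z)}]$ and of $\int |\sum_m \theta_m (f_\e * Y^L_m)|^2$, which requires using the operator bound on $f_\e^{\star L}$ from Assumption 1 and the orthonormality of the $Y^L_m$ rather than any explicit formula; second, checking that the perturbed densities $f_\theta$ are genuinely nonnegative, which needs $\gamma \sup_z |\sum_m \theta_m Y^L_m(z)| \leq (4\pi)^{-1}$, i.e. $\gamma (2L+1)/(4\pi)^{1/2} \cdot (\text{something})^{1/2}\lesssim 1$ — this is where a possible extra $\sqrt{\log}$ or a careful choice of a \emph{single} fixed perturbation (two-point argument, $\theta$ deterministic) versus the randomized prior matters. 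Given that the adaptive lower bound later carries a $\sqrt{\log\log N}$ factor but this non-adaptive one does not, I expect the authors use a pure two-point argument here (a single alternative $f_1$), which sidesteps the fourth-moment issue entirely at the cost of only getting a constant lower bound $\eta$ rather than $1 - o(1)$; in that case the positivity constraint is checked directly for the one chosen $f_1$, and the $\chi^2$ computation reduces to a single Kullback or Hellinger bound between $\P_{f_0}^{\otimes N}$ and $\P_{f_1}^{\otimes N}$, namely $N \cdot \mathrm{KL}(f_{Z,1}, f_0) \lesssim N \|f_{Z,1}-f_0\|_2^2 \lesssim N\gamma^2 d_1^2 L^{-2\nu}(2L+1)$, which is $O(1)$ for the above choice of $L$.
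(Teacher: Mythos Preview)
Your initial plan (randomised Rademacher prior over spherical-harmonic coefficients, then a $\chi^2$ bound on the mixture) is exactly the approach the paper takes. Your later retreat to a two-point argument is a genuine mistake, and the claim that ``$N\cdot\mathrm{KL}(f_{Z,1},f_0)\lesssim N\gamma^2 d_1^2 L^{-2\nu}(2L+1)$ is $O(1)$'' is false. With the correct resolution $L\asymp N^{1/(2s+2\nu+1)}$ (your $N^{2/(2s+2\nu+1)}$ is a slip: it would give separation $L^{-2s}\asymp\psi_N^2\ll\psi_N$), one has $N\gamma^2 L^{-2\nu+1}\asymp N L^{-2s-2\nu}\asymp N^{1/(2s+2\nu+1)}\to\infty$, so a single alternative is always asymptotically distinguishable from $f_0$ and cannot produce the testing lower bound. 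This is the standard reason why minimax \emph{testing} lower bounds require a mixture rather than a two-point construction; the square in $N^2\gamma^4$ coming from the Rademacher averaging is precisely what makes the $\chi^2$ exponent bounded.

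Two technical points where the paper's execution differs from your sketch and which resolve the obstacles you flag. First, the paper perturbs $f$ not by $Y_m^l$ but by $\varphi_{lm}$ defined via $f_\varepsilon*\varphi_{lm}=Y_m^l$ (so $\varphi_{lm}=\sum_n(f_{\varepsilon^{-1}}^{\star l})_{nm}Y_n^l$, using real harmonics). Then the \emph{observed} density is $p_\theta=f_0+\sum\theta_{lm}Y_m^l$, and the likelihood-ratio cross terms collapse to $\int 4\pi p_\theta p_{\theta'}=1+4\pi\sum_{lm}\theta_{lm}\theta'_{lm}$ by orthonormality, giving a clean $\prod\cosh(4\pi N\gamma^2)$ bound with no matrix bookkeeping. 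The smoothing index $\nu$ then enters only through the Sobolev and separation checks on $f_\theta$ (via $\|f_{\varepsilon^{-1}}^{\star l}\|_{op}$), not through the $\chi^2$ computation. Second, positivity is handled deterministically: writing $|f_\theta(x)-f_0(x)|^2\le\sum_{l,n}|\langle f_\theta-f_0,Y_n^l\rangle|^2\cdot\sum_{l,n}|Y_n^l(x)|^2$ and using $\sum_n|Y_n^l|^2=(2l+1)/(4\pi)$ together with the operator bound gives $\|f_\theta-f_0\|_\infty\lesssim\gamma L^{\nu+2}\asymp L^{1-s}$, which is bounded precisely under the hypothesis $s\ge1$; no $\sqrt{\log}$ factor or probabilistic argument is needed. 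The paper also uses a band $L/2\le l<L$ rather than a single level, but this is a minor convenience.
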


This means that testing with a faster rate than $\psi_N=N^{-2s/(2s+2\nu+1)}$ is impossible. If the distance beetween $f_0$ and the alternative is smaller than $\psi_N=N^{-2s/(2s+2\nu+1)}$, the sum of the error of the two kinds is close to 1. Nevertheless, it requires the knowledge of the smoothness index $s$. That is why we want to build on a so-called adaptive test procedure which does not depend on $s$. But we prove in the next statement that we have to face a phenomenon of ``lack of adaptability'' for our problem, i.e. it is not possible to test adaptively with the same rate. Indeed, in the context where $s$ is unknown and belongs to some set $\mathcal{S}$, there is not any universal test with small error for each $s\in\mathcal{S}$. The price to pay for adaptivity is an extra factor $\sqrt{\log\log N}$ in the separation rate.

\begin{Th}\label{biadap}
 For all $s\geq 1$, let  $\psi_N^{ad}(s)=(N/\sqrt{\log\log(N)})^{-2s/(2s+2\nu+1)}$. %Let $\eta\in (0,1) $ and
Let $\mathcal{S}$ be a set such that $\mathcal{S}\cap[1,\infty)$ contains an interval. If ${\cal C}\leq K R^2$ where $K$ is a constant only depending on $d_0,d_1,\nu,\mathcal S$, then, 
$$\liminf_{N\to\infty}\inf_{\Delta_N}\left\{\P_{f_0}(\Delta_N=1)+\sup_{s\in\mathcal{S}}\sup_{f\in H_1(s,R,  {\cal C}\psi_N^{ad}(s))}\P_f(\Delta_N=0)\right\}\geq 1$$
where the infimum is taken over all test procedures $\Delta_N$  based on the observations $ Z_1,\dots, Z_N$. 
Moreover any rate faster than $\psi_N^{ad}$ will also lead to a lower-bounded error. 
\end{Th}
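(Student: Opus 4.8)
The plan is to prove the stronger statement — the same lower bound for every rate dominated by $\psi_N^{ad}$ — by a multi-resolution Bayesian construction in the spirit of \cite{Spok96}. It suffices to exhibit priors supported on the alternatives whose mixture $\bar\P$ over the observations satisfies $\chi^2(\bar\P,\P_{f_0}^{\otimes N})\to0$: then $\|\bar\P-\P_{f_0}^{\otimes N}\|_{\mathrm{TV}}\to0$, and since $\bar\P$ is a mixture of product laws $\P_f^{\otimes N}$ with $f$ ranging in $\bigcup_j H_1(s_j,R,\C\psi_N^{ad}(s_j))$ and $s_j\in\mathcal S$, the left-hand side of the claimed inequality is, for any test $\Delta_N$, at least $\P_{f_0}(\Delta_N=1)+\E_{\bar\P}(1-\Delta_N)\ge1-\|\bar\P-\P_{f_0}^{\otimes N}\|_{\mathrm{TV}}\to1$. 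Fix a closed interval $[a,b]\subseteq\mathcal S\cap[1,\infty)$ with $a<b$ and put $\tilde N=N/\sqrt{\log\log N}$. For a number $J_N$ of scales, pick $s_1<\dots<s_{J_N}$ equispaced in $[a,b]$; to each $s_j$ associate the dyadic band $B_j=\{l:L_j\le l\le2L_j\}$ and the prior $\pi_j$ under which $f=f_0+a_j\sum_{l\in B_j}\sum_{-l\le m\le l}\theta_{lm}Y^l_m$, with $(\theta_{lm})$ independent Rademacher signs constrained by~(\ref{harmonique_negative}) so that $f$ is real. Choosing $a_j,L_j$ so that $\|f-f_0\|_2^2\asymp a_j^2L_j^2=\C\psi_N^{ad}(s_j)$ and $\|f-f_0\|_{W_{s_j}}^2\asymp a_j^2L_j^{2s_j+2}\le R^2$ forces $L_j\asymp(R^2/\C)^{1/2s_j}\tilde N^{1/(2s_j+2\nu+1)}$. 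One first checks that every realisation of $\pi_j$ belongs to $H_1(s_j,R,\C\psi_N^{ad}(s_j))$: it integrates to one (the $l=0$ coefficient is untouched), it is nonnegative for $N$ large with probability tending to one since $\|f-f_0\|_\infty\lesssim\sqrt{\log N}\,\|f-f_0\|_2\to0$ (here $s\ge1$ is used), and the two norm constraints hold with a constant to spare once $\C\le KR^2$.

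The crux is the second-moment computation for $\bar\P=J_N^{-1}\sum_j\E_{\theta\sim\pi_j}[\P_f^{\otimes N}]$. Writing $g=f_\eps*(f-f_0)$, so that $f_Z-f_0=g$ and $\int_{\S^2}g=0$, the standard expansion yields $\E_{f_0}\bigl[(d\bar\P/d\P_{f_0}^{\otimes N})^2\bigr]=J_N^{-2}\sum_{j,j'}\E_{\theta,\theta'}\bigl[(1+4\pi\langle g_{\theta,j},g_{\theta',j'}\rangle_2)^N\bigr]$. For $j\ne j'$ the bands $B_j,B_{j'}$ are disjoint, hence $g_{\theta,j}\perp g_{\theta',j'}$ in $\L_2(\S^2)$ and the corresponding term equals $1$ exactly. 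For $j=j'$ I would use $(1+x)^N\le e^{Nx}$, legitimate because $1+4\pi\langle g,g'\rangle_2=4\pi\int_{\S^2}f_Zf_Z'\ge0$, followed by the Rademacher ($\cosh$) inequality; the sole model-specific input is Assumption~1: since $g_{\theta,j}^{\star l}=f_\eps^{\star l}(a_j\theta^{(l)})$ and $\|f_\eps^{\star l}\|_{op}\le d_1l^{-\nu}$, conditioning on $\theta$ and integrating over $\theta'$ gives $E_j:=\E_{f_0}[(d\bar\P_j/d\P_{f_0}^{\otimes N})^2]\le\exp\bigl(CN^2a_j^4\sum_{l\in B_j}l^{-4\nu}(2l+1)\bigr)\le\exp(C'N^2a_j^4L_j^{2-4\nu})$, the constant $C'$ depending only on $d_1,\nu$.

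It remains to substitute the calibrated values. With $a_j^2L_j^2\asymp\C\psi_N^{ad}(s_j)$ and the Sobolev-saturating $L_j$, the exponent $N^2a_j^4L_j^{2-4\nu}$ collapses to $\C^{2+\gamma_j}R^{-2\gamma_j}(N/\tilde N)^2$ with $\gamma_j=(1+2\nu)/s_j$, because the identity $\frac{2s_j}{2s_j+2\nu+1}\bigl(2+\frac{1+2\nu}{s_j}\bigr)=2$ cancels the $\tilde N$-exponent exactly, and $(N/\tilde N)^2=\log\log N$. Thus $E_j\le\exp(\kappa_j\log\log N)=(\log N)^{\kappa_j}$ with $\kappa_j\le C''\C^2(\C/R^2)^{\gamma_j}$, so $\kappa_j<1$ uniformly in $j$ once $\C\le KR^2$ for $K$ small enough (depending on $d_0,d_1,\nu,\mathcal S$). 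Consecutive bands are disjoint as soon as $L_j/L_{j+1}\gtrsim\tilde N^{c(s_{j+1}-s_j)}\ge2$, that is, once the spacing of the $s_j$ is of order $1/\log N$; this permits $J_N\asymp\log N$. Then $\chi^2(\bar\P,\P_{f_0}^{\otimes N})=J_N^{-1}\bigl(J_N^{-1}\sum_jE_j-1\bigr)\le(\log N)^{\max_j\kappa_j-1}/c\to0$, which completes the proof. The last assertion is immediate: if $\psi_N'(s)=o(\psi_N^{ad}(s))$ then $H_1(s,R,\C'\psi_N'(s))\supseteq H_1(s,R,\C\psi_N^{ad}(s))$ for $N$ large, so the bound holds a fortiori.

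The step I expect to be the main obstacle is pushing the per-scale $\chi^2$ bound through the matrix-valued deconvolution and then dovetailing it with the calibration: one must carry the Rademacher inequality past the operators $f_\eps^{\star l}$ using only $\|f_\eps^{\star l}\|_{op}\le d_1l^{-\nu}$ (the individual entries being uncontrolled), and verify that the triple $(a_j,L_j,J_N)$ can be chosen so that simultaneously the priors lie in the correct alternatives, the $J_N\asymp\log N$ frequency bands are disjoint, the $s_j$ stay in $\mathcal S$, and the exponent lands at $\kappa_j\log\log N$ with $\kappa_j<1$. This last requirement is precisely why the adaptive rate carries an \emph{iterated} logarithm $\sqrt{\log\log N}$ rather than $\sqrt{\log N}$: the $\log N$ available resolution levels can dilute a second moment polynomial in $\log N$ with exponent below one, but nothing larger.
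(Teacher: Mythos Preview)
Your argument follows the same multi-scale Bayesian scheme as the paper: choose $\asymp\log N$ smoothness values $s_j$ in an interval of $\mathcal S$, attach to each a Rademacher prior supported on a dyadic frequency band, use disjointness of the bands to make the off-diagonal terms in the $\chi^2$ equal to~$1$, and bound each diagonal term by $(\log N)^{\kappa_j}$ with $\kappa_j<1$ so that the average is killed by the $J_N\asymp\log N$ scales. The calibration and the final $\chi^2\to0$ are the same.

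The one genuine technical difference is the basis. You put Rademacher signs directly on the spherical harmonics $Y^l_m$, so that $f_Z-f_0$ has coefficients $a_j f_\eps^{\star l}\theta^{(l)}$ and the inner product entering the second moment is a \emph{matrix-weighted} Rademacher bilinear form; you then push the $\cosh$ inequality through using only $\|f_\eps^{\star l}\|_{op}\le d_1 l^{-\nu}$. The paper instead places the signs on the deconvolved functions $\varphi_{lm}$ (defined by $f_\eps*\varphi_{lm}=Y^l_m$), so that $p_\theta-f_0=\sum\theta_{lm}Y^l_m$ and $\int p_\theta p_{\theta'}=(4\pi)^{-1}+\sum\theta_{lm}\theta'_{lm}$ is a \emph{plain} Rademacher sum. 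The paper's route makes the $\chi^2$ computation cleaner but forces the Sobolev and positivity checks to go through $\|f_{\eps^{-1}}^{\star l}\|_{op}$ (hence the dependence on $d_0$); your route reverses this trade-off.

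One point to fix: your positivity step is not correct as written. The claimed bound $\|f-f_0\|_\infty\lesssim\sqrt{\log N}\,\|f-f_0\|_2$ does not hold deterministically, and a bound valid only ``with probability tending to one'' is not enough, since the prior must be supported on genuine densities for the inequality $\sup_{f\in H_1}\P_f(\Delta_N=0)\ge\P_\mu(\Delta_N=0)$ to hold. The correct deterministic bound (Cauchy--Schwarz plus $\sum_m|Y^l_m|^2=(2l+1)/(4\pi)$) is $\|f-f_0\|_\infty\lesssim a_jL_j^2$, which under your calibration is of order $L_j^{1-s_j}$ and tends to zero only for $s_j>1$. This is easily repaired by choosing the interval $[a,b]\subset\mathcal S\cap[1,\infty)$ with $a>1$; the paper instead secures positivity for all $s_j\ge1$ via a free small parameter $c_1$ decoupled from $R$.
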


\section{Upper bound for testing rate}

In order to construct an adaptive procedure of testing, we shall use the following exponential inequality. 

\begin{Lemma} \label{inegexp} There exist $K_0, K_1$ such that, for all sequence $u_N$,
$$\P_0(|T_L|>L^{2\nu+1}u_N/N)\leq K_1\exp(-K_0u_N^2)$$
provided that $u_NL^{-1}$, $LN^{-2}u_N^{8}$ and $LN^{-1}u_N^{3}$ are bounded.
\end{Lemma}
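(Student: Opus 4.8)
The plan is to recognise $T_L$, under the null, as a completely degenerate (canonical) $U$-statistic of order two and to apply the exponential inequality of Houdr\'e and Reynaud-Bouret for such statistics (see, e.g., \cite{butuceamatiaspouet09} and the references therein); the substance of the argument is then to estimate the four variance-type quantities that enter it, using Assumption~1 and the addition theorem on $\S^2$. Under $H_0$ one has $f=f_0$, hence $f_Z=f_\e*f_0=f_0$, so $Z_1,\dots,Z_N$ are i.i.d. uniform on $\S^2$. Write $T_L=\frac{2}{N(N-1)}\sum_{i_1<i_2}G_L(Z_{i_1},Z_{i_2})$ with kernel $G_L(z,z')=\sum_{l=1}^L\sum_{m=-l}^l\Phi_{lm}(z)\overline{\Phi_{lm}(z')}$, and record the vector notation $\Phi_l(z)=(\Phi_{lm}(z))_{-l\le m\le l}=f^{\star l}_{\eps^{-1}}\,\overline{Y^l(z)}$, where $\overline{Y^l(z)}=(\overline{Y^l_n(z)})_{-l\le n\le l}$. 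Since $\E_0[\Phi_{lm}(Z)]=f^{\star l}_m=0$ for every $l\ge1$, we get $\E_0[G_L(z,Z)]=\E_0[G_L(Z,z)]=0$, so the kernel is canonical. As $G_L$ is Hermitian, $T_L$ is complex-valued; splitting $T_L$ into its real and imaginary parts and using that $\mathrm{Re}\,G_L$ and $\mathrm{Im}\,G_L$ are again canonical and dominated in modulus by $|G_L|$, it suffices, up to a union bound and a harmless factor $\sqrt2$, to bound $\P_0\big(|\sum_{i_1<i_2}G_L(Z_{i_1},Z_{i_2})|>s\big)$ with $s=\tfrac{N-1}{2}L^{2\nu+1}u_N\asymp NL^{2\nu+1}u_N$.

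By the exponential inequality for canonical $U$-statistics of order two there are absolute constants $c_0,c_1>0$ with
$$\P_0\Big(\Big|\sum_{i_1<i_2}G_L(Z_{i_1},Z_{i_2})\Big|>s\Big)\le c_1\exp\Big(-c_0\min\Big\{\frac{s^2}{\mathcal A^2},\ \frac{s}{\mathcal B},\ \Big(\frac{s}{\mathcal Q}\Big)^{2/3},\ \Big(\frac{s}{\mathcal D}\Big)^{1/2}\Big\}\Big),$$
where $\mathcal A^2=\tfrac{N(N-1)}{2}\E_0|G_L(Z_1,Z_2)|^2$, $\mathcal B=N\,\|\mathcal G_L\|_{op}$ with $\mathcal G_L$ the integral operator of kernel $G_L$ on $\L_2(\S^2,f_0)$, $\mathcal Q^2=N\sup_z\E_0|G_L(z,Z_2)|^2$ and $\mathcal D=\|G_L\|_\infty$. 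To bound these, note that by the addition theorem $\sum_{m}|Y^l_m(\omega)|^2=(2l+1)/(4\pi)$ and Assumption~1, $\sum_m|\Phi_{lm}(z)|^2=\|f^{\star l}_{\eps^{-1}}\overline{Y^l(z)}\|^2\le\|f^{\star l}_{\eps^{-1}}\|_{op}^2\,\tfrac{2l+1}{4\pi}\le C\,l^{2\nu+1}$ uniformly in $z$; since moreover $\int_{\S^2}\Phi_{lm}\overline{\Phi_{l'm'}}\,dx=\delta_{ll'}\big(f^{\star l}_{\eps^{-1}}(f^{\star l}_{\eps^{-1}})^*\big)_{mm'}$,
$$\E_0|G_L(Z_1,Z_2)|^2=\frac1{(4\pi)^2}\sum_{l=1}^L\big\|f^{\star l}_{\eps^{-1}}(f^{\star l}_{\eps^{-1}})^*\big\|_{HS}^2\le C\sum_{l=1}^L l^{4\nu}(2l+1)\le C\,L^{4\nu+2}$$
(here $\|A\|_{HS}^2=\sum_{mn}|A_{mn}|^2\le(2l+1)\|A\|_{op}^2$), and the same computation gives $\sup_z\E_0|G_L(z,Z_2)|^2\le C\,L^{4\nu+2}$; block-diagonality also yields $\|\mathcal G_L\|_{op}=(4\pi)^{-1}\max_{1\le l\le L}\|f^{\star l}_{\eps^{-1}}\|_{op}^2\le C\,L^{2\nu}$, and finally $\|G_L\|_\infty\le\sum_{l=1}^L\sup_z\sum_m|\Phi_{lm}(z)|^2\le C\,L^{2\nu+2}$. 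Hence $\mathcal A\le C N L^{2\nu+1}$, $\mathcal B\le C N L^{2\nu}$, $\mathcal Q\le C\sqrt N\,L^{2\nu+1}$ and $\mathcal D\le C\,L^{2\nu+2}$.

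Plugging $s\asymp NL^{2\nu+1}u_N$ into the four terms: $s^2/\mathcal A^2\asymp u_N^2$, which produces the announced factor $\exp(-K_0u_N^2)$; $s/\mathcal B\gtrsim Lu_N$, which is $\gtrsim u_N^2$ as soon as $u_NL^{-1}$ is bounded; $(s/\mathcal Q)^{2/3}\gtrsim(\sqrt N u_N)^{2/3}$, which is $\gtrsim u_N^2$ once $u_N^4N^{-1}$ is bounded — in particular once $LN^{-2}u_N^8$ is bounded; and $(s/\mathcal D)^{1/2}\gtrsim(Nu_N/L)^{1/2}$, which is $\gtrsim u_N^2$ once $LN^{-1}u_N^3$ is bounded. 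Thus, under the three stated boundedness hypotheses, the minimum of the four terms is at least $c\,u_N^2$ for a fixed $c>0$, and the lemma follows, with $K_1$ absorbing the factor $2c_1$ from the real/imaginary union bound and $K_0=cc_0$ (up to the innocuous loss from the $\sqrt2$). The step I expect to be the genuine difficulty is the estimation of the four quantities: it needs $z$-uniform control of $G_L$ and of the operator $\mathcal G_L$ extracted from Assumption~1, and rests on precisely the two spherical facts emphasised in the introduction, namely that $\sum_m|Y^l_m(z)|^2$ does not depend on $z$ and that the deconvolved functions $\Phi_{lm}$ retain the clean block-diagonal orthogonality $\int\Phi_{lm}\overline{\Phi_{l'm'}}=\delta_{ll'}(f^{\star l}_{\eps^{-1}}(f^{\star l}_{\eps^{-1}})^*)_{mm'}$; once these bounds are in hand, matching them to the threshold to see which boundedness hypothesis neutralises which regime is routine bookkeeping.
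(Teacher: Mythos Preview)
Your proof is correct and follows essentially the same route as the paper: recognise $T_L$ under $H_0$ as a degenerate order-two $U$-statistic, split into real and imaginary parts, apply the exponential inequality for canonical $U$-statistics (the paper cites Gin\'e--Lata{\l}a--Zinn, you cite Houdr\'e--Reynaud-Bouret; the four controlling quantities are the same), and estimate $A,B,C,D$ (your $\mathcal D,\mathcal Q,\mathcal A,\mathcal B$) via Assumption~1 together with $\sum_m|Y^l_m|^2=(2l+1)/(4\pi)$.

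Two small differences worth noting. First, your bound $\sup_z\E_0|G_L(z,Z_2)|^2\le C L^{4\nu+2}$ is sharper than the paper's $L^{4\nu+5/2}$, obtained because you exploit the exact block structure $\int|G_L(z,\cdot)|^2=\sum_l\|(f^{\star l}_{\eps^{-1}})^*\Phi_l(z)\|^2$ rather than a Cauchy--Schwarz step; this makes the third regime require only $u_N^4/N$ bounded, which, as you observe, is implied by the stated hypothesis $LN^{-2}u_N^8=O(1)$. Second, you identify the $D$-term directly as the operator norm of the integral operator and read it off from block-diagonality, whereas the paper bounds $\E_{f_0}[u(Z_1,Z_2)u_1(Z_1)u_2(Z_2)]$ by hand; both yield $D\le CNL^{2\nu}$. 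Neither difference changes the statement or its use downstream.
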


Actually the term $L^{2\nu+1}/N$  is the order of the variance of $T_L$ under $H_0$.
We denote $\lceil x\rceil$ the smallest integer larger than or equal to $x.$

\begin{Th}\label{bs}
 Assume $s\geq 1$ and $\psi_N^{ad}=(N/\sqrt{\log\log N})^{-2s/(2s+2\nu+1)}$.  We consider the set 
 $\mathcal{L}=\{2^{j_0},\dots, 2^{j_m}\}$ where 
 $j_0=\lceil \log_2(\log\log N)\rceil$, $j_m=\lceil \log_2(N(\log\log N)^{-3/2})\rceil$ and the adaptive test statistic
 $$D_N=\1_{\{\max_{L\in\mathcal{L}} (|T_L|/t_L^2)>\sqrt{2/K_0}\}}$$
 with $t_L^2=L^{2\nu+1}\sqrt{\log\log N}/N.$ %Let $0<\eta<1$.
Then, if $\C>\sqrt{2K_0^{-1}}+((4\pi)^{-1}+R^2)2^{2s}$,
$$\lim_{N\to\infty}\left\{\P_{f_0}(D_N=1)+\sup_{f\in H_1(s,R,\C\psi_N^{ad})}\P_f(D_N=0)\right\}=0.$$
\end{Th}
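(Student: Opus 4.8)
The plan is to control the two error terms separately. For the first-kind error $\P_{f_0}(D_N=1)$, I would use a union bound over $\mathcal{L}$ together with the exponential inequality of Lemma \ref{inegexp}. Writing $D_N=1$ as $\{\exists L\in\mathcal L:\ |T_L|>\sqrt{2/K_0}\,t_L^2\}$ with $t_L^2=L^{2\nu+1}\sqrt{\log\log N}/N$, I set $u_N=\sqrt{2K_0^{-1}\log\log N}$ so that $L^{2\nu+1}u_N/N=\sqrt{2/K_0}\,t_L^2$; then one has to check that for every $L\in\mathcal L$ the three conditions of Lemma \ref{inegexp} ($u_NL^{-1}$, $LN^{-2}u_N^8$, $LN^{-1}u_N^3$ bounded) hold, which is exactly why the grid is truncated at $j_0=\lceil\log_2(\log\log N)\rceil$ (giving $u_N L^{-1}\lesssim 1$) and at $j_m=\lceil\log_2(N(\log\log N)^{-3/2})\rceil$ (giving $LN^{-1}u_N^3\lesssim1$ and, a fortiori, $LN^{-2}u_N^8\lesssim1$). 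Lemma \ref{inegexp} then yields $\P_{f_0}(|T_L|>\sqrt{2/K_0}\,t_L^2)\le K_1\exp(-K_0u_N^2)=K_1(\log N)^{-2}$, and summing over the $|\mathcal L|=j_m-j_0+1=O(\log N)$ values of $L$ gives a bound of order $(\log N)^{-1}\to 0$.

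For the second-kind error I would fix $f\in H_1(s,R,\C\psi_N^{ad})$ and choose a single good resolution level $L^*\in\mathcal L$, namely the element of $\mathcal L$ closest (from above) to $N^{2/(2s+2\nu+1)}(\log\log N)^{-1/(2s+2\nu+1)}$, so that $t_{L^*}^2\asymp\psi_N^{ad}$ up to constants; this $L^*$ lies in the grid precisely because $\mathcal S\cap[1,\infty)$ contains an interval and $s\ge1$, so the required exponent stays between the endpoints $j_0$ and $j_m$ for $N$ large. Then $\P_f(D_N=0)\le\P_f(|T_{L^*}|\le\sqrt{2/K_0}\,t_{L^*}^2)$. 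I would use the bias decomposition $\E_f(T_{L^*})=\sum_{l=1}^{L^*}\sum_m|f_m^{\star l}|^2=\|f-f_0\|_2^2-\sum_{l>L^*}\sum_m|f_m^{\star l}|^2$, and the Sobolev condition \eqref{Sobolev} to bound the tail $\sum_{l>L^*}\sum_m|f_m^{\star l}|^2\le (1+L^*(L^*+1))^{-s}\|f\|_{W_s}^2\le ((4\pi)^{-1}+R^2)(L^*)^{-2s}$. Combining with the separation hypothesis $\|f-f_0\|_2^2\ge\C\psi_N^{ad}$ and the calibration $t_{L^*}^2\asymp\psi_N^{ad}$, the condition $\C>\sqrt{2K_0^{-1}}+((4\pi)^{-1}+R^2)2^{2s}$ makes $\E_f(T_{L^*})$ exceed $2\sqrt{2/K_0}\,t_{L^*}^2$ for $N$ large; a Chebyshev argument $\P_f(|T_{L^*}|\le\sqrt{2/K_0}\,t_{L^*}^2)\le \var_f(T_{L^*})/(\E_f(T_{L^*})-\sqrt{2/K_0}\,t_{L^*}^2)^2$ then finishes the proof, provided $\var_f(T_{L^*})=o((\psi_N^{ad})^2)$.

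The main obstacle is the variance bound $\var_f(T_{L^*})$ under the alternative $f$ (not just under $H_0$, which is what Lemma \ref{inegexp} covers): one has to expand the U-statistic $T_L$ into its degenerate and non-degenerate parts and show that each contributes at a negligible order compared to $(\psi_N^{ad})^2$. The degenerate (variance-type) term is controlled by the same operator bounds on $f^{\star l}_{\eps^{-1}}$ from Assumption 1 that give the $L^{2\nu+1}/N$ order under $H_0$, up to factors involving $\|f\|_\infty$ or $\|f\|_2$; the linear term is controlled by $\|f-f_0\|_2^2$ times a factor $L^{2\nu+1}/N$, which is $o((\psi_N^{ad})^2)$ by the choice of $L^*$. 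I expect this variance computation — keeping track of the Fourier/operator bounds through the spherical-harmonic addition formula — to be the technically heavy step, mirroring the proof of Lemma \ref{inegexp} but now with $f\ne f_0$; everything else is bookkeeping with the grid $\mathcal L$ and the Sobolev tail.
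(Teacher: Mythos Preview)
Your approach matches the paper's almost exactly: union bound plus Lemma~\ref{inegexp} for the first-kind error (with $u_N=\sqrt{2K_0^{-1}\log\log N}$ and $|\mathcal L|=O(\log N)$), then an oracle choice $L^*\in\mathcal L$, the Sobolev tail bound on the bias, and Chebyshev using a variance lemma for $T_{L^*}$ under $f$. Two small slips: the correct oracle level is $L^*\asymp(N/\sqrt{\log\log N})^{1/(2s+2\nu+1)}$ (your exponent $2/(2s+2\nu+1)$ on $N$ is off), and the reference to ``$\mathcal S\cap[1,\infty)$ contains an interval'' is extraneous here---that hypothesis belongs to the adaptive lower bound (Theorem~\ref{biadap}); Theorem~\ref{bs} is for a single fixed $s\ge 1$.

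One substantive correction: the sufficient condition you state, $\var_f(T_{L^*})=o((\psi_N^{ad})^2)$, is not what holds and not what you need. The paper's variance bound (their Lemma~\ref{var}) contains a term $\|f-f_0\|_2^4/N$, which is not $o((\psi_N^{ad})^2)$ uniformly over $H_1$ since $\|f-f_0\|_2$ may be of constant order. What the Chebyshev inequality you wrote actually requires is $\var_f(T_{L^*})/\|f-f_0\|_2^4\to 0$, because the denominator $(\E_f(T_{L^*})-\sqrt{2/K_0}\,t_{L^*}^2)^2$ is of order $\|f-f_0\|_2^4$, not $(\psi_N^{ad})^2$. The paper's lemma gives five terms, each $o(\|f-f_0\|_2^4)$ after division, and the argument uses only $\|f-f_0\|_2$ and the operator bounds from Assumption~1---no $\|f\|_\infty$ is needed (relevant since $W_s$ with $s=1$ does not embed into $L^\infty$ on the $2$-sphere).
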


This result shows that our procedure achieves the minimax rate of testing, and the limiting distribution of the asymptotically minimax test statistic is degenerate.

Note that the direct case (without noise) is included in this result, taking $\varepsilon=Id$, $f^{\star l}_\varepsilon=Id$, $\nu=0$. In this case, the separation rate is $(N/\sqrt{\log\log N})^{-2s/(2s+1)}$. To our knowledge, even in this simpler case, this result was not established yet.

\section{Super smooth noise}

In this section, we deal with the case of a super smooth noise. This kind of noise is of interest since it includes the Gaussian distribution. 
We will say that the distribution of $\varepsilon$ is super smooth of order $\nu$ if the rotational Fourier transform of $f_\eps$ satisfies
\begin{Hyp} 
For all $l\geq 0$, the matrix $f_\eps^{\star l}$ is invertible and there exist reals $\nu_1\leq\nu_0$,  and positive constants $d_0, d_1, \delta,\beta$ such that 
$$\| f^{\star l}_{\varepsilon^{-1}}\|_{op}\leq d_0^{-1}l^{-\nu_0}\exp(l^\beta/\delta)\quad
\text{ and }\quad\| f^{\star l}_{\varepsilon}\|_{op}\leq d_1l^{\nu_1}\exp(-l^\beta/\delta).$$
 \end{Hyp}

In this case, we present a similar test statistic but with a different threshold $t_L$. Moreover it is sufficient to consider only one $L^*$ instead of a maximum.

\begin{Th}\label{bs2} 
 Let $\psi_N=(\log N)^{-2s/\beta}$ and $K_0>0$. We consider $L^*=\left\lfloor\left({\delta}\log(N)/8 \right)^{1/\beta}\right\rfloor$ and  the test statistic
 $$D_N=\1_{\{ |T_{L^*}|/t_{L^*}^2>K_0\}}$$
 with $t_L^2=L^{-2\nu_0+1}\exp(2L^\beta/\delta)/N.$ %Let $0<\eta<1$.
Then, if $\C>K_0+((4\pi)^{-1}+R^2)(\delta/16)^{-2s/\beta}$,
$$\lim_{N\to\infty}\left\{\P_{f_0}(D_N=1)+\sup_{f\in H_1(s,R,\C\psi_n)}\P_f(D_N=0)\right\}=0.$$
\end{Th}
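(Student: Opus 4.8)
The plan is to follow the blueprint of the proof of Theorem~\ref{bs}, which becomes substantially simpler in the super smooth regime: since only the single resolution $L^*$ is used, no union bound over a family of levels is needed, and an ordinary Chebyshev inequality will replace the exponential inequality of Lemma~\ref{inegexp}. First I would compute the mean of the $U$-statistic exactly: because $\E_f[\Phi_{lm}(Z)]=f^{\star l}_m$ and the $Z_i$ are i.i.d., one gets $\E_f[T_{L^*}]=\sum_{l=1}^{L^*}\sum_{m=-l}^{l}|f^{\star l}_m|^2$, which is $0$ under $H_0$. Under $H_1$ I would write $\E_f[T_{L^*}]=\|f-f_0\|_2^2-\sum_{l>L^*}\sum_m|f^{\star l}_m|^2$ and control the tail via the Sobolev ball: $\sum_{l>L^*}\sum_m|f^{\star l}_m|^2\le(1+L^*(L^*+1))^{-s}\|f\|_{W_s}^2\le(L^*)^{-2s}\bigl((4\pi)^{-1}+R^2\bigr)$. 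For $L^*=\lfloor(\delta\log N/8)^{1/\beta}\rfloor$ one has, for $N$ large enough, $(L^*)^{-2s}\le(\delta/16)^{-2s/\beta}(\log N)^{-2s/\beta}$ for every $\beta>0$ (the constant $16$ instead of $8$ leaves exactly the slack absorbing the integer part), so that $\E_f[T_{L^*}]\ge\bigl(\C-((4\pi)^{-1}+R^2)(\delta/16)^{-2s/\beta}\bigr)\psi_N=:c_1\psi_N$ with $c_1>K_0$ by the hypothesis on $\C$.

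The technical heart of the proof is an upper bound on $\var_f(T_{L^*})$, uniform over $\{f_0\}\cup H_1(s,R,\C\psi_N)$. I would use the Hoeffding decomposition $T_{L^*}=\E_f[T_{L^*}]+L_N+G_N$ into its (uncorrelated) linear and completely degenerate parts, so that $\var_f(T_{L^*})=\var_f(L_N)+\var_f(G_N)$, and write $\Phi_{lm}=f^{\star l}_m+\Psi_{lm}$ with $\E_f[\Psi_{lm}(Z)]=0$. Three ingredients then do the job: the bound $\|f^{\star l}_{\eps^{-1}}\|_{op}\le d_0^{-1}l^{-\nu_0}e^{l^\beta/\delta}$ of Assumption~2; the spherical addition theorem $\sum_{n=-l}^{l}|Y^l_n(x)|^2=(2l+1)/(4\pi)$, which yields both $\sum_m|\Phi_{lm}(x)|^2\le\|f^{\star l}_{\eps^{-1}}\|_{op}^2(2l+1)/(4\pi)$ and $\sum_m\int_{\S^2}|\Phi_{lm}|^2=\sum_{m,n}|(f^{\star l}_{\eps^{-1}})_{mn}|^2\le(2l+1)\|f^{\star l}_{\eps^{-1}}\|_{op}^2$; and the fact that $f_Z=f_\e*f$ is bounded, uniformly over $f\in W_s(\S^2,R)$, because $f_\e$ is super smooth. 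This leads to $\var_f(G_N)\le C\,N^{-2}\bigl(\sum_{l\le L^*}(2l+1)\|f^{\star l}_{\eps^{-1}}\|_{op}^2\bigr)^2$ and $\var_f(L_N)\le C\,N^{-1}\|f-f_0\|_2^2\sum_{l\le L^*}(2l+1)\|f^{\star l}_{\eps^{-1}}\|_{op}^2$; moreover, under $H_0$ the functions $\Psi_{lm}=\Phi_{lm}$ are pairwise orthogonal in $\L_2(f_0)$ across $(l,m)$, which gives the sharper bound $\var_{f_0}(G_N)\le C\,N^{-2}\sum_{l\le L^*}(2l+1)\|f^{\star l}_{\eps^{-1}}\|_{op}^4$. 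Inserting Assumption~2 and the value of $L^*$, for which $e^{2(L^*)^\beta/\delta}\le N^{1/4}$ and $e^{4(L^*)^\beta/\delta}\le N^{1/2}$, all these quantities are $O(N^{-3/2}(\log N)^{a})$ or $O(N^{-3/4}(\log N)^{a'})$ for suitable powers $a,a'$, hence $o(\psi_N^2)$ since $\psi_N$ is only logarithmically small.

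It then remains to assemble the pieces with two applications of Chebyshev's inequality. For the first-kind error, $\P_{f_0}(D_N=1)=\P_{f_0}(|T_{L^*}|>K_0 t_{L^*}^2)\le\var_{f_0}(T_{L^*})/(K_0 t_{L^*}^2)^2$; since $t_{L^*}^2$ has exactly the order $N^{-1}(L^*)^{1-2\nu_0}e^{2(L^*)^\beta/\delta}$, which is a factor $(L^*)^{\beta/2}$ larger than the standard deviation of $T_{L^*}$ under $H_0$, this ratio is $O\bigl((L^*)^{-\beta}/K_0^2\bigr)\to0$. For the second-kind error, $t_{L^*}^2\le(L^*)^{1-2\nu_0}N^{-3/4}=o(\psi_N)$, so $c_1\psi_N-K_0 t_{L^*}^2\ge\tfrac12 c_1\psi_N$ for $N$ large; hence for every $f\in H_1(s,R,\C\psi_N)$,
$$\P_f(D_N=0)=\P_f(|T_{L^*}|\le K_0 t_{L^*}^2)\le\P_f\bigl(|T_{L^*}-\E_f T_{L^*}|\ge c_1\psi_N-K_0 t_{L^*}^2\bigr)\le\frac{4\var_f(T_{L^*})}{c_1^2\psi_N^2}\longrightarrow0,$$
uniformly in $f$ by the uniformity of the variance bound. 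Summing the two bounds yields the claim. The main obstacle is the variance estimate of the previous paragraph: one must handle simultaneously the operator norms of Assumption~2, the spherical addition theorem and the $U$-statistic decomposition, and then check that the resulting powers of $L^*$ do no harm — which works precisely because the choice of $L^*$ forces the exponential factors $e^{c(L^*)^\beta/\delta}$ to be powers of $N$ with exponent strictly below~$1$, while the separation rate $\psi_N$ is only polylogarithmic.
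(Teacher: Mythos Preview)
Your proof is correct and follows the same skeleton as the paper's: both errors are controlled by Chebyshev's inequality, the mean of $T_{L^*}$ is computed exactly, the bias is bounded via the Sobolev condition giving $(L^*)^{-2s}\le(\delta/16)^{-2s/\beta}\psi_N$, and the threshold satisfies $t_{L^*}^2=o(\psi_N)$.

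The only substantive difference is in how the variance of $T_{L^*}$ is estimated. The paper adapts the direct expansion of Lemma~\ref{var} term by term to Assumption~2, obtaining five terms with explicit powers of $\|f-f_0\|_2$, and then divides by $\|f-f_0\|_2^4$ (using $\|f-f_0\|_2^2\ge C_3L^{*-2s}$). You instead use the Hoeffding decomposition together with the observation that $\|f_Z\|_\infty$ is bounded uniformly over $W_s(\S^2,R)$ --- a consequence of the super-smooth decay of $\|f_\eps^{\star l}\|_{op}$ that the paper does not invoke --- which yields cruder but uniform bounds on $\var_f(L_N)$ and $\var_f(G_N)$ and allows you to divide directly by $\psi_N^2$. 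Your route is a bit shorter and exploits the super-smooth hypothesis more directly; the paper's route has the advantage of being a verbatim adaptation of the ordinary-smooth computation (Lemma~\ref{var}), so it reuses work already done. Both arrive at the same first-kind bound $O(L^{*-\beta})$ and a second-kind bound that is $O(N^{-c}(\log N)^a)=o(1)$.
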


We observe that in this case the separation rate is very slow $\psi_N=(\log N)^{-2s/\beta}$. However, this rate is reached without any knowledge on the smoothness of $f$. Moreover, we prove that this is the optimal rate:

\begin{Th}\label{bi2}
  Let  $s\geq 1/2$ and $\psi_N=(\log N)^{-2s/\beta}$. %Let $\eta\in (0,1) $. 
If ${\cal C}\leq K R^2$ where $K$ is a constant only depending on $d_0,d_1,\nu_0,\beta,\delta, s$, then 
$$\liminf_{N\to\infty}\inf_{\Delta_N}\left\{\P_{f_0}(\Delta_N=1)+\sup_{f\in H_1(s,R,{\cal C}\psi_N(s))}\P_f(\Delta_N=0)\right\}\geq  1$$
where the infimum is taken over all test procedures $\Delta_N$  based on the observations 
$ Z_1,\dots, Z_N$. 
\end{Th}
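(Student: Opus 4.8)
\emph{Proof strategy.} The plan is to reduce the minimax testing lower bound to a Bayesian one and to bound the total error from below by a $\chi^2$ distance between the null law and a mixture over the alternative. For any prior $\pi$ carried by $H_1(s,R,\C\psi_N)$ and any test $\Delta_N$, writing $\P_\pi=\int\P_f^{\otimes N}\,d\pi(f)$,
$$\P_{f_0}(\Delta_N=1)+\sup_{f\in H_1(s,R,\C\psi_N)}\P_f(\Delta_N=0)\ \geq\ \P_{f_0}(\Delta_N=1)+\P_\pi(\Delta_N=0)\ \geq\ 1-\tfrac12\Big(\E_{f_0}\big[(d\P_\pi/d\P_{f_0}^{\otimes N})^2\big]-1\Big)^{1/2}.$$
Hence it suffices to build a prior supported in $H_1(s,R,\C\psi_N)$ whose associated $\chi^2$ divergence tends to $0$; this yields the constant $1$ in the $\liminf$.

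For the prior I would concentrate all the mass at a single frequency $l=l_N=\lfloor(a\log N)^{1/\beta}\rfloor$, with a numerical constant $a>\delta/2$ to be fixed. Let $\{\phi_m\}_{-l\le m\le l}$ be a real orthonormal basis of $\mathbb{H}_l$, let $(\theta_m)$ be i.i.d.\ Rademacher signs, and set $f_\theta=f_0+\gamma_N\sum_{m=-l}^l\theta_m\phi_m$ with $\gamma_N^2(2l+1)=\C\psi_N$. Each $f_\theta$ is real and integrates to $1$ (the $\phi_m$ are orthogonal to the constants); by the addition theorem $\sum_m|\phi_m(x)|^2=(2l+1)/(4\pi)$, so $\|f_\theta-f_0\|_\infty\lesssim\gamma_N(2l+1)\asymp\C^{1/2}(\log N)^{(1/2-s)/\beta}$, which stays bounded and is $\le(4\pi)^{-1}$ for $N$ large precisely because $s\ge1/2$; hence $f_\theta$ is a genuine density for $N$ large. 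Moreover $\|f_\theta-f_0\|_2^2=\gamma_N^2(2l+1)=\C\psi_N$, so the separation is met, and $\|f_\theta-f_0\|_{W_s}^2=(1+l(l+1))^s\C\psi_N\asymp\C\,a^{2s/\beta}$, so $f_\theta\in W_s(\S^2,R)$ as soon as $\C\le KR^2$ with $K\asymp a^{-2s/\beta}$.

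Finally I would estimate the $\chi^2$ term. Since $f_\e*f_0=f_0$, $f_0\equiv(4\pi)^{-1}$, and $g_\theta:=f_\theta-f_0$ carries no degree-zero component, a short computation (valid for $N$ large, where $|4\pi\langle f_\e*g_\theta,f_\e*g_{\theta'}\rangle|<1$) gives
$$\E_{f_0}\big[(d\P_\pi/d\P_{f_0}^{\otimes N})^2\big]=\E_{\theta,\theta'}\big[(1+4\pi\langle f_\e*g_\theta,f_\e*g_{\theta'}\rangle)^N\big]\le\E_{\theta,\theta'}\big[\exp(4\pi N\langle f_\e*g_\theta,f_\e*g_{\theta'}\rangle)\big].$$
Writing $\langle f_\e*g_\theta,f_\e*g_{\theta'}\rangle=\gamma_N^2\,\theta^\top B\theta'$, where $B\succeq0$ is the Gram matrix of the $f_\e*\phi_m$ in $\L_2$ and $\|B\|_{op}\le\|f_\e^{\star l}\|_{op}^2\le d_1^2 l^{2\nu_0}\exp(-2l^\beta/\delta)$ (using $\nu_1\le\nu_0$), I integrate over $\theta'$ via $\cosh x\le e^{x^2/2}$ and then over $\theta$ via $\theta^\top B^2\theta\le\|B\|_{op}^2(2l+1)$, which bounds the right-hand side by $\exp\big(c\,(N\gamma_N^2)^2\|B\|_{op}^2(2l+1)\big)$ for a numerical $c$. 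Since $N\gamma_N^2\asymp N\C\psi_N/l$ and $\|B\|_{op}\lesssim l^{2\nu_0}N^{-2a/\delta}$ up to sub-polynomial factors, the exponent is of order $\C^2 N^{2-4a/\delta}(\log N)^p$ for a fixed power $p$, and tends to $0$ because $a>\delta/2$. With such an $a$, the constant $K\asymp a^{-2s/\beta}$ can be taken to depend only on $d_0,d_1,\nu_0,\beta,\delta,s$, and the reduction above concludes.

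The hard part is the simultaneous balancing of the three requirements under the brutal noise damping $\exp(-2l^\beta/\delta)$: the frequency $l_N$ cannot grow faster than $(\log N)^{1/\beta}$ if the $\chi^2$ term is to die (this is the condition $a>\delta/2$), yet precisely this ceiling on $l_N$ is what, through the Sobolev constraint, produces the logarithmic rate $(\log N)^{-2s/\beta}$ and pins down the admissible range $\C\le KR^2$ with an explicit $K$. The only genuinely computational step is the Rademacher/Gram-matrix bookkeeping made necessary by the fact that $f_\e^{\star l}$ is a $(2l+1)\times(2l+1)$ matrix rather than a scalar, which is absorbed by passing to operator norms as above; and the nonnegativity check on $f_\theta$ is what forces the restriction $s\ge 1/2$.
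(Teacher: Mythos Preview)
Your proof is correct and uses the same overall machinery as the paper (a single-frequency Rademacher prior combined with a $\chi^2$ reduction that vanishes in the super-smooth regime, which is precisely why the $\liminf$ equals $1$ rather than merely some $\eta<1$). The one genuine difference is the choice of perturbation basis. The paper perturbs $f_0$ by the \emph{deconvolved} functions $\varphi_{Lm}$ defined by $f_\varepsilon*\varphi_{Lm}=Y^L_m$, so that the observed density $p_\theta=f_\varepsilon*f_\theta$ has exactly orthonormal increments $\sum_m\theta_{Lm}Y^L_m$; this trivializes the $\chi^2$ computation (no Gram matrix appears) but pushes the work into the Sobolev, positivity and separation checks, which then hinge on $\|f^{\star L}_{\varepsilon^{-1}}\|_{op}$ and hence on $d_0$. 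You perturb instead by the harmonics $\phi_m$ themselves, which trivializes the Sobolev, positivity and separation checks but forces the $\chi^2$ step through the Gram matrix $B=(f_\varepsilon^{\star l})^*f_\varepsilon^{\star l}$ and hence through $d_1$. The two constructions are exact duals of one another; neither is more elementary, and both give the same rate with the same constraint $a>\delta/2$ on the frequency scale (the paper takes $L=\lfloor(2\delta\log N)^{1/\beta}\rfloor$, i.e.\ $a=2\delta$). Your Rademacher/operator-norm bookkeeping is handled correctly, and your observation that the positivity check is what forces $s\ge 1/2$ is exactly the mechanism at work in the paper's construction as well.
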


The deterioration of the rate in the case of a super smooth noise is a well-known phenomenon in convolution models (see e.g. \cite{fan91}).

\section{Numerical illustrations}\label{simus}

In this section, we highlight the numerical performances of our test procedure and compare them with other well-known directional test procedures. We both deal with simulations and real data in astrophysics and paleomagnetism. 

\subsection{The testing procedures}
We shall now explain the various directional testing procedures we consider in this numerical section. 

Let us start with our adaptive testing procedure that will be denoted by SHT (as Spherical Harmonics Test). The test is described in Theorems~\ref{bs}  and \ref{bs2}. For the quantile $K_{0}$, we generate 1000 times $N$ observations uniformly under $H_0$. 
Then, we compute by 1000 Monte Carlo runs the $5\%$ quantile of the statistics $\max_{L \in \mathcal{L}}(|T_L|/t^2_L)$ defined in the theorems. In the ordinary smooth case, we use $j_{m}=\lceil\log_{2}(  N^{1/3}(\log\log N)^{-3/2}  )\rceil$. Indeed, Theorem~\ref{bs} is valid for a large class of $j_m$ (see the proofs) and for the implementation we choose a small one for the sake of efficiency.
We point out that our numerical procedure is notably fast all the more so as we are in dimension 2. Furthermore, we do not have any tuning parameter. 

To compare our results, we have implemented two other procedures. 

The first one is called \textit{the Nearest Neighbour test} and was proposed by \cite{QuashnockLamb93}. It will be denoted NN in the sequel.  For each observation $Z_i$, one must compute the distance $Y_i$ to its nearest neighbour. The Wilcoxon test statistic is
 $$W=\sqrt{12N}\left( \frac 1 2 -\frac 1 N \sum_{i=1}^{N} \phi(Y_i) \right ),$$
 where $\phi(z)= 1-[(1+\cos z)/2]^{N-1}$. The distribution of $W$ is asymptotically standard Gaussian. Notice that this test was designed for non-noisy data.
 
The second procedure was introduced by \cite{Beran68} and \cite{Gine75}. The test statistic is 
$$ F_{N} = \frac{3N}2-\frac4{N\pi}\sum_{i=1}^{N-1}\sum_{j=i+1}^{N}d(Z_{i},Z_{j})+\sin(d(Z_{i},Z_{j}))$$ 
where $d(Z_{i},Z_{j})=\arccos\langle Z_{i}, Z_{j}\rangle$ is the spherical distance between $Z_{i}$ and $Z_{j}$, and the quantiles are computed via simulations under $H_{0}$.
Again, this test was designed for non-noisy data.

In \cite{FDKP12}, the authors implement two procedures called Multiple and PlugIn for the noise free case. These procedures are both based on needlets which can be seen as the wavelets on the sphere. The Multiple test is based on a family of linear estimators of the density $f_Z$ whereas the PlugIn test  considers a hard thresholding procedure on needlets.

\subsection{Generating the noise}

In our theoretical statements, we talked about ordinary and super smooth noises on the group SO(3), but what does it mean in practice? In fact, there exist concrete examples of random matrices which could be generated according to densities which meet those smoothness assumptions. We will particularly highlight two cases, the Rotational Laplace and the Gaussian densities on SO(3) (for further details see  \cite{kimkoo02}). To the best of our knowledge, they have never been implemented in practice.  The first one is an ordinary smooth density and the second one a super smooth one. 

As explained in Section 3, the noise smoothness can be characterized by the decay of its rotational Fourier transform. 
The Rotational Laplace distribution is the rotational analogue of the well-known Euclidean Laplace distribution (known also as double exponential distribution). It has been discussed in depth in \cite{HealyHendriksKim}. Its expanded form in terms of rotational harmonics is the following
\begin{equation}  \label{laplace} f_\varepsilon= \sum_{l\geq 0} \sum_{m=-l}^l (1+ \sigma^2l(l+1))^{-1} (2l+1)\overline{D^l_{mm}},  \end{equation} 
for some $\sigma^2>0$ which is a variance parameter. Hence we have
$$ (f^{\star l}_{\varepsilon})_{ mn} = (1+\sigma^2l(l+1))^{-1}\delta_{mn},$$
for $l=0,1,\dots$ and where $\delta_{mn}=1$ if $m=n$ and is $0$ otherwise. The Laplace distribution is ordinary smooth with a smoothness index $\nu=2$. 

Let us present now the Gaussian distribution. The distribution can be written as follows (see \cite{kimkoo02}) 
\begin{equation}  \label{gaussian}  f_\varepsilon = \sum_{l\geq 0} \sum_{m=-l}^l \exp(-\sigma^2l(l+1)/2)(2l+1) \overline{D^l_{mm}},  \end{equation}
for $\sigma>0$.
%We have
%$$  (f^{\star l}_{\varepsilon})_{mn} = \exp(-\sigma^2l(l+1)/2) \delta_{mn}.$$
This is an example of a super smooth distribution with $\delta=2/\sigma^2$ and $\beta=2$ following the terminology in Section 6. 

We would also like to make a remark about how to generate random matrices according to the Laplace or the Gaussian distribution. After rewriting carefully their density expressions in terms of rotational harmonics given by (\ref{laplace}) and (\ref{gaussian}), it turned out that $f_\varepsilon(u)$ only depends on the 
angle of the rotation $u$, say $\theta$. Then the simulation of a rotation following $f_\varepsilon$ amounts to pick at random an axis and perform a rotation about this axis by an angle following the law $f_\varepsilon(\theta)(1-\cos(\theta))/\pi$.

\subsection{Alternatives}
We have investigated the performances of the various testing procedures described above for two kind of alternatives. These alternatives aim at describing different relevant scenarios in practice. 

The first family of alternatives is non isotropic, unimodal with a Gaussian shape. More precisely, it is a mixture of a Gaussian-like density with the uniform density $f_0$.  We will denote this alternative by $H_{1}^a$. The $H_{1}^a$ density has the following form
$$
f(x)=(1-\delta)f_0+\delta h_{\gamma}(x),
$$
where  $h_{\gamma}(x):=C_\gamma \exp(-{d(x, x_0)^2}/({2\gamma^2}))$, $d$ is the spherical distance,
%$x$ and $x_0$ are expressed in spherical coordinates and  
$C_\gamma$ is a normalization constant such that $\int_{\mathbb{S}^2}f(x)dx=1$ and $x_0$ is $(\pi/ 2, 0)$ in spherical coordinates. In the sequel, we chose $\delta=0.08$ and $\gamma=5\pi/180$ i.e. $\gamma=5^{\textrm{o}}$.  Remark that with this choice of parameters, the dose of uniformness injected in $H_1^a$ is  high and complicates the detection of the alternative from the null hypothesis. \\
This density is particularly meaningful in the field of astrophysics since very often one seeks for some departure from isotropy and some principal direction. As \cite{FDKP12} also considered $H_1^a$, this will permit us to compare the performances of our test to the Multiple and PlugIn procedures of \cite{FDKP12}. 
Figure~\ref{visuH1a} allows to visualize this alternative. The density is represented in spherical coordinates as a surface $z=f(\theta,\phi)$. To visualize points on the sphere, we use Hammer projection, because of its equal-area property. 
 \newline \\

\begin{figure}[!h]\begin{center}
\begin{tabular}{ccc}
\includegraphics[scale=0.25]{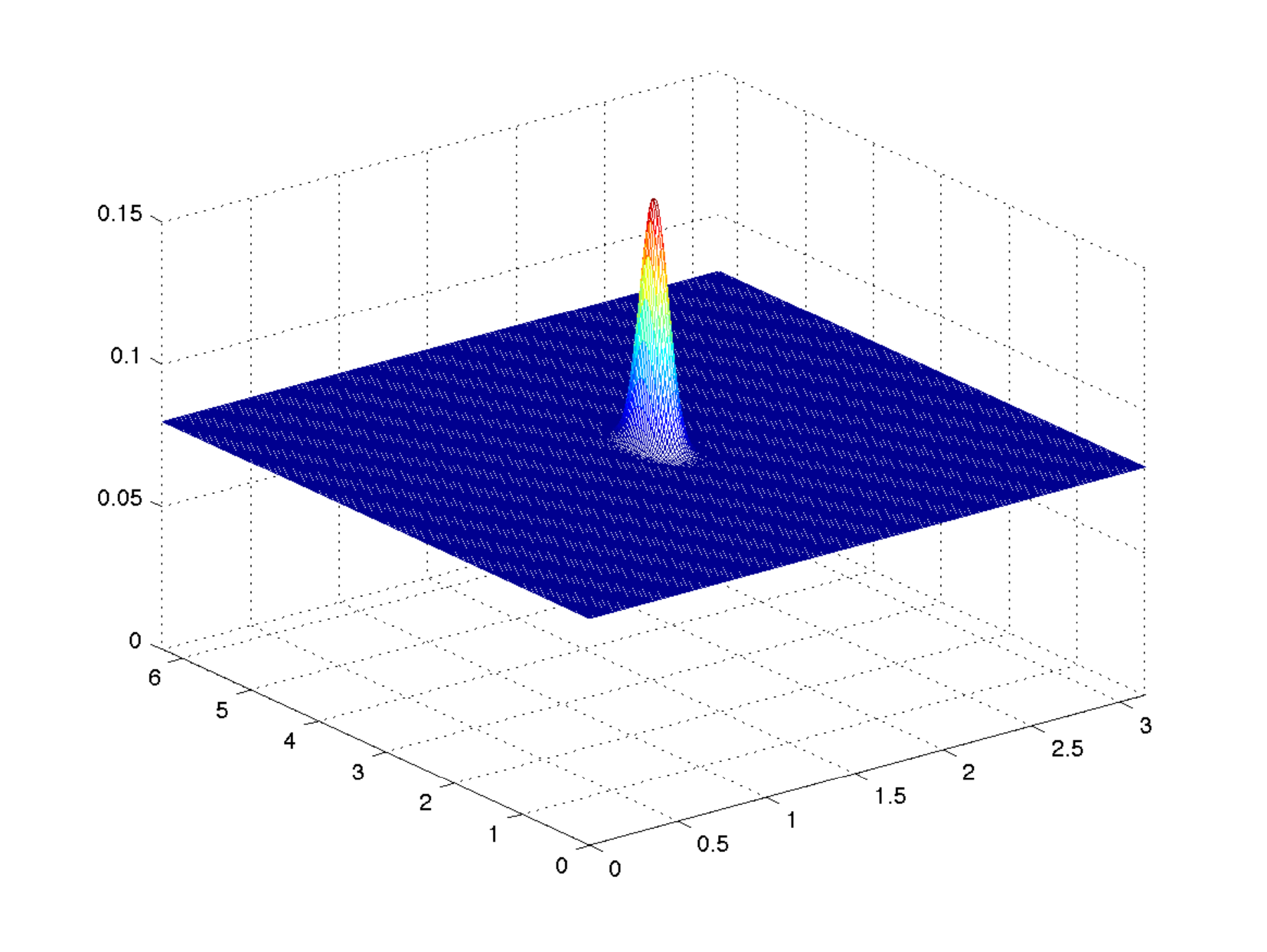}&
\includegraphics[scale=0.3]{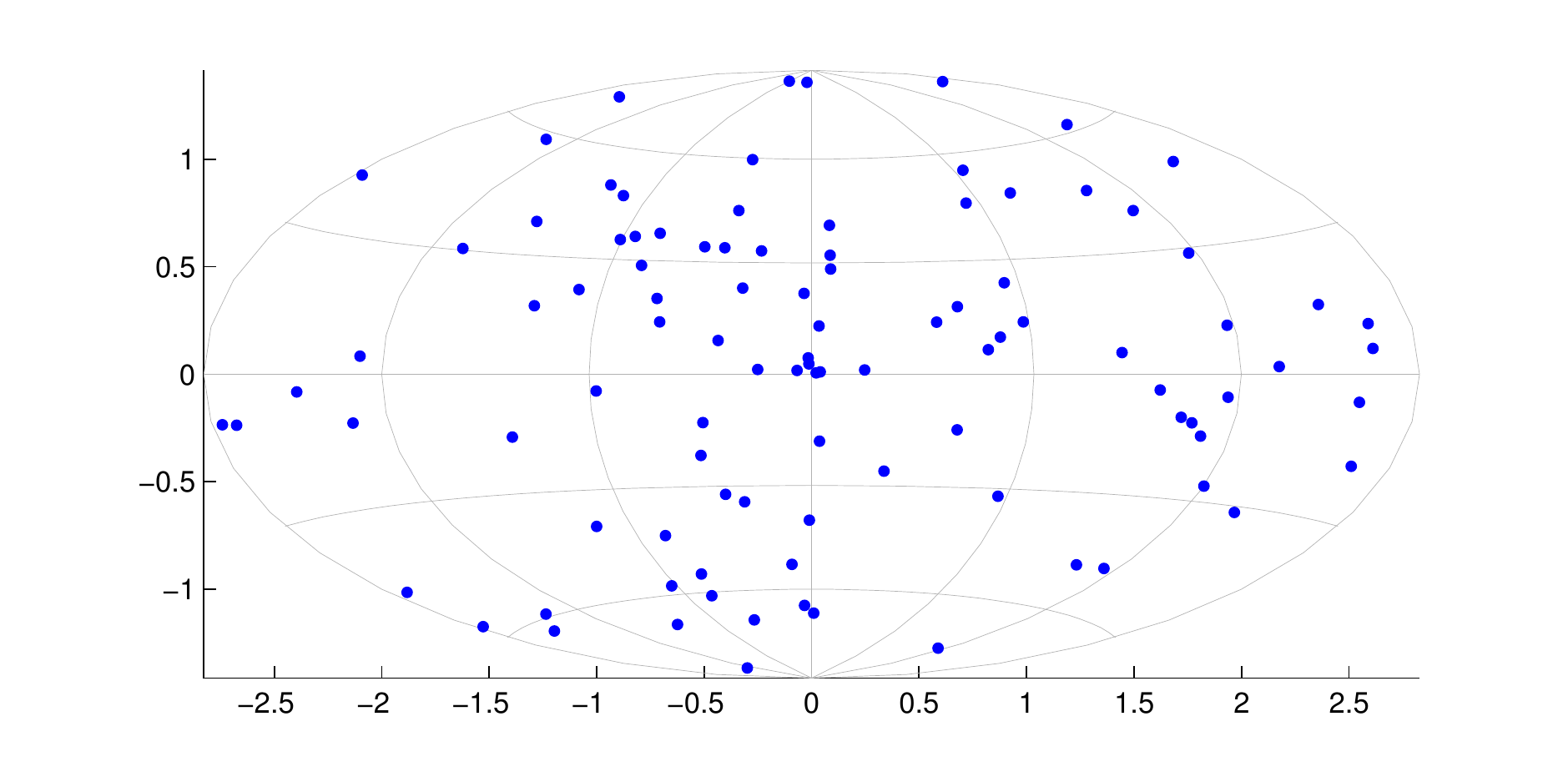}&
\includegraphics[scale=0.3]{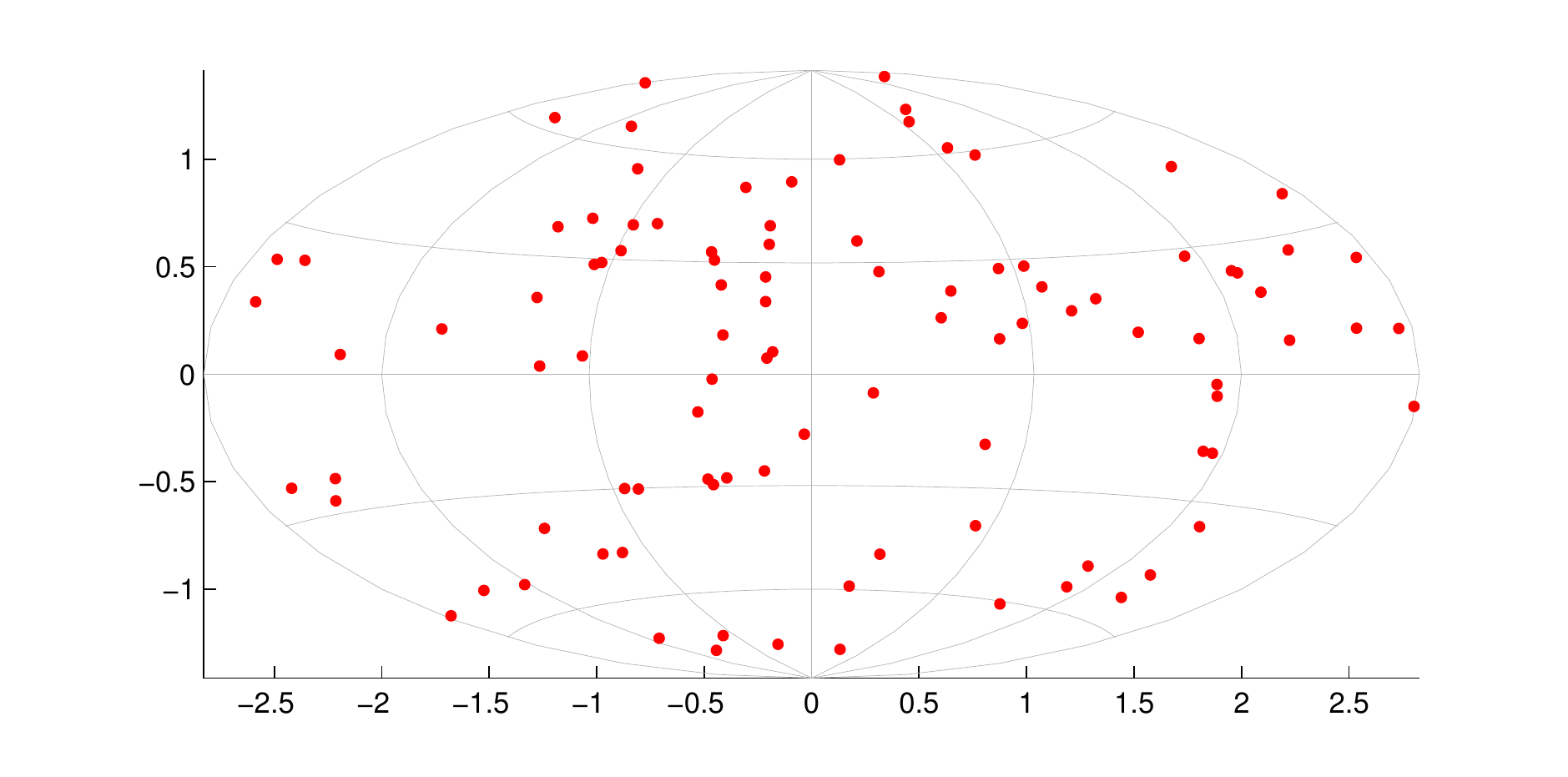}\\ %variance 0.1
a & b & c
\end{tabular}
\caption{a/ Representation of the $H_1^a$ density in spherical coordinates. b/ 100 random draws $X_{i}$ from $H_{1}^a$ distribution, 
c/ 100 random draws $Z_{i}$ from $H_{1}^a$ convolved with a Laplace noise with variance 0.1}
\label{visuH1a}
\end{center}\end{figure}

The second alternative that we consider and which is denoted by $H_{1}^b$ is the Watson distribution (\cite{watson65}).  Its density  is
$$f(\theta,\phi)=C\exp(-2\cos^{2}(\theta))$$
with $C$ such that $\int_{0}^{2\pi}\int_{0}^{\pi}f(\theta,\phi)\sin(\theta)d\theta d\phi =1$. 
This distribution has a girdle form, distributed around the equator. This choice is motivated by two reasons : first, this gives an alternative very different from $H_1^a$, second, it plays a role in applications. 
For example, in the case of gamma-ray bursts (see \cite{VedrenneAtteia09}), many theories assumed that the sources of these flashes 
were located around the galactic plane (then a girdle distribution), whereas other proposed that gamma-ray bursts come from beyond the Milky Way (rather a uniform distribution).
Figure~\ref{visuH1d} presents this alternative. Notice that the presence of noise (Figure~\ref{visuH1d} c/) prevents from seeing the equatorial nature of the distribution.

\begin{figure}[!h]\begin{center}
\begin{tabular}{ccc}
\includegraphics[scale=0.25]{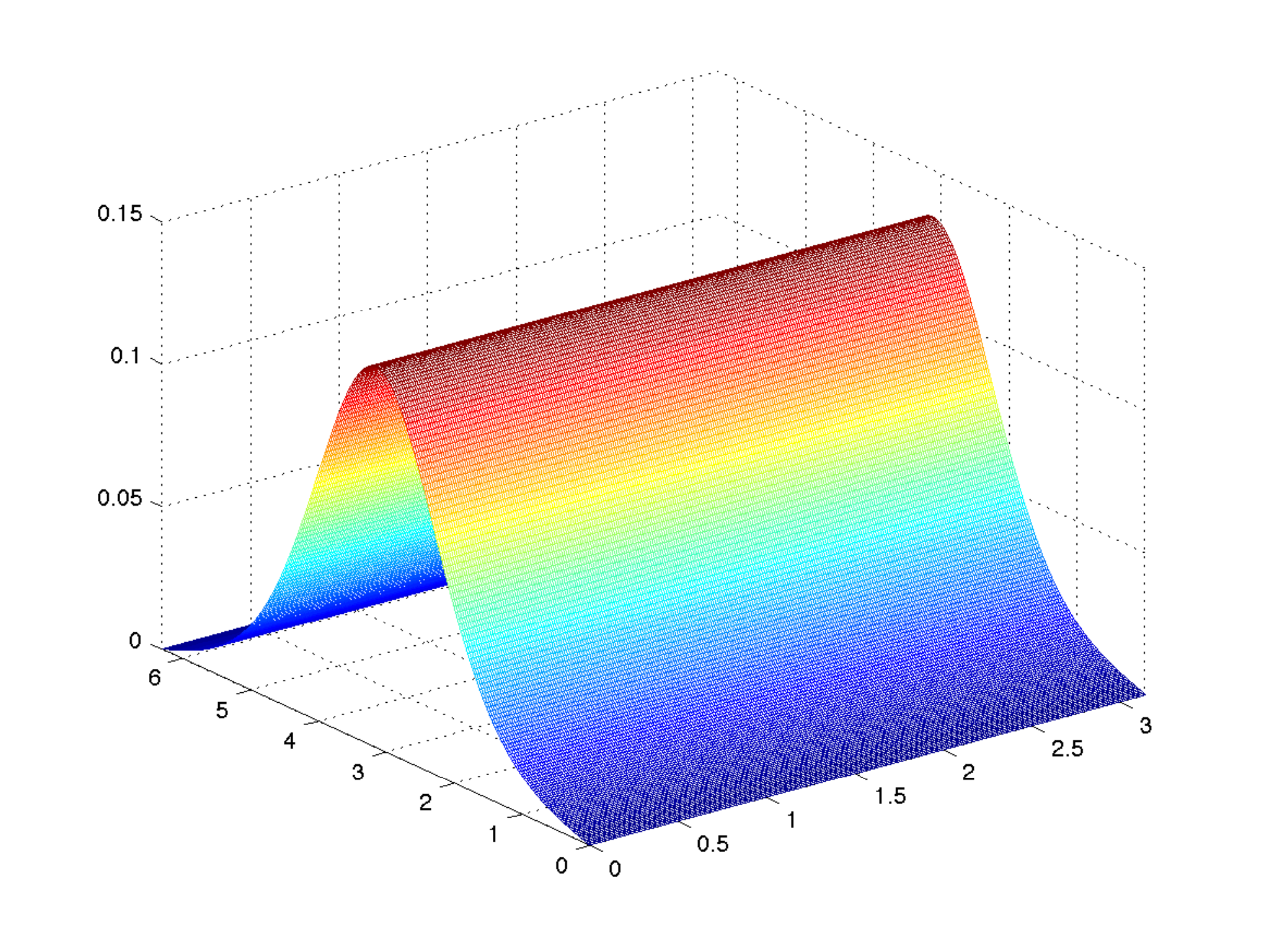}&
\includegraphics[scale=0.3]{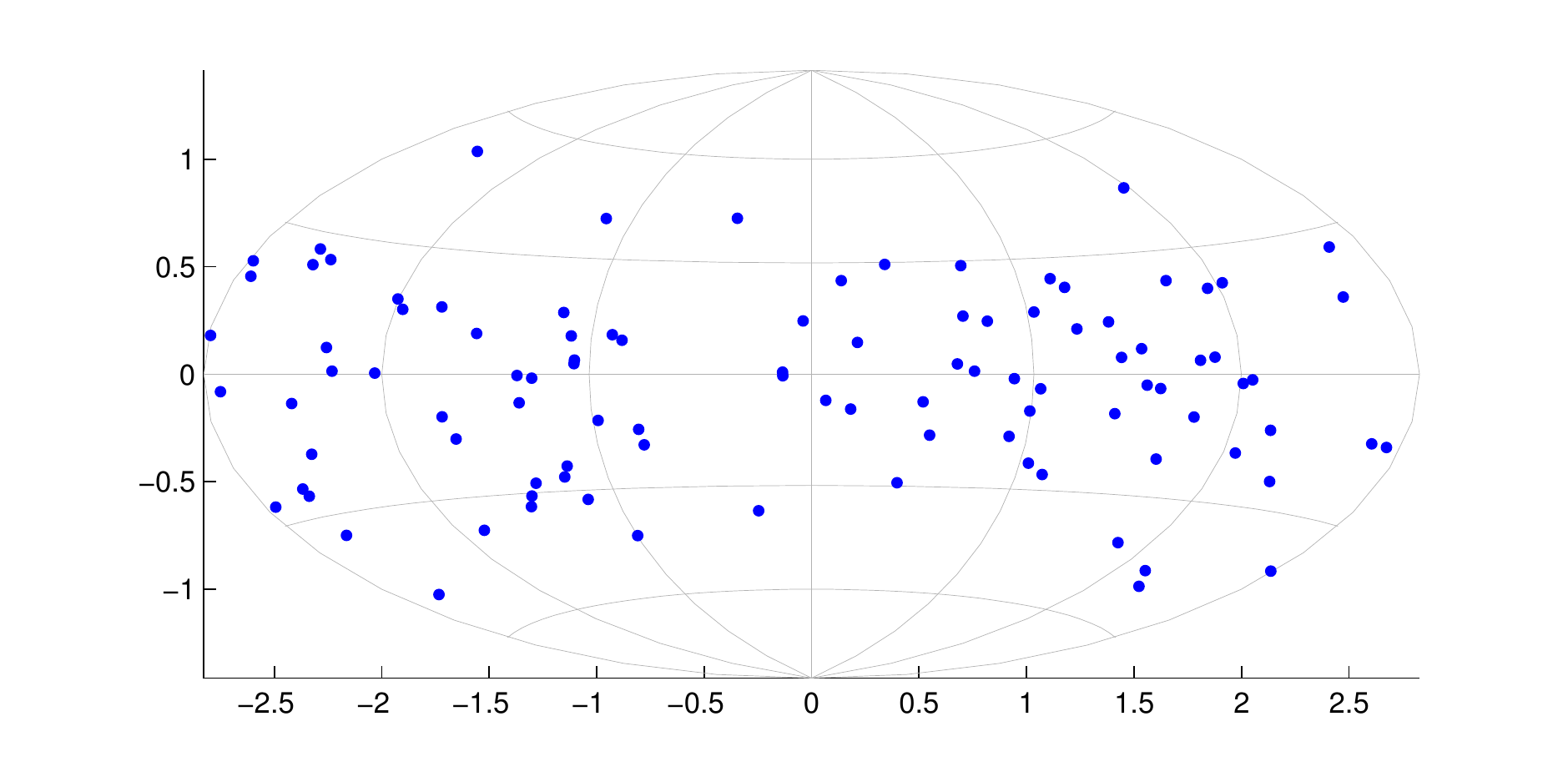}&
\includegraphics[scale=0.3]{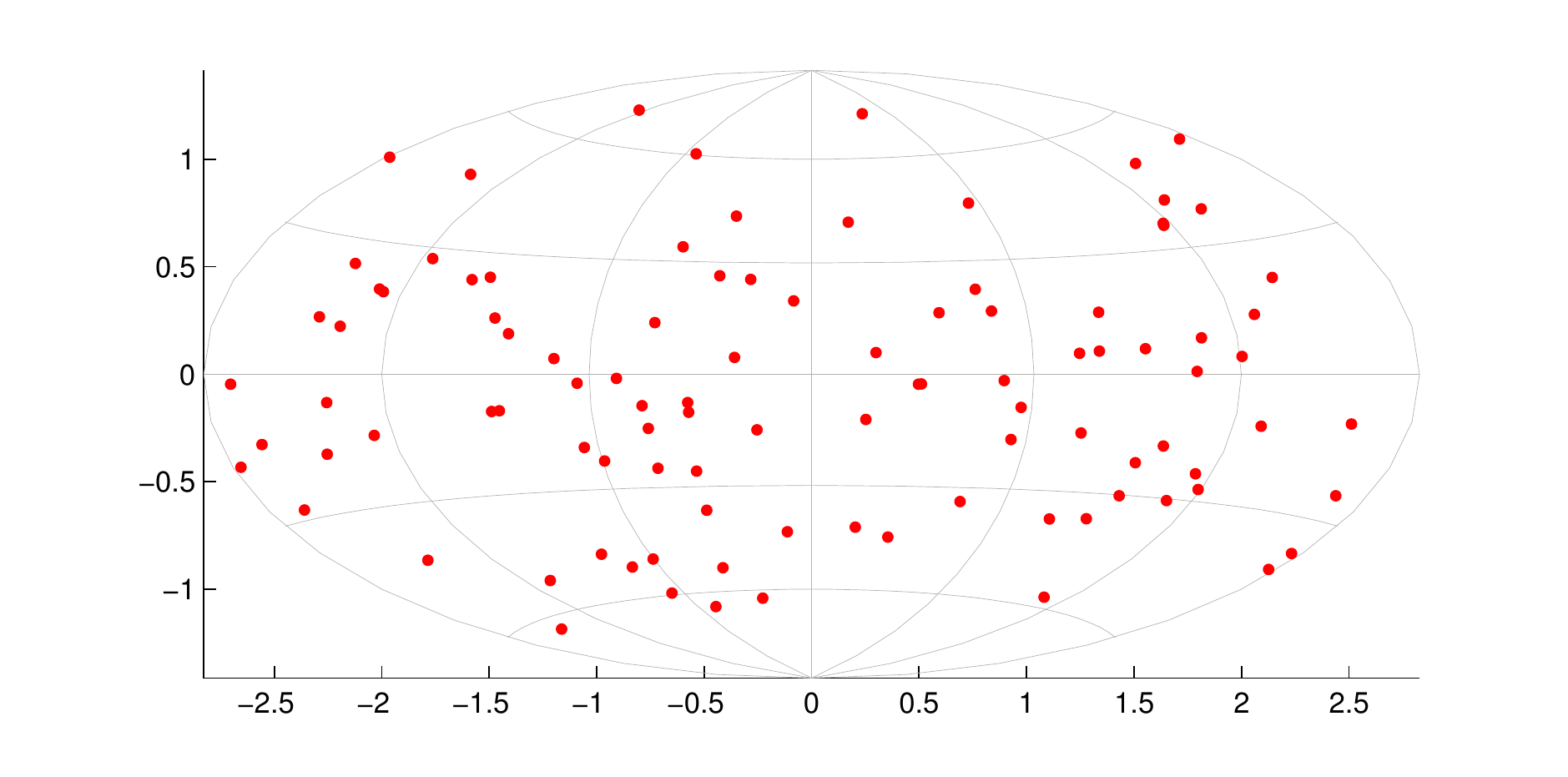}\\
a & b & c
\end{tabular}
\caption{a/ Representation of the Watson density in spherical coordinates. b/ 100 random draws $X_{i}$ from Watson distribution, 
c/ 100 random draws $Z_{i}$ from Watson distribution convolved with a Gaussian noise with variance 0.2}
\label{visuH1d}
\end{center}\end{figure}

\subsection{Simulations}

For the two alternatives, we computed the test power for a prescribed level of  5\%, for our test procedure (denoted by SHT), for the nearest neighbour test (denoted by NN) and the Beran-Giné test (denoted by BG).
Tables~\ref{H1aN100}-\ref{H1dN250}  give the results in percent for different kind of noisy data : no noise, Laplace noise with variance 0.05, 0.1, 0.2 and Gaussian noise with variance 0.05, 0.1, 0.2. \\

\begin{table}[!h]  \begin{center}
\begin{tabular}{c|c|c c c| c c c}
Noise type & No noise & \multicolumn{3}{c}{Laplace} & \multicolumn{3}{c}{Gaussian}\\
variance &&  0.05  &  0.1  &  0.2 &  0.05 &  0.1 &  0.2 \\
\hline
SHT  &53  & 18 & 13 & 10 & 13& 8& 8\\
NN & 19& 10& 10 & 8 &9 &7 & 7  \\ 
BG & 30&19 & 18& 10& 18& 12&11 \\
Multiple (J=3) & 62 &\\
PlugIn (J=3) & 63
\end{tabular}
\caption{Test powers for $N=100$ and  $H_{1}^a$}
\label{H1aN100}
\end{center}\end{table}

\begin{table}[!h]  \begin{center}
\begin{tabular}{c|c|c c c| c c c}
Noise type & No noise & \multicolumn{3}{c}{Laplace} & \multicolumn{3}{c}{Gaussian}\\
variance &&  0.05  &  0.1  &  0.2 &  0.05 &  0.1 &  0.2 \\
\hline
SHT  & 95  & 45 & 35 & 19 & 20 & 19& 12\\
NN & 26& 11& 11 & 8 &11 & 8 & 9  \\ 
BG & 61&43 & 38& 24& 41& 30&20
\end{tabular}
\caption{Test powers for $N=250$ and  $H_{1}^a$}
\label{H1aN250}
\end{center}\end{table}

\begin{table}[!h]\begin{center}
\begin{tabular}{c|c|c c c| c c c}
Noise type & No noise & \multicolumn{3}{c}{Laplace} & \multicolumn{3}{c}{Gaussian}\\
variance & &  0.05  &  0.1  &  0.2 &  0.05 &  0.1 &  0.2 \\
\hline
SHT & 100 & 98 & 83 & 49 & 45 & 31 & 21\\
NN &64 & 34 & 19 & 10 & 30 & 17 & 10  \\ 
BG & 93 & 48 & 27 & 15 & 32 & 19 & 10\\
\end{tabular}
\caption{Test powers for $N=100$ and  $H_{1}^b$}
\label{H1dN100}
\end{center}\end{table}

\begin{table}[!h]\begin{center}
\begin{tabular}{c|c|c c c| c c c}
Noise type & No noise & \multicolumn{3}{c}{Laplace} & \multicolumn{3}{c}{Gaussian}\\
variance & &  0.05  &  0.1  &  0.2 &  0.05 &  0.1 &  0.2 \\
\hline
SHT  & 100 & 100 & 100 & 95 & 73 & 86 & 51\\
NN & 91 & 52 & 34 & 16 & 46 & 26 & 10  \\ 
BG &  100 & 99 & 89 & 41 & 99 & 71 & 15
\end{tabular}
\caption{Test powers for $N=250$ and  $H_{1}^b$}
\label{H1dN250}
\end{center}\end{table}

As expected, the increase of the size sample improves the power, whereas presence of noise reduces it. 

For the $H_{1}^{a}$ alternative, in absence of noise, our procedure perform better than the two others. When adding some noise, our procedure has better results than the NN one but slightly worse than the BG procedure. 
It is only possible to compare our results with those of \cite{FDKP12} (see their Figure 7) for the noise free case and for $N=100$ observations. Their Multiple and PlugIn procedures for a resolution level equal to $3$ performs better than the three other procedures presented here. Nonetheless, it is important to notice that our procedure SHT %\textit{Adaptive test} 
is entirely data-driven and has no tuning parameter, contrary to the one of \cite{FDKP12}. In addition, \cite{FDKP12} has not dealt with the noise scenario and their procedures are rather complicated and lengthy for practical purposes.

When the alternative is the Watson distribution, our procedure performs pretty well and is clearly better than the others. Indeed the test powers for our procedure are often twice higher.
\bigskip

We also computed the ROC curves for the three methods for different noise and numbers of observations settings. Let us recall that the \textit{Receiver Operating Characteristic} curves allow to illustrate the performance of a test by plotting the true positive rate vs. the false positive rate, at various threshold settings. Roughly speaking, greater the area under the ROC curve, better the test. 

\begin{figure}[!h]\begin{center}
\begin{tabular}{cc}
\includegraphics[scale=0.3]{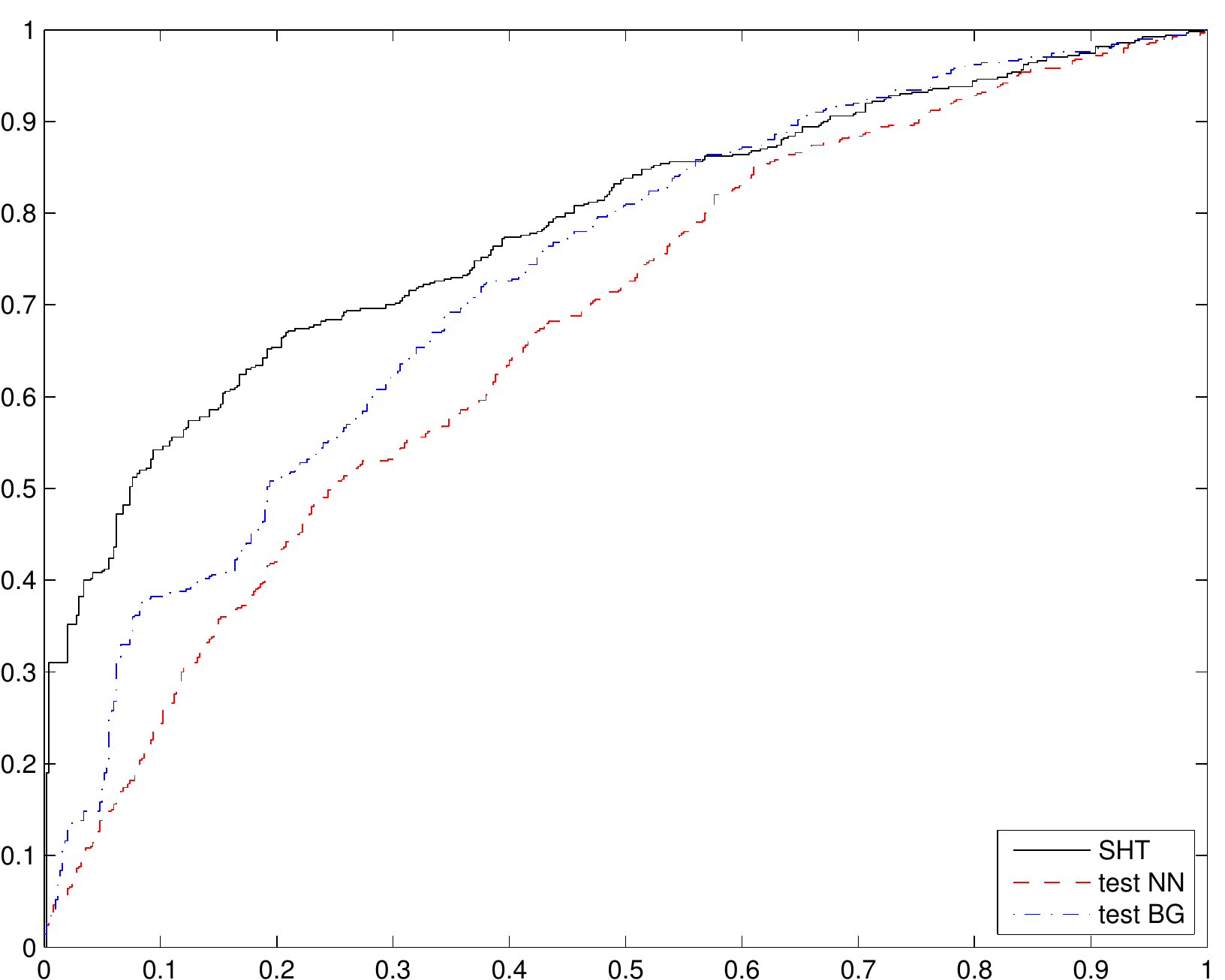}&
\includegraphics[scale=0.3]{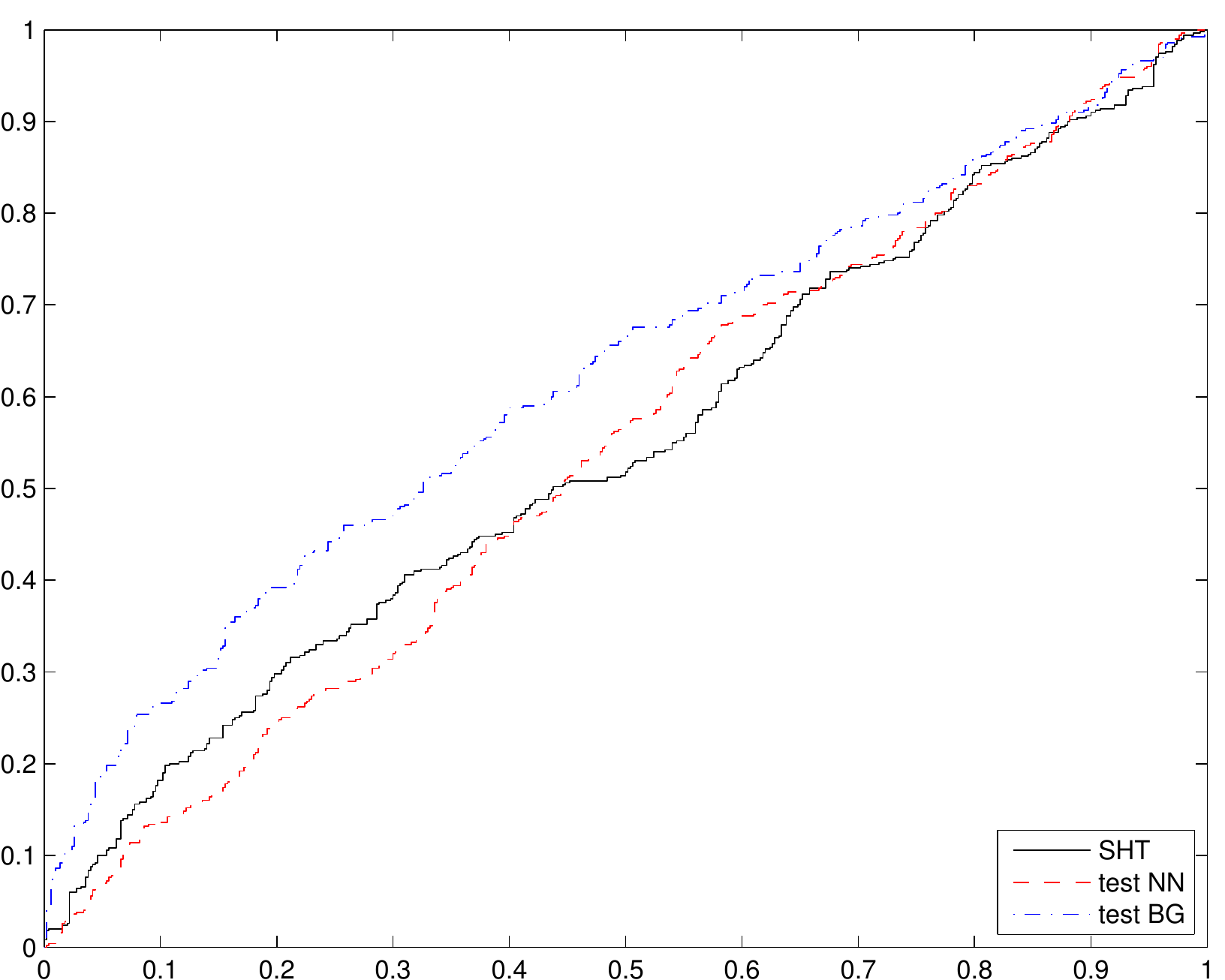}\\
a & b 
\end{tabular}
\caption{ROC Curves for the three methods and for the alternative $H_1^a$: a/ No noise and $N=100$. b/ Laplace noise with variance 0.1 and $N=100$.
}
 \label{ROCH1a}
\end{center}\end{figure}

\begin{figure}[!h]\begin{center}
\begin{tabular}{ccc}
\includegraphics[scale=0.3]{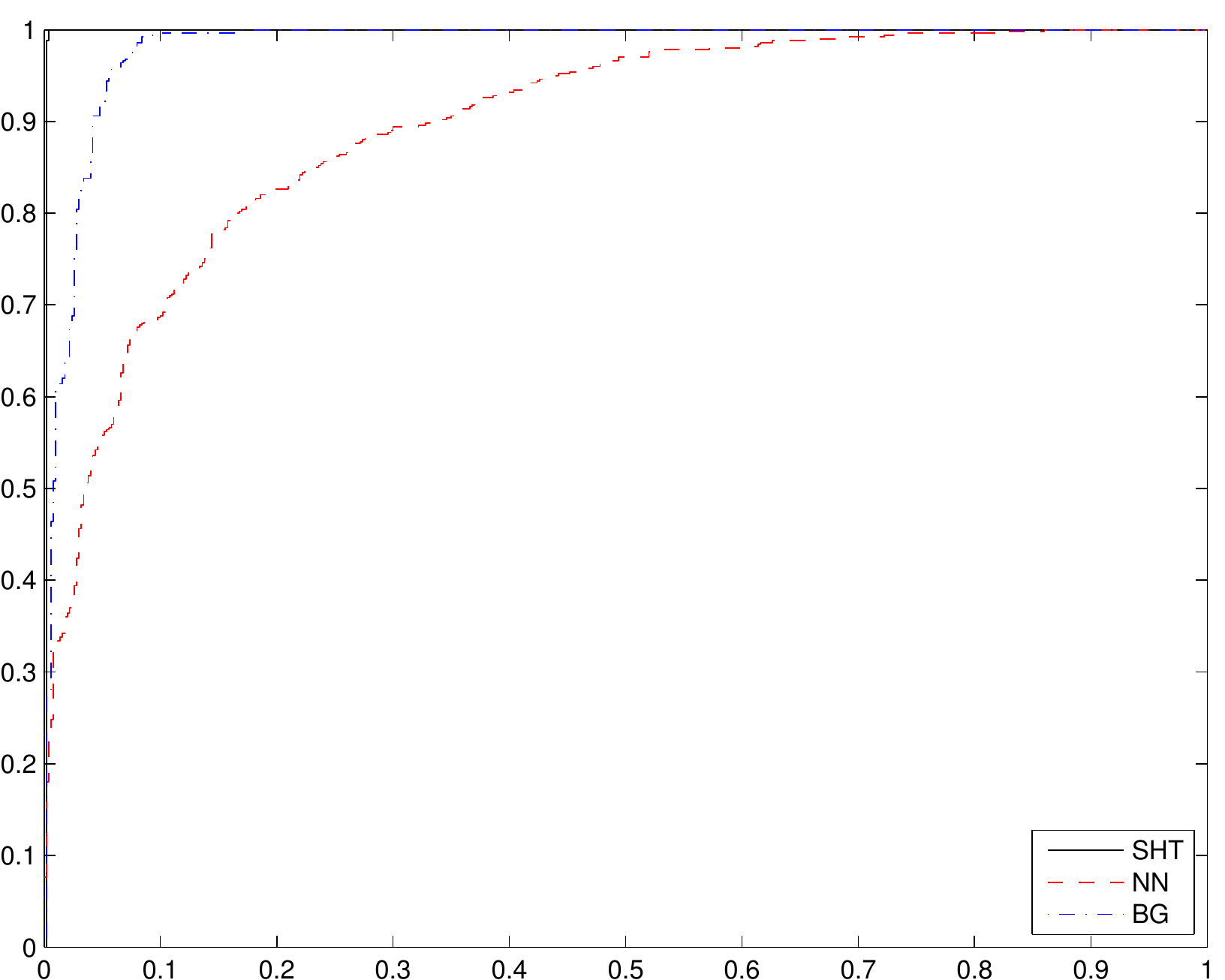}&
\includegraphics[scale=0.3]{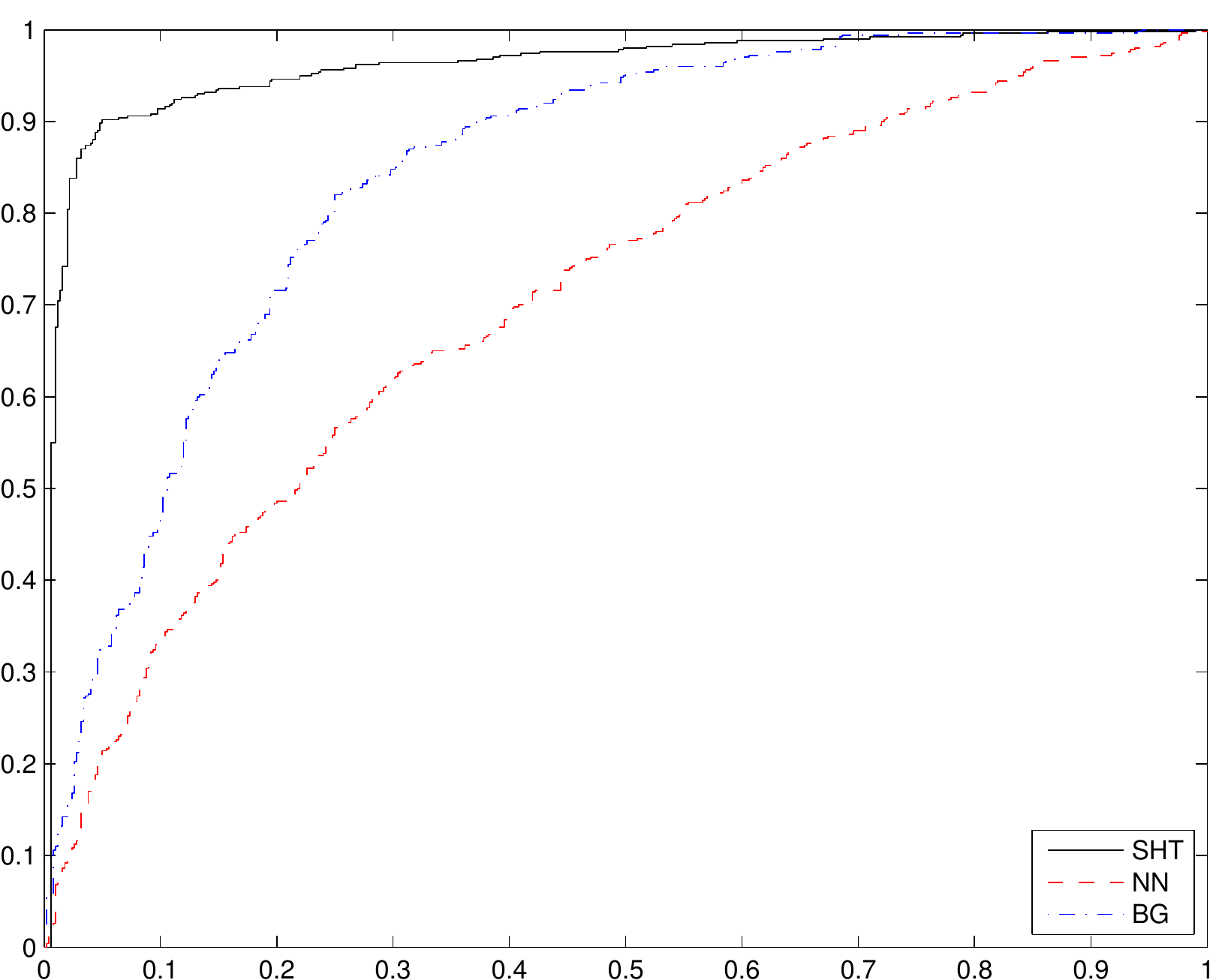}\\
a & b 
\end{tabular}
\caption{ROC Curves for the three methods and for the alternative $H_1^b$: a/ No noise and $N=100$. b/ Laplace noise with variance 0.1 and $N=100$.}
 \label{ROCH1d}
\end{center}\end{figure}

We precise that on Figure \ref{ROCH1d}a the solid line corresponding to the performance of our procedure SHT is mixed up with the axes passing through the points $(0,0), (0,1)$ and $(1,1)$.

\subsection{Real data: Paleomagnetism}

Some minerals in rocks have the particular property of conserving the direction of magnetic field when they cool down. This allows geologists to retrieve the direction of the Earth's magnetic field in the past ages, and this also provides information about the past location of tectonic plates. Since these records of magnetic field are directions, it is about spherical data. According to the accuracy of the measuring devices, the  measurements can be more or less noisy. To illustrate our method, we use the data given by \cite{FisherLewisEmbleton}: these are 52 measurements of magnetic remanence from specimens of red beds from the Bowen Basin (Queensland, Australia), after thermal demagnetisation to 670°C. Demagnetization is a process which is used in order to eliminate unwanted magnetic fields.  For our illustration, the advantage of this data set is that the possible anisotropy is not visible to the naked eye, contrary to a lot of data sets in paleomagnetism, see Figure~\ref{Paleomag}. 

\begin{figure}[!h]\begin{center}
\includegraphics[scale=0.4]
{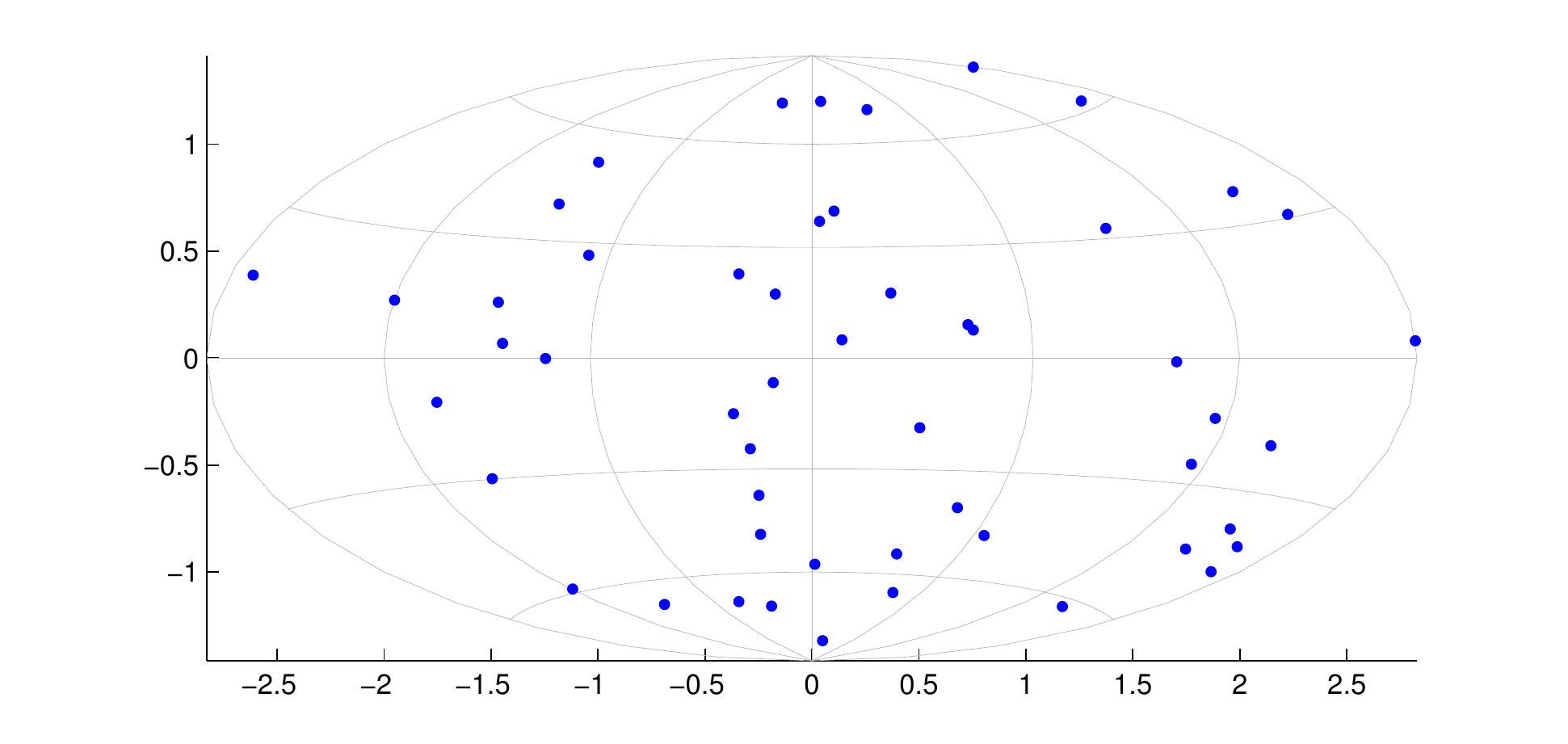}
\caption{Representation of the 52 measurements of magnetic remanence (Hammer projection) }
\label{Paleomag}
\end{center}\end{figure}

The obtained $p$-values for this sample are given in Table~\ref{pvalmagn}, assuming different kinds of possible noise. 
Whatever the possible noise we consider, there is no a statistical evidence of anisotropy, which indicates that the demagnetization process succeeded.

\begin{table}[!h]\begin{center}
\begin{tabular}{c|c|c c c| c c c}
%& No noise & L 0.05  & L 0.1  & L 0.2 & G 0.05 & G 0.1 & G 0.2 \\
Noise type & No noise & \multicolumn{3}{c}{Laplace} & \multicolumn{3}{c}{Gaussian}\\
variance & &  0.05  &  0.1  &  0.2 &  0.05 &  0.1 &  0.2 \\
\hline
$p$-value & 0.90 & 0.81 & 0.79 & 0 .77 & 0.73 & 0.63 & 0.74
\end{tabular}
\caption{$p$-values for the magnetism data.}
\label{pvalmagn}
\end{center}\end{table}

\subsection{Real data: UHECR}

To apply our procedure to UHECR data of observatory Pierre Auger (\cite{PA}), we need to take into account the observatory exposure. Indeed, only cosmic rays with zenith angle of arrival less that 60° can be observed. Then, a coverage function over the years of observation can be computed from geometrical considerations and it is displayed in Figure~\ref{UHECR}. 

\begin{figure}[!h]\begin{center}
\begin{tabular}{ccc}
\includegraphics[scale=0.4]
{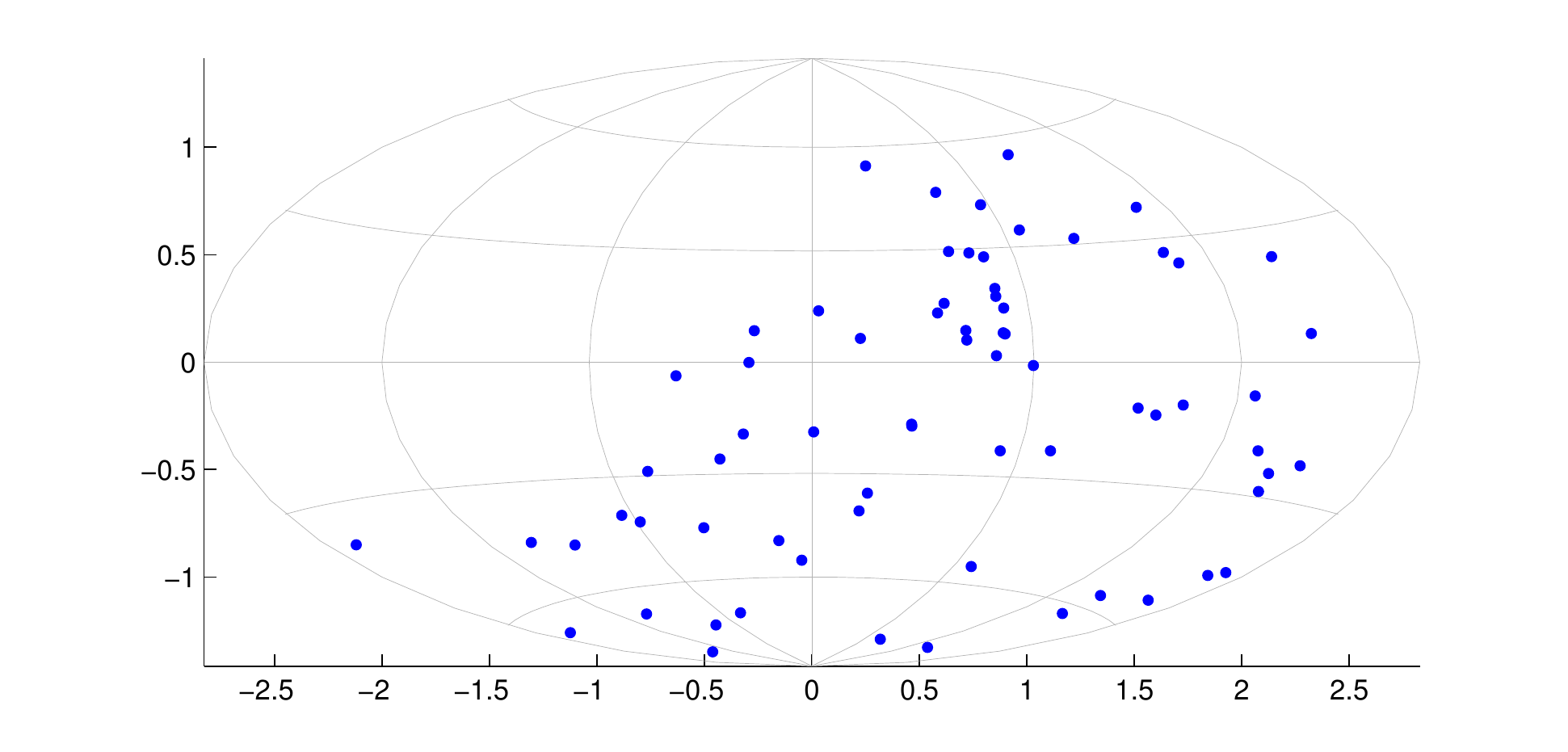}&
\includegraphics[scale=0.4]{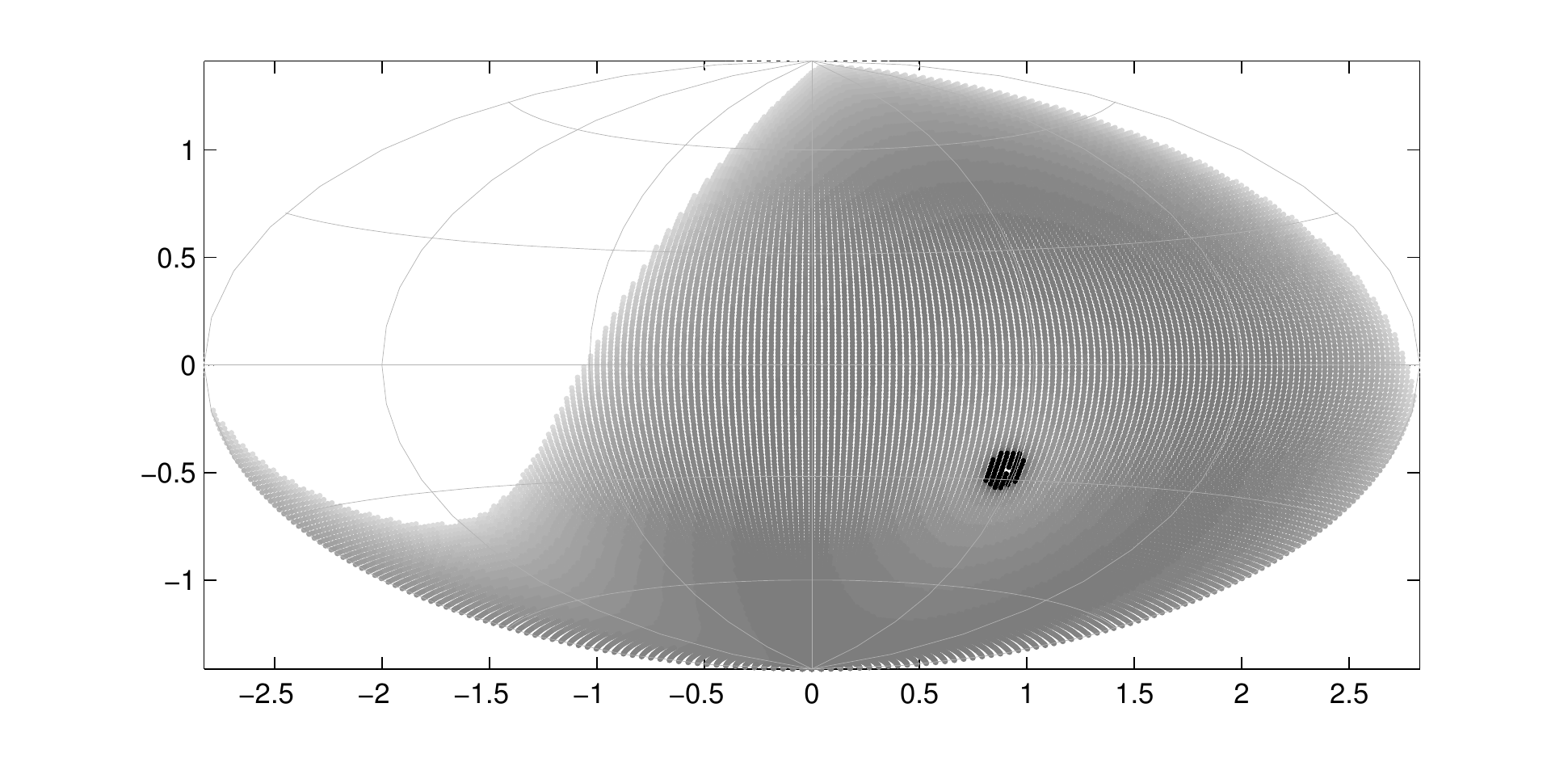}\\
a & b
\end{tabular}
\caption{a/ Representation of the 69 arrival directions of highest energy cosmic rays (Pierre Auger data) b/ Coverage function $g_{0}$ for the Pierre Auger observatory (the darker the more observed, white area non-observed)}
\label{UHECR}
\end{center}\end{figure}

In addition to the noise due to extragalactic magnetic fields, a selection is done depending on whether the ray is in the observation area. Denoting the coverage density by $g_{0}$, the observations are now $V_{1},\dots, V_{N}$ where the density of $V$ is proportional to $g_{0}$ times $f_{Z}$: $f_{V}=cg_{0}f_{Z}$, with $c$ such that $f_{V}$ is a density. The relevant test is then $f=f_{0}\Leftrightarrow f_{V}=g_{0}$. Although we do not  extend our theorems to this case, we nevetherless implement an extended method. Our initial test procedure is based on the estimation of $(f_{Z})^{*l}_{n}$ by $N^{-1}\sum_{i}\overline{Y^l_n(Z_i)}$. Then it is sufficient to apply the same procedure but with estimator 
$N^{-1}\sum_{i=1}^{N}({\overline{Y^l_n}}/(cg_{0}))(V_i).$ Indeed this quantity approximates $\int f_{V}\overline{Y^l_n}/(cg_{0}) =\int f_{Z}\overline{Y^l_n}=(f_{Z})^{*l}_{n}$.
Using this method, we obtain the $p$-values given in Table~\ref{pvalPA}, assuming different kind of possible noise. 

\begin{table}[!h]\begin{center}
\begin{tabular}{c|c|c c c| c c c}
%& No noise & L 0.05  & L 0.1  & L 0.2 & G 0.05 & G 0.1 & G 0.2 \\
Noise type & No noise & \multicolumn{3}{c}{Laplace} & \multicolumn{3}{c}{Gaussian}\\
variance & &  0.05  &  0.1  &  0.2 &  0.05 &  0.1 &  0.2 \\
\hline
$p$ & 0.003 & 0.014 & 0.034 & 0.092 & 0.016 & 0.001 & 0.076
\end{tabular}
\caption{$p$-values for the Pierre Auger data.}
\label{pvalPA}
\end{center}\end{table}

Then our method confirms what was already noticed by astrophycisists: there seems to be some kind of anisotropy in the UHECR phenomenon.

\section{Proofs}

\subsection{Proof of Theorem~\ref{bi}}

As usual in the proofs of lower bounds (see for instance \cite{Ingster00}, \cite{tsybakov09}), we build a set of functions quite far from $f_0$ in terms of the $\mathbb{L}_2$ norm, but whose distance between the resulting models is small. More precisely, let $\gamma=\gamma(N)$ and $L=L(N)$, respectively a scale factor and an even resolution level to be specified later. For all $l< L$ and $m\in \{-l,\dots, l\}$, we define
$\varphi_{lm}$ the function such that $$f_\varepsilon*\varphi_{lm}=Y_m^l.$$
Here $Y_m^l$ denotes the real form of the spherical harmonic, that we denote as the complex form for the sake of simplicity. Using the real form ensures that 
$\varphi_{lm}$ is real. The existence of such a function is ensured by the assumption of invertibility of matrices $f_\varepsilon^{\star l}$ and we can write 
$\varphi_{lm}=\sum_{n=-l}^l (f_{\eps^{-1}}^{\star l})_{nm}Y_n^l$. Now, for
$\theta_{lm}$, $l< L$, $m\in \{-l,\dots, l\}$, independent random variables with distribution $\P(\theta_{lm}=\pm\gamma)=1/2$,  we introduce 
$$f_\theta=f_0+\sum_{l=L/2}^{L-1}\sum_{m=-l}^l\theta_{lm}\varphi_{lm}.$$
In the sequel we show that, for good choices of $\gamma$ and $L$,  
\begin{itemize}
 \item $f_\theta$ belongs to $W_s(\S^2,R)$, 
 \item $f_\theta$ is a density function,
 \item $\|f_\theta-f_0\|^2\geq \mathcal{C}\psi_N$,
 \item $\chi^2(\P_\theta,\P_{f_0})\leq (1-\eta)^2\quad$ for $N$ large enough. 
\end{itemize}
For a definition of the $\chi^2$ divergence, see Section 2.4 in  \cite{tsybakov09}.\\
Then, for any test $T$, 
%if $T$ is the maximum likelihood test which is equal to 1 if $\prod_{i=1}^N f_{\theta}(X_i)/f_0(X_i)\geq 1$ and 0 otherwise, 
\begin{equation}\label{lb}
\P_{f_0}(T=1)+\P_\theta(T=0)\geq \int \min (d\P_\theta,d\P_{f_0})\geq 1-\sqrt{\chi^2(d\P_\theta,d\P_{f_0})}\geq 1-(1-\eta)
\end{equation}
using Lemma 2.1 (Scheff\'e's theorem) and inequality (2.27) in Section 2.4 in \cite{tsybakov09}. Thus, for $N$ large enough,
$$\P_{f_0}(T=1)+\sup_{f\in H_1(s,R,{\cal C}\psi_N)}\P_f(T=0)\geq \eta$$
and the result is proved. \\

\noindent$\bullet$\textit{Belonging to the Sobolev space: }\\ 
We compute
\begin{eqnarray*}
\sum_{l\geq 0}\sum_{ n=-l}^l (1+l(l+1))^{s} |<f_\theta, Y_n^l>|^2
&=& |<f_\theta, Y_0^0>|^2+\sum_{l=L/2}^{L-1}\sum_{n=-l}^{l} (1+l(l+1))^{s} |<f_\theta, Y_n^l>|^2\\
&\leq& |<f_0, Y_0^0>|^2+\sum_{l=L/2}^{L-1}(1+l(l+1))^{s} \sum_{n=-l}^{l}\left|\sum_{m=-l}^l\theta_{lm} (f_{\eps^{-1}}^{\star l})_{nm}\right|^2\\
&\leq &\frac1{4\pi}+\sum_{l=L/2}^{L-1}(1+l(l+1))^{s}\|f_{\eps^{-1}}^{\star l}\|^2_{op}\sum_{m=-l}^l|\theta_{lm}|^2 \\
&\leq &\frac1{4\pi}+d_0^{-2}\gamma^2\sum_{l=L/2}^{L-1}(1+l(l+1))^{s}l^{2\nu}(2l+1)\\
&\leq &\frac1{4\pi}+C_1(d_0,s,\nu)\gamma^2L^{2s+2\nu+2}.
\end{eqnarray*}
Hence, belonging to the Sobolev ball imposes that
$$  \gamma^2L^{2(s+\nu+1)} \leq \frac{R^2}{C_1(d_0,s,\nu)}.$$
Then it is sufficient to choose  $\gamma^2=c_1 L^{-2(\nu+s+1)}$ with $c_1\leq{R^2}/C_1(d_0,s,\nu)$.\\

%%%%%%%%%%%%%%%%%%%%%%%%%%%
%%%%%%%%%%%%%%%%%%%%%%%%%%%
\noindent$\bullet$\textit{Density: }\\ 
Since, for $l>0$,
$$\frac{1}{\sqrt{4\pi}}\int \varphi_{lm}=(\varphi_{lm})_0^{\star 0}=0$$
we obtain $\int f_\theta=1$. 
Let us now show that $f_\theta\geq 0$. 
We first use the Cauchy-Schwarz inequality
\begin{eqnarray*}
|f_\theta(x) -f_0(x)|^2&\leq &\sum_{l=L/2}^{L-1}\sum_{n=-l}^{l}|\langle f_\theta -f_0, Y_n^l\rangle|^2\sum_{l=L/2}^{L-1}\sum_{n=-l}^{l} |Y_n^l(x)|^2 \end{eqnarray*}
Next, since spherical harmonics have the property $\sum_{n=-l}^{l} |Y_n^l|^2 \leq (2l+1)/(4\pi)$, 
\begin{eqnarray*}
|f_\theta(x) -f_0(x)|^2&\leq & \sum_{l=L/2}^{L-1}\sum_{n=-l}^{l}\left|\sum_{m=-l}^l\theta_{lm} (f_{\eps^{-1}}^{\star l})_{nm}\right|^2
\sum_{l=L/2}^{L-1}\frac{2l+1}{4\pi}\\
&\leq &\frac{3}{8\pi}\sum_{l=L/2}^{L-1}\|f_{\eps^{-1}}^{\star l}\|^2_{op}\sum_{m=-l}^l|\theta_{lm}|^2 L^2 \\
&\leq &\frac{9d_0^{-2}}{8\pi}\gamma^2\sum_{l=L/2}^{L-1}l^{2\nu +1} L^2 
\leq C_2^2\gamma^2L^{2\nu +4}  
 \end{eqnarray*}
with $C_2^2={9d_0^{-2}}/({8\pi(2\nu +2)})$.
 Thus, replacing $\gamma$ by its value,
$$f_\theta\geq f_0-\|f_\theta- f_0\|_\infty\geq \frac1{4\pi}-C_2\gamma L^{\nu+2}\geq \frac1{4\pi}-C_2\sqrt{c_1}L^{1-s}.$$
Now, since $s\geq 1$, $f_\theta$ is a density as soon as 
$$\frac1{4\pi}-C_2\sqrt{c_1}\geq 0\Leftrightarrow c_1\leq \frac{\nu +1}{9\pi d_0^{-2}}.$$

%%%%%%%%%%%%%%%%%%%%%%%%%%%%%%%
%%%%%%%%%%%%%%%%%%%%%%%%%%%%%%%
\noindent$\bullet$\textit{Separation rate: }\\
We denote $p_\theta=f_\varepsilon*f_\theta=f_0+\sum_{l=L/2}^{L-1}\sum_{m=-l}^l\theta_{lm}Y_m^l$. Then
\begin{eqnarray*}
 \|p_\theta -f_0\|^2&=&\|f_\eps*(f_\theta-f_0)\|^2=\sum_{l=L/2}^{L-1}\sum_{m=-l}^l|(f_\eps^{\star l} (f_\theta-f_0)^{\star l})_m|^2
\\&\leq &\sum_{l=L/2}^{L-1}\|f_\eps^{\star l}\|_{op}^2 \sum_{m=-l}^l|(f_\theta-f_0)^{\star l}_m|^2
\leq d_1^2(L/2)^{-2\nu}\|f_\theta-f_0\|^2. 
\end{eqnarray*}
Moreover 
$$\|p_\theta -f_0\|^2=\sum_{l=L/2}^{L-1}\sum_{m=-l}^{l}|\theta_{lm}|^2=\gamma^2 \sum_{l=L/2}^{L-1} (2l+1)\geq C\gamma^2 L^2.$$
Finally
$\| f_\theta-f_0\|^2 \geq C_3(d_1,\nu)\gamma^2L^{2\nu+2} = C_3(d_1,\nu)c_1 L^{-2s}$.
Now we choose 
$$L=2\left\lfloor N^{1/(2s+2\nu+1)}\right\rfloor$$
where $\lfloor x\rfloor$ denotes the largest integer which is smaller than or equal to $x$.
Thus $\| f_\theta-f_0\|^2 \geq  C_3(d_1,\nu)c_1 2^{-2s}\psi_N\geq {\cal C}\psi_N$ as soon as ${\cal C}\leq C_3(d_1,\nu)c_12^{-2s}.$\\

%%%%%%%%%%%%%%%%%%%%%%%%%%%%%%%%%%%
%%%%%%%%%%%%%%%%%%%%%%%%%%%%%%%%%%%
 \noindent$\bullet$\textit{Chi-square divergence:}\\
We denote  $\mu$ the measure defined by $d\mu(\theta)=\prod_{l\geq 0, |m|\leq l}(\delta_1+\delta_{-1})/2.$ 
We want to show that 
$$ \E_{f_0}\left(\left(\frac{d\P_{\mu}}{d\P_{f_0}}-1\right)^2\right)\leq (1-\eta)^2$$
where $$\frac{d\P_{\mu}}{d\P_{f_0}}=\int \prod_{i=1}^N\frac{p_\theta(Z_i)}{p_0(Z_i)}\mu(d\theta)
=\E_{\mu}\prod_{i=1}^N{4\pi p_\theta(Z_i)}.$$
First, note that $\E_{f_0}(4\pi p_\theta(Z_1))=1+\sum_{l\geq 0, | m|\leq l}4\pi\theta_{lm}\int Y_m^l=1$.
Then, using Fubini and the independence of the $Z_i$, $\E_{f_0}(\frac{d\P_{\mu}}{d\P_{f_0}})=\E_{\mu}\prod_{i=1}^n\E_{f_0}(4\pi p_\theta(Z_i))=
1$.
So it is sufficient to prove that 
$$\E_{f_0}\left(\left(\frac{d\P_{\mu}}{d\P_{f_0}}\right)^2\right)\leq 1+(1-\eta)^2.$$
Using Fubini, 
\begin{eqnarray*}
\frac{1}{(4\pi)^{2N}} \E_{f_0}\left(\left(\frac{d\P_{\mu}}{d\P_{f_0}}\right)^2\right)
&=& \E_{f_0}\left(\E_{\mu\times\mu}\prod_{i=1}^Np_\theta(Z_i) \prod_{i=1}^Np_{\theta'}(Z_i)\right)
=%\\&=&
\E_{f_0}\left(\E_{\mu\times\mu}\prod_{i=1}^Np_\theta(Z_i)p_{\theta'}(Z_i)\right)\\
&=&\E_{\mu\times\mu}\left(\E_{f_0}\prod_{i=1}^Np_\theta(Z_i) p_{\theta'}(Z_i)\right)
\end{eqnarray*}
where $d(\mu\times\mu)((\theta,\theta'))=d\mu(\theta)d\mu(\theta')$.
Now, the $Z_i$ being independent, we write
\begin{eqnarray}\label{a}
 \E_{f_0}\left(\prod_{i=1}^Np_\theta(Z_i) p_{\theta'}(Z_i)\right)
=\prod_{i=1}^N\E_{f_0}(p_\theta(Z_i) p_{\theta'}(Z_i))
=\left(\int p_\theta p_{\theta'}\frac{1}{4\pi}\right)^N.
\end{eqnarray}
Using the definition of $p_\theta$ and the orthogonality of the $Y_m^l$, we obtain 
\begin{equation}\label{b}
%  \int p_\theta p_{\theta'}=\frac1{4\pi}+\gamma^2\sum_{l_1 m_1}\sum_{l_2 m_2}\theta_{l_1m_1}\theta'_{l_2m_2}\int Y_{m_1}^{l_1}Y_{m_2}^{l_2}.
 \int p_\theta p_{\theta'}=\frac1{4\pi}+\sum_{l=L/2}^{L-1}\sum_{m=-l}^l\theta_{lm}\theta'_{lm}.
\end{equation}
%Let us denote $a_{kl}$ the quantity: $a_{kl}=4\pi\int \psi_{Jk}\psi_{Jl}$.
%
Combining \eqref{a} and \eqref{b}, and then inequality $1+a\leq e^a$, gives
\begin{eqnarray*}
 \E_{f_0}\left(\prod_{i=1}^Np_\theta(Z_i) p_{\theta'}(Z_i)\right)&=&\left(\frac1{(4\pi)^2}+\frac{1}{4\pi}\sum_{l=L/2}^{L-1}\sum_{m=-l}^l\theta_{lm}\theta'_{lm}\right)^N\\
&\leq& \frac1{(4\pi)^{2N}}\exp(N4\pi\sum_{l=L/2}^{L-1}\sum_{m=-l}^l\theta_{lm}\theta'_{lm})\\
&\leq&  \frac1{(4\pi)^{2N}}\prod_{l=L/2}^{L-1}\prod_{m=-l}^l\exp(N4\pi\theta_{lm}\theta'_{lm})
\end{eqnarray*}
so that 
\begin{eqnarray*}
  \E_{f_0}\left(\left(\frac{d\P_{\mu}}{d\P_{f_0}}\right)^2\right)
\leq \E_{\mu\times\mu} \prod_{l=L/2}^{L-1}\prod_{m=-l}^l\exp(N4\pi\theta_{lm}\theta'_{lm}).
\end{eqnarray*}
Using the distribution of $(\theta, \theta')$, we obtain
$$
  \E_{f_0}\left(\left(\frac{d\P_{\mu}}{d\P_{f_0}}\right)^2\right)
\leq \prod_{l=L/2}^{L-1}\prod_{m=-l}^l\frac12\exp(N\gamma^24\pi)+\frac12\exp(-N\gamma^24\pi)
\leq \prod_{l=L/2}^{L-1}\prod_{m=-l}^l\cosh(N\gamma^24\pi).
$$
Now, using $\cosh(2x)=1+2\sinh^2(x)$ and inequality $1+a\leq e^a$,
\begin{eqnarray*}
\E_{f_0}\left(\left(\frac{d\P_{\mu}}{d\P_{f_0}}\right)^2\right)&\leq& \prod_{l=L/2}^{L-1}\prod_{m=-l}^l(1+2\sinh^2(N\gamma^22\pi))
\leq \prod_{l=L/2}^{L-1}\prod_{m=-l}^l\exp(2\sinh^2(N\gamma^22\pi))\\
&\leq& \exp(2\sum_{l=L/2}^{L-1}\sum_{m=-l}^l\sinh^2(N\gamma^22\pi)).
\end{eqnarray*}
Since $N\gamma^2\to 0$ and $\sinh(x)=x+o(x)$, there exists $C_4>0$ such that, for $N$ large enough,
$$\sinh^2(N\gamma^22\pi)\leq C_4N^2\gamma^4.$$
Then 
$$\sum_{l=L/2}^{L-1}\sum_{m=-l}^l\sinh^2(N\gamma^22\pi)\leq C_4N^2\gamma^4\sum_{l=L/2}^{L-1}\sum_{m=-l}^l1\leq C_5N^2\gamma^4 L^2.$$
That yields, for $N$ large enough,
$$\E_{f_0}\left(\left(\frac{d\P_{\mu}}{d\P_{f_0}}\right)^2\right)\leq \exp(2C_5N^2\gamma^4L^2)
\leq \exp(2C_5c_1^2N^2L^{-4s-4\nu-2}).$$
But remember that $N^2<((L+1)/2)^{4s+4\nu+2}$ so that 
$\E_{f_0}\left(\left(\frac{d\P_{\mu}}{d\P_{f_0}}\right)^2\right)\leq \exp(C_6(s,\nu)c_1^2)\leq 1+(1-\eta)^2$ for a good choice of $c_1$.

\subsection{Proof of Theorem~\ref{biadap}}

We follow the same proof as the one of Theorem~\ref{bi} but  this time with a random $L$ (cf. \cite{Ingster00}). 
We choose $[s_{*},s^{*}]$ as an interval included in $\mathcal{S}\cap [1,\infty)$. Let $c_{0}=2\log(2)(2\nu+2s^{*}+1)^2$
and $k_N=\lfloor c_{0}^{-1} (s^{*}-s_{*})\log N \rfloor$. Then it is possible to choose $k_N$ elements of $\mathcal{S}$: $s_1<\dots <s_{k_N}$ such that 
$s_{j+1}-s_j\geq c_{0}/\log N$. Now we set, for $1\leq j \leq k_{N}$, 
$$J_{j}=\left\lfloor \frac{\log (N/\sqrt{\log\log N})}{(\log 2) (2\nu+2s_{j}+1)} \right\rfloor$$
which verifies  $J_1>\dots >J_{k_N}>1$ for $N$ large enough.
This choice ensures that $2^{J_j(2\nu+2s_j+1)}$ is of order $ N/\sqrt{\log\log N}$. 
%such that for all $1\leq j\leq k_N$, $N/\sqrt{\log(k_N)}=c_22^{J_j(2\nu+2s_j+1)}$.
We also define $\gamma_j^2=c_12^{-2J_j(\nu+s_j+1)}$.
We consider hypothesis functions  
$$f_\theta=f_0+\sum_{L}\sum_{l=L/2}^{L-1}\sum_{m=-l}^l\theta_{Llm}\varphi_{lm}$$
and we take a prior of the form 
$\mu={k_N}^{-1}\sum_{j=1}^{k_N}\mu_j$. Then  $\theta$ is randomly chosen such that 
$\mu_j(\theta_{Llm}=\pm \gamma_j)=1/2$ if $L=2^{J_j}, 2^{J_j-1}\leq l<2^{J_j}$, $-l\leq m\leq l$
and $\mu_j(\theta_{Llm}=0)=1$ otherwise.
This means that $L$ is fixed equal to $2^{J_j}$ with probability $1/k_N$ and random densities with respect to the measures $\mu_j$ have the following form
$$f_\theta=f_0+\sum_{l=L/2}^{L-1}\sum_{m=-l}^l\theta_{lm}\varphi_{lm}$$
where $\mu_j(\theta_{lm}=\pm \gamma_j)=1/2.$ 

Given the proof of Theorem~\ref{bi}, we easily verify that $\mu_j$-a.s. $f_\theta\in H_1(s_j,R,{\cal C}\psi_N^{ad}(s_j))$ if $c_1$ is chosen small enough. Now, 
since $\sup_{s\in\mathcal{S}}\sup_{f\in H_1(s,R, {\cal C}\psi_N^{ad}(s))}\P_f(\Delta_N=0)\geq \P_\mu(\Delta_N=0)$ and according to \eqref{lb}, it is sufficient to bound the  $\chi^2$-divergence.
So we will show that, for all $0<\eta <1$,
$$\limsup_N\E_{f_0}\left(\left(\frac{d\P_{\mu}}{d\P_{f_0}}\right)^2\right)\leq 1+(1-\eta)^2$$
which comes back to
$$ \limsup_N\frac1{k_N^2}\sum_{p,q=1}^{k_N}\E_{f_0}\left(\frac{d\P_{\mu_p}}{d\P_{f_0}}\frac{d\P_{\mu_q}}{d\P_{f_0}}\right)\leq 1+(1-\eta)^2.$$
Using Fubini's Theorem and independence of the $Z_i$'s, 
\begin{eqnarray*}
 \E_{f_0}\left(\frac{d\P_{\mu_p}}{d\P_{f_0}}\frac{d\P_{\mu_q}}{d\P_{f_0}}\right)
&=&\E_{\mu_p\times\mu_q}\left(\E_{f_0}\prod_{i=1}^Np_\theta(Z_i) p_{\theta'}(Z_i)\right)
=\E_{\mu_p\times\mu_q}\left(\left(\int 4\pi p_\theta p_{\theta'}\right)^N\right).
\end{eqnarray*}
Denoting $a_l(L_1,L_2)=4\pi\1_{L_1/2\leq l< L_1}\1_{L_2/2\leq l< L_2}$  we can write
$$ \int 4\pi p_\theta p_{\theta'}=1+\sum_{L_1,L_2, l, m}%\sum_{l\geq 0,|m|\leq l}
\theta_{Llm}\theta'_{Llm}a_l(L_1,L_2)$$
where the sum is over $L_{1}\geq 0, L_{2}\geq 0, l\geq 0, |m|\leq l.$
Thus 
$$\left(4\pi\int p_\theta p_{\theta'}\right)^N\leq \exp\left(N\sum_{L_1L_2lm}\theta_{Llm}\theta'_{Llm}a_l(L_1,L_2)\right)
\leq \prod_{L_1L_2lm}\exp(N\theta_{Llm}\theta'_{Llm}a_l(L_1,L_2)).$$
Using the distribution of $(\theta, \theta')$, we obtain
\begin{eqnarray*}
\E_{f_0}\left(\frac{d\P_{\mu_p}}{d\P_{f_0}}\frac{d\P_{\mu_q}}{d\P_{f_0}}\right)
&\leq& \prod_{l\geq 0,|m|\leq l}\frac12\exp(N\gamma_p\gamma_q a_l(2^{J_p},2^{J_q}))+\frac12\exp(-N\gamma_p\gamma_qa_l(2^{J_p},2^{J_q}))
\\ &\leq &\prod_{l\geq 0,|m|\leq l}\cosh(N\gamma_p\gamma_qa_l(2^{J_p},2^{J_q})).
\end{eqnarray*}
Now, using $\cosh(2x)=1+2\sinh^2(x)$, inequality $1+a\leq e^a$ and $\sinh(x)=x+o(x)$
\begin{eqnarray*}
 \E_{f_0}\left(\frac{d\P_{\mu_p}}{d\P_{f_0}}\frac{d\P_{\mu_q}}{d\P_{f_0}}\right)
\leq \exp(2\sum_{l\geq 0,|m|\leq l}\sinh^2(N\gamma_p\gamma_qa_l(2^{J_p},2^{J_q})/2))
\leq \exp(C_1N^2\gamma_p^2\gamma_q^2\sum_{l\geq 0,|m|\leq l}|a_l(2^{J_p},2^{J_q})|^2).
\end{eqnarray*}
We observe that $a_l(2^{J_p},2^{J_q})=0$ as soon as $J_p\neq J_q$. 
That yields 
\begin{eqnarray*}
 \E_{f_0}\left(\frac{d\P_{\mu_p}}{d\P_{f_0}}\frac{d\P_{\mu_q}}{d\P_{f_0}}\right)
\leq \exp(C_2N^2\gamma_p^42^{2J_p}\1_{p=q}).
\end{eqnarray*}
Then 
\begin{eqnarray*}
 \frac1{k_N^2}\sum_{p,q=1}^{k_N}\E_{f_0}\left(\frac{d\P_{\mu_p}}{d\P_{f_0}}\frac{d\P_{\mu_q}}{d\P_{f_0}}\right)
 &\leq&\frac1{k_N^2}\sum_{p=1}^{k_N}\exp(C_2N^2\gamma_p^4 2^{2J_p})+\frac1{k_N^2}\sum_{p\neq q}1\\
&\leq &\frac1{k_N^2}\sum_{p=1}^{k_N}\exp(C_2c_1^2N^{2}2^{-2J_{p}(2\nu+2s_{p}+1)})+1-\frac1{k_{N}}\\
&\leq &\frac1{k_N}\exp(C_2c_1^22^{4\nu+4s^{*}+2} \log\log N)+1
\leq \frac{(\log N)^{C_3(\nu, s^*)c_1^2}}{k_N}+1
%c_2^2\log(k_N))\leq k_N^{C_2c_1^2c_2^2-1}
\end{eqnarray*}
which is bounded by $1+(1-\eta)^2$ for $N$ large enough if we choose $c_1$ small enough (smaller than $1/\sqrt{C_3}$). Since the bound is true for all $0<\eta<1$ and $c_1$ is chosen independently of $\eta$, this gives the result.

%%%%%%%%%%%%%%%%%%%%%%%%%%
% Preuve du Lemme 1
%%%%%%%%%%%%%%%%%%%%%%%%%%

\subsection{Proof of Lemma~\ref{inegexp} }

We recall the result from \cite{ginelatalazinn}.

\begin{Lemma}\label{gine}
 Let $u$ denote a bounded canonical kernel, completely degenerate of the i.i.d. variables $Z_1,\dots, Z_N$. There exist universal constants $K_1, K_2>0$ such that, for all $x>0$, 
 $$\P\left(\left|\sum_{1\leq i_1\neq i_2\leq N}u(Z_{i_1}, Z_{i_2})\right|\geq x\right)\leq K_1\exp\left(-K_2\min\left(\frac{x^2}{C^2}, \frac{x}{D}, \frac{x^{2/3}}{B^{2/3}}, \frac{x^{1/2}}{A^{1/2}}\right)\right)$$
 where $A,B,C,D$ are defined by 
 $$A=\|u\|_\infty,\qquad B^2=N \|E(|u|^2(Z,.)\|_\infty,\qquad C^2=N^2\E[|u|^2(Z_1,Z_2)]$$ and 
 $$D=N\sup\left\{\big|\E[u(Z_1,Z_2)u_1(Z_1)u_2(Z_2)]\big|,\E[u_1^2(Z)]\leq 1,\E[u_2^2(Z)]\leq 1\right\}.$$
\end{Lemma}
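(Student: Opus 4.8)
The plan is to deduce the tail bound from a sharp $L_p$-moment estimate for the degenerate U-statistic $S=\sum_{1\le i_1\neq i_2\le N}u(Z_{i_1},Z_{i_2})$, and then to optimize over $p$. Concretely, I would first establish a moment inequality of the form
$$\|S\|_p\le \kappa\left(\sqrt{p}\,C+pD+p^{3/2}B+p^2A\right)$$
valid for every $p\ge 2$, with $\kappa$ a universal constant and $A,B,C,D$ the four quantities in the statement. Once this is in hand, Markov's inequality gives $\P(|S|\ge x)\le(\kappa/x)^p(\sqrt p\,C+pD+p^{3/2}B+p^2A)^p$, and choosing $p$ so as to balance $x$ against whichever of the four terms dominates produces exactly the four competing exponents $x^2/C^2$, $x/D$, $x^{2/3}/B^{2/3}$ and $x^{1/2}/A^{1/2}$. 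Taking the minimum over these regimes yields the claimed bound, with $K_2$ absorbing $\kappa$ and the prefactor $K_1$ accounting for the passage from the optimal-$p$ value to a continuous exponent.

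The moment inequality itself I would obtain by the classical three-step reduction for U-statistics. First, \emph{decouple}: by the de la Pe\~na--Montgomery-Smith decoupling inequality for order-two U-statistics, $\|S\|_p$ is comparable, up to a universal constant depending only on the order, to $\|\sum_{i,j}u(Z_i,Z_j')\|_p$, where $(Z_j')$ is an independent copy of the sample; this severs the dependence between the two indices. Second, \emph{symmetrize}: inserting independent Rademacher multipliers $(\varepsilon_i)$ and $(\varepsilon_j')$ and exploiting the complete degeneracy (vanishing conditional means) of $u$, one passes, again up to universal constants, to the Rademacher chaos $\sum_{i,j}\varepsilon_i\varepsilon_j'\,u(Z_i,Z_j')$. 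Third, \emph{condition} on the sample: writing $a_{ij}=u(Z_i,Z_j')$, the conditional object is a decoupled Rademacher chaos of order two attached to the matrix $(a_{ij})$.

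The core of the argument is the moment bound for this conditional chaos. Here I would invoke a Lata\l a-type estimate for order-two chaoses, which controls $\|\sum_{i,j}\varepsilon_i\varepsilon_j'a_{ij}\|_p$ by a sum of four matrix functionals: the Hilbert--Schmidt norm $(\sum_{i,j}a_{ij}^2)^{1/2}$ weighted by $\sqrt p$, the operator norm of $(a_{ij})$ weighted by $p$, the maximal row/column $\ell_2$-norm weighted by $p^{3/2}$, and the entrywise supremum $\max_{i,j}|a_{ij}|$ weighted by $p^2$. Taking expectations over the sample then converts these random functionals into the deterministic quantities of the statement: $\E\sum_{i,j}a_{ij}^2=N^2\,\E|u(Z_1,Z_2)|^2=C^2$ once the diagonal is controlled; the entrywise supremum is bounded by $A=\|u\|_\infty$; the maximal row norm is governed by $B^2=N\|\E(|u|^2(Z,\cdot))\|_\infty$; and---this is the delicate identification---the expected operator norm is matched to $D$, since the definition of $D$ as a supremum of $\E[u(Z_1,Z_2)u_1(Z_1)u_2(Z_2)]$ over $L_2$-unit test functions $u_1,u_2$ is precisely $N$ times the $L_2\to L_2$ operator norm of the kernel.

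The step I expect to be the main obstacle is this last passage from the random operator norm of $(a_{ij})$ to the deterministic weak-variance quantity $D$, together with the companion control of the maximal-row functional by $B$. These demand that the empirical operator and row norms concentrate around their kernel-level counterparts, and it is exactly here that the contraction and hypercontractivity estimates for Rademacher chaoses must be wielded with care; by contrast, the Gaussian term $C$ and the supremum term $A$ follow from elementary second-moment and boundedness arguments and present no genuine difficulty.
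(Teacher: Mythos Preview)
The paper does not prove this lemma at all: it is stated verbatim as a quotation from Gin\'e, Lata{\l}a and Zinn (the line introducing it reads ``We recall the result from \cite{ginelatalazinn}''), and the authors immediately proceed to apply it to their specific kernel. So there is no ``paper's own proof'' to compare against; the lemma functions here as a black-box tool.

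That said, your sketch is a faithful outline of how the Gin\'e--Lata{\l}a--Zinn result is actually established: decoupling \`a la de la Pe\~na--Montgomery-Smith, Rademacher symmetrization exploiting complete degeneracy, and then Lata{\l}a's sharp moment estimate for order-two chaos in terms of the four matrix functionals (Hilbert--Schmidt, operator, maximal row, entrywise sup), followed by Markov plus optimization over $p$. Your identification of $C$, $A$, $B$, $D$ with those four functionals is correct, and you are right that the passage from the random operator norm of $(a_{ij})$ to the deterministic weak-variance $D$ is where most of the work sits. If you want to pursue this, the Gin\'e--Lata{\l}a--Zinn paper handles that step not by concentration of the empirical operator norm around $D$ as you suggest, but rather by working directly at the moment level and using that the operator-norm functional in Lata{\l}a's bound can be replaced by its expectation up to the other three terms; this avoids the concentration argument you flag as the obstacle.
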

We apply this Lemma to the  kernel 
$$u(x,y)=\sum_{l=1}^L\sum_{m=-l}^l\Phi_{lm}(x)\overline{\Phi_{lm}(y)}.$$
which is degenerate for $Z_i$ under $H_0$.
As one may have noticed, we stated the lemma above with a kernel $u$ taking complex values. Normally, the result of \cite{ginelatalazinn} was stated for real valued kernel. But their result can be extended to complex valued kernel by simply separating the real and imaginary parts as shown below. Indeed if we denote $u_R$ and $u_I$ the real and imaginary part of $u$ it follows that
\begin{eqnarray*}
&&\P\left(\left|\sum_{1\leq i_1\neq i_2\leq N}u(Z_{i_1}, Z_{i_2})\right|\geq x\right) = 
\P\left(\left|\sum_{1\leq i_1\neq i_2\leq N}u_R(Z_{i_1}, Z_{i_2})\right |^2 +\left |  \sum_{1\leq i_1\neq i_2\leq N}u_I(Z_{i_1}, Z_{i_2}) \right|^2\geq x^2\right) \\
&&\leq   \P\left(\left|\sum_{1\leq i_1\neq i_2\leq N}u_R(Z_{i_1}, Z_{i_2})\right |^2 \geq \frac{x^2}{2} \right) + \P \left( \left |  \sum_{1\leq i_1\neq i_2\leq N}u_I(Z_{i_1}, Z_{i_2}) \right|^2\geq \frac{x^2}{2}\right).
\end{eqnarray*}
Hence, it amounts to a real valued problem. 
We only deal with the real part since exactly the same arguments remain true for the imaginary part.  Let us show now that the bounds $A, B, C, D$ of Lemma~\ref{gine} hold for the real part $\P\left(\left|\sum_{1\leq i_1\neq i_2\leq N}u_R(Z_{i_1}, Z_{i_2})\right |^2 \geq{x^2}/{2} \right)$ . Because $u_R^2+u_I^2=|u|^2$, we have $u_R\leq |u_R| \leq |u|$.  Then 
\begin{eqnarray*}
\|u_R\|_{\infty} \leq \| u\|_{\infty} &\leq& A\\
N\| \E((u_R)^2(Z,.))\|_\infty  \leq N \|\E(|u|^2(Z,.))\|_\infty &\leq& B^2 \\
N^2\E[(u_R)^2(Z_1,Z_2)] \leq  N^2 \E[|u|^2(Z_1,Z_2)]&\leq& C^2.
%\\N\sup\left\{ \E (u_R(Z_1,Z_2)u_1(Z_1)u_2(Z_2) )\right\} &\leq& D
\end{eqnarray*}
As for the last term $D$, since $u_1$ and $u_2$ are real valued
$$\left | \E (u(Z_1,Z_2)u_1(Z_1)u_2(Z_2) ) \right |^2=
 \left | \E (u_R(Z_1,Z_2)u_1(Z_1)u_2(Z_2) ) \right |^2  + \left | \E (u_I(Z_1,Z_2)u_1(Z_1)u_2(Z_2) )\right |^2 $$
it entails that 
$$ N\sup\left\{ \E (u_R(Z_1,Z_2)u_1(Z_1)u_2(Z_2) )\right\} 
 \leq N\sup\left\{ |\E (u(Z_1,Z_2)u_1(Z_1)u_2(Z_2) )|\right\} 
 \leq D$$
which concludes the justification our lemma. 
\\

Let us compute now the four bounds, $A,B, C, D$.\\

$\triangleright$ {\it Computation of $A$} \\
Denoting by $Y^l(x)$ the vector $(Y_m^l(x))_{-l\leq m\leq l}$ and using algebraic properties of the spherical harmonics,
\begin{equation}\label{aa}
 \sum_{m=-l}^l|\Phi_{lm}(x)|^2=\sum_{m=-l}^l|(f^{\star l}_{\varepsilon^{-1}}Y^l(x))_m|^2\leq \| f^{\star l}_{\varepsilon^{-1}}\|_{op}^2\sum_{m=-l}^l|Y_m^l(x)|^2
\leq d_0^{-2}l^{2\nu}\frac{2l+1}{4\pi}.
\end{equation}
We deduce  that
for all $x,y\in\mathbb{S}^2$,
$$|u(x,y)|\leq \sum_{l=1}^L\left(\sum_{m=-l}^l|\Phi_{lm}(x)|^2\sum_{m=-l}^l|\Phi_{lm}(y)|^2\right)^{1/2}
\leq \sum_{l=1}^L \pi^{-1} d_0^{-2}l^{2\nu+1}$$
so that $A\leq(\pi^{-1} d_0^{-2}/(2\nu+2))(L+1)^{2\nu+2}$.\\

$\triangleright$  {\it Computation of $C$} \\
We state the following Lemma, which allows to control the order of the variance of the test statistic.

\begin{Lemma}\label{ordrevar} Under Assumption 1, denoting $c_3=3d_0^{-4}2^{4\nu +2}/(4\nu +2)$
\begin{eqnarray*}\sum_{l_1,l_2=1}^L \sum_{m_1=-l_1}^{l_1}\sum_{m_2=-l_2}^{l_2}|\E_{f_0} (\Phi_{l_1m_1}(Z)\overline{\Phi_{l_2m_2}(Z)})|^2\leq c_3L^{4\nu+2}\\
\text{ and }\quad \sum_{l_1,l_2=1}^L \sum_{m_1=-l_1}^{l_1}\sum_{m_2=-l_2}^{l_2}|\E_{f_0} (\Phi_{l_1m_1}(Z)\Phi_{l_2m_2}(Z))|^2\leq c_3L^{4\nu+2}. 
\end{eqnarray*}
\end{Lemma}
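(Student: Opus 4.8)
The plan is to prove Lemma~\ref{ordrevar} by expanding $\Phi_{lm}$ in the spherical harmonic basis, computing the expectations under $H_0$ explicitly in terms of Fourier coefficients of $f_Z = f_\varepsilon * f_0 = f_0$, and then bounding the resulting matrix norms using Assumption~1. First I would write $\Phi_{l_im_i}(x) = \sum_{n_i} (f^{\star l_i}_{\varepsilon^{-1}})_{m_in_i}\overline{Y^{l_i}_{n_i}(x)}$, so that
$$\E_{f_0}(\Phi_{l_1m_1}(Z)\overline{\Phi_{l_2m_2}(Z)}) = \sum_{n_1,n_2} (f^{\star l_1}_{\varepsilon^{-1}})_{m_1n_1}\overline{(f^{\star l_2}_{\varepsilon^{-1}})_{m_2n_2}}\,\E_{f_0}(\overline{Y^{l_1}_{n_1}(Z)}Y^{l_2}_{n_2}(Z)).$$
Under $H_0$ the density of $Z$ is $f_0 = (4\pi)^{-1}$ (the uniform density is fixed by rotation), so $\E_{f_0}(\overline{Y^{l_1}_{n_1}(Z)}Y^{l_2}_{n_2}(Z)) = (4\pi)^{-1}\int_{\S^2}\overline{Y^{l_1}_{n_1}}Y^{l_2}_{n_2} = (4\pi)^{-1}\delta_{l_1l_2}\delta_{n_1n_2}$ by orthonormality. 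Hence the whole expression collapses to a diagonal-in-$l$ quantity: it vanishes unless $l_1=l_2=:l$, and then it equals $(4\pi)^{-1}\sum_{n}(f^{\star l}_{\varepsilon^{-1}})_{m_1n}\overline{(f^{\star l}_{\varepsilon^{-1}})_{m_2n}} = (4\pi)^{-1}(f^{\star l}_{\varepsilon^{-1}}(f^{\star l}_{\varepsilon^{-1}})^*)_{m_1m_2}$.

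Next I would sum the squared moduli over $m_1,m_2$ for fixed $l$. That sum is $(4\pi)^{-2}\|f^{\star l}_{\varepsilon^{-1}}(f^{\star l}_{\varepsilon^{-1}})^*\|_{HS}^2$, the squared Hilbert-Schmidt norm of an $(2l+1)\times(2l+1)$ matrix. Bounding $\|M\|_{HS}^2 \le (2l+1)\|M\|_{op}^2$ with $M = f^{\star l}_{\varepsilon^{-1}}(f^{\star l}_{\varepsilon^{-1}})^*$ and $\|M\|_{op} = \|f^{\star l}_{\varepsilon^{-1}}\|_{op}^2 \le d_0^{-2}l^{2\nu}$ from Assumption~1, I get that the inner double sum over $(m_1,m_2)$ is at most $(4\pi)^{-2}(2l+1)d_0^{-4}l^{4\nu} \le (4\pi)^{-2}3l^{4\nu+1}d_0^{-4}$ (using $2l+1 \le 3l$ for $l\ge 1$). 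Summing over $l$ from $1$ to $L$ and using $\sum_{l=1}^L l^{4\nu+1} \le L^{4\nu+2}/(4\nu+2)$ gives the bound $c_3 L^{4\nu+2}$ with $c_3$ a constant of the stated form (the factor $2^{4\nu+2}$ in the paper's $c_3$ presumably absorbs a slightly different but harmless grouping of constants, e.g.\ replacing $(4\pi)^{-2}\cdot 3$ by a looser bound). For the second inequality, the only change is that $\E_{f_0}(\Phi_{l_1m_1}(Z)\Phi_{l_2m_2}(Z))$ involves $\E_{f_0}(\overline{Y^{l_1}_{n_1}(Z)}\,\overline{Y^{l_2}_{n_2}(Z)})$; using \eqref{harmonique_negative} to write $\overline{Y^l_n} = (-1)^n Y^l_{-n}$, this is again $(4\pi)^{-1}$ times an orthogonality relation, forcing $l_1=l_2$ and $n_1=-n_2$, and the resulting matrix is $f^{\star l}_{\varepsilon^{-1}} J (f^{\star l}_{\varepsilon^{-1}})^\top$ for a signed permutation $J$ with $\|J\|_{op}=1$, so the identical operator-norm bound applies.

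The computation is essentially routine once the orthogonality collapse is in place; the only point requiring a little care — and the main conceptual step — is recognizing that under $H_0$ one has $f_Z = f_0$ exactly (rotating a uniform distribution leaves it uniform, equivalently $f^{\star l}_\varepsilon f_0^{\star l} = 0$ for $l \ge 1$ since $f_0^{\star l}=0$ there), which is what makes the cross-expectations exactly diagonal rather than merely approximately so. The rest is bookkeeping: converting Hilbert-Schmidt norms to operator norms at the cost of a dimension factor $2l+1$, invoking Assumption~1, and summing a power of $l$. I would also note that the boundedness of $\Phi_{lm}$ (hence of the kernel) needed to apply Lemma~\ref{gine} is already recorded in \eqref{aa}, so no extra work is needed there.
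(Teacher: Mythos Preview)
Your approach is essentially identical to the paper's: both expand $\Phi_{lm}$ in spherical harmonics, use orthonormality under the uniform law to force $l_1=l_2$, bound the resulting $(m_1,m_2)$-sum by $(2l+1)\|f^{\star l}_{\varepsilon^{-1}}\|_{op}^4$ (the paper does this entrywise rather than via the Hilbert--Schmidt/operator norm inequality, but the bound is the same), invoke Assumption~1, and sum in $l$; the second inequality is handled in both cases via \eqref{harmonique_negative} and a signed permutation with unit operator norm.

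One small slip to correct: the inequality $\sum_{l=1}^L l^{4\nu+1} \le L^{4\nu+2}/(4\nu+2)$ is false (the sum \emph{dominates} $\int_0^L x^{4\nu+1}\,dx$). The right estimate is $\sum_{l=1}^L l^{4\nu+1} \le (L+1)^{4\nu+2}/(4\nu+2) \le (2L)^{4\nu+2}/(4\nu+2)$, and \emph{this} is precisely the origin of the factor $2^{4\nu+2}$ in $c_3$, not an absorption of your $(4\pi)^{-2}$. (The paper in fact drops the $(4\pi)^{-1}$ from the expectation under $f_0$; this only loosens the upper bound, so your sharper version is fine.)
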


\begin{proof}
 Under $H_0$, the $Z_i$ are uniformly distributed on the sphere. Then 
\begin{eqnarray*}
\E_{f_0}( \Phi_{l_1m_1}(Z)\overline{\Phi_{l_2m_2}(Z)})&=&\int \Phi_{l_1m_1}(z)\overline{\Phi_{l_2m_2}(z)}dz\\
&=&\sum_{n_1=-l_1}^{l_1}\sum_{n_2=-l_2}^{l_2}(f^{\star l_1}_{\varepsilon^{-1}})_{m_1n_1}(\overline{f^{\star l_2}_{\varepsilon^{-1}}})_{m_2n_2}\int \overline{Y^{l_1}_{n_1}(z)}Y^{l_2}_{n_2}(z)dz\\
&=&\sum_{n=-l_1}^{l_1}(f^{\star l_1}_{\varepsilon^{-1}})_{m_1,n}(\overline{f^{\star l_1}_{\varepsilon^{-1}}})_{m_2,n}\1_{l_1=l_2}.\\
\end{eqnarray*}
But, for any matrices $A=(a_{mn})_{-l\leq m\leq l,-l\leq n\leq l}$, $B=(b_{mn})_{-l\leq m\leq l,-l\leq n\leq l}$
$$\sum_{m_1=-l}^{l}|\sum_{n=-l}^{l} a_{m_1 n}b_{m_2 n}|^2\leq 
\| A\|_{op} ^2\sum_{n=-l}^{l}|b_{m_2 n}|^2\leq \| A\|_{op} ^2
\| B^T \|_{op} ^2=\| A\|_{op} ^2\| B \|_{op} ^2$$
Then 
\begin{equation}\label{ab}
 \sum_{m_1=-l_1}^{l_1}\sum_{m_2=-l_2}^{l_2}|\E_{f_0}( \Phi_{l_1m_1}(Z) \overline{\Phi_{l_2m_2}(Z)})|^2
\leq \| f^{\star l_1}_{\varepsilon^{-1}} \|_{op}^4 (2l_1+1)\1_{l_1=l_2}
\end{equation}
and 
\begin{eqnarray*}
\sum_{l_1, l_2=1}^L \sum_{m_1=-l_1}^{l_1}\sum_{m_2=-l_2}^{l_2}|\E_{f_0}( \Phi_{l_1m_1}(Z) \overline{\Phi_{l_2m_2}(Z)})|^2&\leq &\sum_{l_1=1}^L\| f^{\star l_1}_{\varepsilon^{-1}} \|_{op}^4 (2l_1+1)
\\ &\leq &3d_0^{-4} \sum_{l_1=1}^Ll_1^{4\nu +1}
\leq \frac{3d_0^{-4}}{4\nu +2}(L+1)^{4\nu +2}.
\end{eqnarray*}

For the second term, we can write, using \eqref{harmonique_negative}
\begin{eqnarray*}
\E_{f_0}( \Phi_{l_1m_1}(Z)\Phi_{l_2m_2}(Z))
&=&\sum_{n=-l_1}^{l_1}(f^{\star l_1}_{\varepsilon^{-1}})_{m_1,n}(f^{\star l_1}_{\varepsilon^{-1}})_{m_2,-n}(-1)^n\1_{l_1=l_2}.\\
\end{eqnarray*}
Then it is sufficient to apply the same method with matrix $B$ such that $b_{mn}=(-1)^n(f^{\star l_1}_{\varepsilon^{-1}})_{m,-n}=(-1)^na_{m,-n}$. The conclusion  
results from equality $\|B\|_{op}=\|A\|_{op}$.  
\end{proof}

Lemma \ref{ordrevar} gives $C^2\leq{3d_0^{-4}}/{(4\nu +2)} N^2(L+1)^{4\nu +2}$.\\

$\triangleright$  {\it Computation of $B$} \\
Let $x\in \mathbb{S}^2$ . We can write
\begin{eqnarray*}
 \E_{f_0}[|u(Z,x)|^2]=\sum_{l_1,l_2}\sum_{m_1,m_2} \E_{f_0}[ \Phi_{l_1m_1}(Z)\overline{\Phi_{l_2m_2}
(Z)}] \Phi_{l_2m_2}(x)\overline{\Phi_{l_1m_1}(x)}
\end{eqnarray*}
where the sum is over $ 1\leq l_{1}, l_{2}\leq L, |m_{1}|\leq l, |m_{2}|\leq l$.
But we have seen previously than $\E_{f_0}[ \Phi_{l_1m_1}(Z)\overline{\Phi_{l_2m_2}
(Z)}]$ vanishes when $l_1\neq l_2$. Then, using Cauchy-Schwarz inequality, we compute
\begin{eqnarray*}
 \E_{f_0}[|u(Z,x)|^2]\leq \sum_{l=1}^{L}\left(\sum_{-l\leq m_1,m_2 \leq l} |\E_{f_0}[ \Phi_{l m_1}(Z)\overline{\Phi_{l m_2}(Z)}]|^2 
 \sum_{-l\leq m_1,m_2\leq l}|\Phi_{l m_1}(x)\Phi_{l m_2}(x)|^2\right)^{1/2}.
\end{eqnarray*}
Now, we use previous computations \eqref{ab} and \eqref{aa} to state
\begin{eqnarray*}
 \E_{f_0}[u^2(Z,x)]&\leq &\sum_{l=1}^{L}\left(\| f^{\star l_1}_{\varepsilon^{-1}} \|_{op}^4 (2l+1)\right)^{1/2} \sum_{m=-l}^{l}|\Phi_{lm}(x)|^2\\
 &\leq &\sum_{l=1}^{L}\left(3d_0^{-4}l^{4\nu+1}\right)^{1/2} d_0^{-2}l^{2\nu}\frac{2l+1}{4\pi}\\
& \leq &\sum_{l=1}^L\frac{\sqrt{3}}{\pi}d_0^{-4} l^{4\nu+3/2}
\leq \frac{\sqrt{3}d_0^{-4}}{\pi(4\nu+5/2)}(L+1)^{4\nu +5/2}.
\end{eqnarray*}
Thus $B^2\leq\sqrt{3}\pi^{-1}d_0^{-4}/(4\nu+5/2)N(L+1)^{4\nu +5/2}$.\\

$\triangleright$  {\it Computation of $D$} \\
Let us first compute $\E_{f_0}( \Phi_{lm}(Z_1) u_1 (Z_1))$ under $H_0$. We denote by $U_1^l$ the vector of the Fourier coefficients of $u_1$ with harmonic order $l$:  $U_1^l=(<u_1,Y_n^l>)_{-l\leq n\leq l}$. 
\begin{eqnarray*}
 \E_{f_0}( \Phi_{lm}(Z_1) u_1 (Z_1))&=&\int \Phi_{lm}(x) u_1 (x) dx
 =\sum_{n=-l}^l (f^{\star l}_{\varepsilon^{-1}})_{mn}\int \overline{Y^l_n(x)}u_1(x)dx
 \\ &=&\sum_{n=-l}^l (f^{\star l}_{\varepsilon^{-1}})_{mn}<u_1,Y_n^l>
 =(f^{\star l}_{\varepsilon^{-1}}U_1^l)_m.
\end{eqnarray*}
Then 
$$\sum_{m=-l}^l| \E_{f_0}( \Phi_{lm}(Z_1) u_1 (Z_1))|^2=\|f^{\star l}_{\varepsilon^{-1}}U_1^l\|^2
\leq d_0^{-2}l^{2\nu}\|U_1^l\|^2.$$
But, using Parseval's equality
$$\sum_{l\geq 0}\|U_1^l\|^2= \sum_{l\geq 0}\sum_{n=-l}^l|<u_1,Y_n^l>|^2=\int u_1^2(x)dx$$
so that, under $H_0$, $\sum_{l}\|U_1^l\|^2\leq\E_{f_0}(u_1^2(Z_1))$.
In the same way we can prove 
$$\sum_{m=-l}^l|\E_{f_0}(\overline{ \Phi_{lm}(Z_2) }u_2 (Z_2))|^2\leq d_0^{-2}l^{2\nu}\|U_2^l\|^2$$ 
with $\sum_{l}\|U_2^l\|^2\leq\E_{f_0}(u_2^2(Z_1))$. 
Then, using repeatedly Cauchy-Schwarz inequality,
\begin{eqnarray*}
 \E_{f_0}(u(Z_1, Z_2)u_1(Z_1)u_2(Z_2))&=&
 \sum_{l=1}^L\sum_{m=-l}^l\E_{f_0}(\Phi_{lm}(Z_1)u_1(Z_1))\E_{f_0}(\overline{\Phi_{lm}(Z_2)}u_2(Z_2))\\
 &\leq &\sum_{l=1}^L\left(\sum_{m=-l}^l| \E_{f_0}( \Phi_{lm}(Z_1) u_1 (Z_1))|^2\sum_{m=-l}^l|\E_{f_0}( \overline{\Phi_{lm}(Z_2)} u_2 (Z_2))|^2\right)^{1/2}
\\
&\leq &\sum_{l=1}^Ld_0^{-2}l^{2\nu}\|U_1^l\|\|U_2^l\|
\leq d_0^{-2}L^{2\nu}\left(\sum_{l=1}^L\|U_1^l\|^2\sum_{l=1}^L\|U_2^l\|^2\right)^{1/2}\\
&\leq &d_0^{-2}L^{2\nu}\E_{f_0}^{1/2}(u_1^2(Z_1))\E_{f_0}^{1/2}(u_2^2(Z_2)).
% &\leq &\frac{d_0^{-2}}{2\nu +1} (L+1)^{2\nu+1}\E^{1/2}(u_1^2(Z_1))\E^{1/2}(u_2^2(Z_2))
\end{eqnarray*}
Thus $D\leq d_0^{-2}NL^{2\nu}$.\\

\noindent {\it Conclusion}\\
Now, using Lemma~\ref{gine} with $x=N(N-1) t/2$, we obtain
$$P\left(\left|T_L\right|\geq t\right)\leq K_1\exp\left(-K_3\min\left(\frac{N^2t^2}{L^{4\nu + 2}},\frac{Nt}{L^{2\nu}}, \frac{Nt^{2/3}}{L^{4\nu/3+5/6}}, \frac{Nt^{1/2}}{L^{\nu +1}}\right)\right)$$
where $K_3$ only depends on $d_0$ and $\nu$. Then 
\begin{eqnarray*}
 P\left(\left|T_L\right|\geq L^{2\nu+1}u_N/N\right)&\leq &K_1\exp\left(-K_3\min\left(u_N^2,u_N L, N^{1/3}u_N^{2/3} L^{-1/6}, N^{1/2}u_N^{1/2}L^{-1/2}\right) \right)\\
&\leq & K_1\exp(-K_0 u_N^2)
\end{eqnarray*}
provided that   $u_N=O(L)$, $L=O(N^2u_N^{-8})$ and $L=O(Nu_N^{-3})$.

\subsection{Proof of Theorem~\ref{bs}}

As in \cite{butuceatribouley}, we first use Lemma~\ref{inegexp}:
\begin{eqnarray*}
 \P_{f_0}(D_N=1)&\leq &\sum_{L\in\mathcal{L}}\P_{f_0}\left(|T_L|>\sqrt{2K_0^{-1}}t_L^2\right)
 \leq \sum_{L\in\mathcal{L}}\P_{f_0}\left(|T_L|>L^{2\nu+1}\sqrt{2K_0^{-1}\log\log N}/N\right)\\
& \leq &\sum_{L\in\mathcal{L}} K_1\exp(-K_0(2K_0^{-1}\log\log N))
=K_1\sum_{L\in\mathcal{L}}\exp(-2\log\log N)\\
& \leq &K_2|\mathcal{L}|(\log(N))^{-2} =O(\log(N)^{-1})=o(1)
\end{eqnarray*}
since $|\mathcal{L}|=O(\log(N))$.
%Then, for $N$ large enough,$\P_{f_0}(D_N=1)\leq \eta/2.$

Now let $f\in H_1(s, R,\C\psi_N)$. Then 
$$\P_f(D_N=0)=\P_f\left(\forall L\in\mathcal{L},\; |T_L|\leq \sqrt{2K_0^{-1}} t_L^2\right)
\leq \P_f\left( |T_{L^*}|\leq \sqrt{2K_0^{-1}} {t}^2_{L^*}\right)$$
with $L^*=2^{j*}$ and $j*=\lfloor \log_2[(N/\sqrt{\log\log N})^{1/(2s+2\nu+1)}]\rfloor$. 
Remark that for $N$ large enough, \\
$4\log\log N\leq (N/\sqrt{\log\log N})^{1/(2s+2\nu+1)} \leq N/(\log\log N)^{3/2}$, so that 
$j_0\leq j*\leq j_m$ and $L^*$ belongs to $\mathcal{L}$. 
Note also that with this choice 
$t^2_{L^*}\leq \psi_N$ and $L^{*-2s}\leq 2^{2s}\psi_N$.
 Using triangle inequality we have that
 \begin{equation}\label{AvantMarkov}
\P_f\left( |T_{L^*}|\leq \sqrt{2K_0^{-1}} {t}^2_{L^*}\right) \leq \P_f\left( | T_{L^*} -\E_f(T_{L^*}) | \geq \| f-f_0 \|^2_2 -\sqrt{2K_0^{-1}}t^2_{L^*} - B_f(T_{L^*})\right)
\end{equation}
where $B_f(T_{L})=\| f-f_0 \|^2_2-\E_f(T_{L})$.
If $f$ is in the Sobolev ball $W_s(\S^2,R)$, it directly follows from the definition of $W_s(\S^2,R)$ (\ref{Sobolev}) that
\begin{equation*}
 B_f(T_{L^{*}})=\sum_{l>L^{*}}\sum_{m=-l}^{l} |f^{\star l}_{m}| ^2\leq ((4\pi)^{-1}+R^2)L^{*-2s}
 \leq ((4\pi)^{-1}+R^2)2^{2s}\psi_N.
\end{equation*}
We set $C_1=\sqrt{2K_0^{-1}}+((4\pi)^{-1}+R^2)2^{2s}$ and $C_2= 1-C_1/\C>0$. 
Using the definition of $H_1$
$$\psi_N\leq \C^{-1} \| f-f_0 \|^2_2 .$$
Markov inequality yields the following upperbound for the expression (\ref{AvantMarkov})
% \begin{equation}\label{bornerisque2}
%  \frac{ \tilde{C} (L^{*4\nu+2}+NL^{*-2s+2\nu+1}-NL^{*-4s+2}) }  {N^2(   Ct^2_{L^*} -\sqrt{2K_0^{-1}}t^2_{L^*} - Mt^2_{L^{*}})^2}.
%  \end{equation}
\begin{equation}\label{bornerisque2}
 \P_f(D_N=0) \leq  \frac{\var_f(T_{L^*}) }  {   C_2^2\|f-f_0\|^4}.
  \end{equation}
%
%We set $C_1=\sqrt{2K_0^{-1}}+M$. For $C > C_1$, $C_2= C- \sqrt{2K_0^{-1}}-M>0$. 
Let us now state the following Lemma which evaluates the variance of the estimator $T_L$. 

\begin{Lemma}\label{var}
If Assumption 1 is verified, $$\var_f(T_L)\leq
c_4\left(\frac{L^{4\nu +2}}{N^2}+\frac{\|f-f_0\|^2L^{4\nu +4}}{N^2}+\frac{\|f-f_0\|^2L^{2\nu +1}}{N}+\frac{\|f-f_0\|^3L^{2\nu +2}}{N}+\frac{\|f-f_0\|^4}{N}\right)$$
where $c_4$ only depends on $d_0, d_1$ and $\nu$. 
\end{Lemma}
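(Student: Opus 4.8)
The plan is to carry out the Hoeffding decomposition of the $U$-statistic $T_L$ and to estimate its linear and completely degenerate parts separately. Put $\mu_{lm}=\E_f(\Phi_{lm}(Z))=f^{\star l}_m$, $\xi_{lm}(z)=\Phi_{lm}(z)-\mu_{lm}$ (so $\E_f(\xi_{lm}(Z))=0$) and $S_{lm}=\sum_{i=1}^N\xi_{lm}(Z_i)$. Since $\overline{\Phi_{lm}}=(-1)^m\Phi_{l,-m}$ (a consequence of $f_\eps$ being a density together with \eqref{harmonique_negative}), the statistic $T_L$ is real and equals $\frac1{N(N-1)}\sum_{l=1}^L\sum_{m=-l}^l\big(\big|\sum_i\Phi_{lm}(Z_i)\big|^2-\sum_i|\Phi_{lm}(Z_i)|^2\big)$. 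Expanding around $\mu_{lm}$ and recalling that $\E_f(T_L)=\sum_{l=1}^L\sum_m|f^{\star l}_m|^2$ gives
$$T_L-\E_f(T_L)=\underbrace{\frac2N\sum_{l=1}^L\sum_{m=-l}^l\Re\!\big(\overline{\mu_{lm}}\,S_{lm}\big)}_{A_N}\;+\;\underbrace{\frac1{N(N-1)}\sum_{l=1}^L\sum_{m=-l}^l\sum_{i\neq j}\xi_{lm}(Z_i)\overline{\xi_{lm}(Z_j)}}_{B_N}.$$
Here $A_N=\frac2N\sum_i g(Z_i)$ with $g$ real-valued and $\E_f(g(Z))=0$, while $B_N$ is a completely degenerate $U$-statistic; since $\E_f(g(Z_i)\,\xi_{lm}(Z_j)\overline{\xi_{l'm'}(Z_k)})=0$ for $j\neq k$, the two parts are uncorrelated, so $\var_f(T_L)=\E_f(A_N^2)+\E_f(B_N^2)$, and it remains to bound each expectation.

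For the degenerate part, squaring and keeping only the index matchings $\{i,j\}=\{i',j'\}$ (all other terms vanish by degeneracy) yields
$$\E_f(B_N^2)=\frac1{N(N-1)}\sum_{l_1,m_1}\sum_{l_2,m_2}\Big(\big|\E_f(\xi_{l_1m_1}(Z)\overline{\xi_{l_2m_2}(Z)})\big|^2+\big|\E_f(\xi_{l_1m_1}(Z)\xi_{l_2m_2}(Z))\big|^2\Big),$$
while for the linear part $\E_f(A_N^2)=\frac4N\E_f(g(Z)^2)\le\frac4N\sum_{l_1,m_1}\sum_{l_2,m_2}\overline{\mu_{l_1m_1}}\mu_{l_2m_2}\,\E_f(\xi_{l_1m_1}(Z)\overline{\xi_{l_2m_2}(Z)})$. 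Both are then estimated by writing $f_Z=f_\eps*f=f_0+f_\eps*(f-f_0)$, so that
$$\E_f\big(\Phi_{l_1m_1}(Z)\overline{\Phi_{l_2m_2}(Z)}\big)=\E_{f_0}\big(\Phi_{l_1m_1}(Z)\overline{\Phi_{l_2m_2}(Z)}\big)+\int_{\S^2}\Phi_{l_1m_1}\overline{\Phi_{l_2m_2}}\,\big(f_\eps*(f-f_0)\big),$$
and analogously without the conjugate. Writing $M_0$ for the block-diagonal matrix $[\E_{f_0}(\Phi_{l_1m_1}(Z)\overline{\Phi_{l_2m_2}(Z)})]$, the $f_0$-contributions are handled by Lemma~\ref{ordrevar}: they give the $L^{4\nu+2}/N^2$ term in $\E_f(B_N^2)$, and, via $\sum_{l_1m_1,l_2m_2}\overline{\mu_{l_1m_1}}\mu_{l_2m_2}(M_0)_{l_1m_1,l_2m_2}\le\|M_0\|_F\|\mu\|^2$ with $\|M_0\|_F^2\le c_3L^{4\nu+2}$ (Lemma~\ref{ordrevar}) and $\|\mu\|^2=\E_f(T_L)\le\|f-f_0\|_2^2$, the $\|f-f_0\|_2^2L^{2\nu+1}/N$ term in $\E_f(A_N^2)$; the rank-one corrections $-\mu_{l_1m_1}\overline{\mu_{l_2m_2}}$ in the covariances produce the $\|f-f_0\|_2^4$ contributions. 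For the perturbation integrals one uses $\|f_\eps*(f-f_0)\|_2\le d_1\|f-f_0\|_2$ (Assumption 1) together with $\sum_m|\Phi_{lm}(x)|^2\le d_0^{-2}l^{2\nu}(2l+1)/(4\pi)$ from \eqref{aa}: expanding $\Phi_{l_2m_2}=\sum_n(f^{\star l_2}_{\eps^{-1}})_{m_2n}\overline{Y^{l_2}_n}$, applying $\|f^{\star l_2}_{\eps^{-1}}\|_{op}\le d_0^{-1}l_2^\nu$ and Parseval to the function $\Phi_{l_1m_1}\cdot(f_\eps*(f-f_0))$, and then summing over $l_1,l_2\le L$, one gets the remaining contributions, bounded respectively by $\|f-f_0\|_2^2L^{4\nu+4}/N^2$, $\|f-f_0\|_2^3L^{2\nu+2}/N$ and $\|f-f_0\|_2^4/N$. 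Collecting all the pieces, and bounding $N^{-2}\le N^{-1}$ where needed, delivers the stated inequality.

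The genuinely delicate step is the control of the perturbation integrals $\int_{\S^2}\Phi_{l_1m_1}\overline{\Phi_{l_2m_2}}\,(f_\eps*(f-f_0))$: rather than expanding the triple product of spherical harmonics through Clebsch--Gordan coefficients, one must bound these integrals by coupling the $\L_2$-control of $f_\eps*(f-f_0)$ with the $\ell^2$- and $\ell^\infty$-estimates on the $\Phi_{lm}$, the point being to keep the powers of $L$ sharp enough to reach the announced exponents ($4\nu+4$ for the quadratic term, $2\nu+2$ for the cubic term) and no larger. By comparison, the Hoeffding decomposition, the uncorrelatedness of $A_N$ and $B_N$, and the index bookkeeping in $\E_f(B_N^2)$ are routine.
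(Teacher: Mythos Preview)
Your proof is correct and follows essentially the same approach as the paper: the paper expands $\var_f(T_L)$ directly by enumerating the index collisions among $(i_1,i_2,i_3,i_4)$, obtaining four terms (see \eqref{varianceT}) that correspond exactly to your Hoeffding pieces $\E_f(A_N^2)$ and $\E_f(B_N^2)$, and then bounds each by the very same splitting $f_Z=f_0+f_\eps*(f-f_0)$, Lemma~\ref{ordrevar}, the inequality $\|f_\eps*(f-f_0)\|_2\le d_1\|f-f_0\|_2$, and the estimate $\sum_{l_1,l_2,m_1,m_2}\|\Phi_{l_1m_1}\overline{\Phi_{l_2m_2}}\|_2^2\le CL^{4\nu+4}$ (obtained from \eqref{aa} exactly as you indicate). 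Your use of the Hoeffding decomposition is a cleaner packaging of the same index bookkeeping, but the substantive estimates are identical.
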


\begin{proof}
We have
$$\var_f(T_L)=\E((T_L-\E(T_L))(\overline{T_L- \E(T_L)}).$$
Simple calculations entail that
\begin{eqnarray*}
&&\var_f(T_L)= - \sum_{l_1,l_2=1}^L \sum_{m_1=-l_1}^{l_1}\sum_{m_2=-l_2}^{l_2}\ | f^{\star l_1}_{m_1}|^2 | f^{\star l_2}_{m_2}|^2 \\
&&+ \frac{4}{(N(N-1))^2} \left[ \sum_{l_1,l_2=1}^L \sum_{m_1=-l_1}^{l_1}\sum_{m_2=-l_2}^{l_2}\E\left (\sum_{i_1<i_2} \Phi_{l_1m_1}(Z_{i_1})\overline{\Phi_{l_1m_1}(Z_{i_2}}) \sum_{i_3<i_4}\overline{ \Phi_{l_2m_2}(Z_{i_3})}\Phi_{l_2m_2}(Z_{i_4})\right ) \right ]
\end{eqnarray*}
where $i_{1},i_{2},i_{3},i_{4}$ belong to $\{1,\dots, n\}$.
The term 
\begin{eqnarray*}
\E\left (\sum_{i_1<i_2} \Phi_{l_1m_1}(Z_{i_1})\overline{\Phi_{l_1m_1}(Z_{i_2}}) \sum_{i_3<i_4}\overline{ \Phi_{l_2m_2}(Z_{i_3})}\Phi_{l_2m_2}(Z_{i_4})\right )
\end{eqnarray*}
is bounded by
\begin{eqnarray*}
&&  \sum_{i_1< i_2} \sum_{i_3< i_4} \E(\Phi_{l_1m_1}(Z_{i_1})\overline{\Phi_{l_1m_1}(Z_{i_2})}\overline{\Phi_{l_2m_2}(Z_{i_3})}\Phi_{l_2m_2}(Z_{i_4}))
\\
&&=\sum_{i_1< i_2} \sum_{i_3< i_4} \bigg [ | f^{\star l_1}_{m_1}|^2 | f^{\star l_2}_{m_2}|^2  \1_{i_1\neq i_2 \neq i_3 \neq i_4}
+ |\E (\Phi_{l_1m_1}(Z)\overline{\Phi_{l_2m_2}(Z)})|^2\1_{i_1=i_3,i_2=i_4}\\
&&+ |\E(\Phi_{l_1m_1}(Z)\Phi_{l_2m_2}(Z))|^2\1_{i_1=i_4,i_2=i_3}
+ \E(\Phi_{l_1 m_1}(Z)\overline{\Phi_{l_2 m_2}(Z)}) \overline{ f^{\star l_1}_{m_1}}  f^{\star l_2}_{m_2}\1_{i_1=i_3,i_2\neq i_4}\\
&&+\E(\overline{\Phi_{l_1 m_1}(Z)}\Phi_{l_2 m_2}(Z))f^{\star l_1}_{m_1}  \overline{ f^{\star l_2}_{m_2}} \1_{i_1\neq i_3,i_2= i_4}
+\E(\Phi_{l_1 m_1}(Z)\Phi_{l_2 m_2}(Z))\overline{f^{\star l_1}_{m_1}  }\overline{ f^{\star l_2}_{m_2}} \1_{i_1= i_4,i_2\neq  i_3}\\
&&+\E(\overline{\Phi_{l_1 m_1}(Z)\Phi_{l_2 m_2}(Z)})f^{\star l_1}_{m_1}   f^{\star l_2}_{m_2} \1_{i_1\neq i_4,i_2= i_3})\bigg].
\end{eqnarray*}
Eventually we get that
\begin{eqnarray}\label{varianceT}
\var_f(T_L)&=&\sum_{l_1,l_2=1}^L\sum_{m_1=-l_1}^{l_1}\sum_{m_2=-l_2}^{l_2}\bigg[\left(\frac{(N-2)(N-3)}{N(N-1)} -1\right)| f^{\star l_1}_{m_1}|^2 | f^{\star l_2}_{m_2}|^2  \\
&&+\frac{1}{N(N-1)} \left(|\E (\Phi_{l_1m_1}(Z)\overline{\Phi_{l_2m_2}(Z)})|^2+|\E(\Phi_{l_1m_1}(Z)\Phi_{l_2m_2}(Z))|^2\right)\nonumber\\
&&+\frac{2(N-2)}{N(N-1)}\E(\Phi_{l_1 m_1}(Z)\overline{\Phi_{l_2 m_2}(Z)}) \overline{ f^{\star l_1}_{m_1}}  f^{\star l_2}_{m_2}\nonumber\\
&&+\frac{2(N-2)}{N(N-1)}\mathfrak{R}\left(\E(\Phi_{l_1 m_1}(Z)\Phi_{l_2 m_2}(Z))\overline{f^{\star l_1}_{m_1}  }\overline{ f^{\star l_2}_{m_2}}\right)
\nonumber\bigg]
%&&- | f^{\star l_1}_{m_1}|^2 | f^{\star l_2}_{m_2}|^2.   \nonumber
\end{eqnarray}
where $\mathfrak{R}(x)$ denotes the real part of $x$. 
We shall now upperbound each term that appears in the expression \eqref{varianceT} above. \\

$\triangleright$ First term. Since $\sum_{l=1}^L \sum_{m} | f^{\star l}_{m} |^2\leq \|f-f_0\|^2$, we obtain
$$\sum_{l_1,l_2=1}^L \sum_{m_1=-l_1}^{l_1}\sum_{m_2=-l_2}^{l_2}\left(\frac{(N-2)(N-3)}{N(N-1)} -1\right)| f^{\star l_1}_{m_1}|^2 | f^{\star l_2}_{m_2}|^2  \leq \frac{\|f-f_0\|_2^4}N.$$

$\triangleright$ Second term. Firstly
\begin{eqnarray*}
 |\E(\Phi_{l_1m_1}(Z)\overline{\Phi_{l_2m_2}(Z)})|^2&=&
\left|\int \Phi_{l_1m_1}\overline{\Phi_{l_2m_2}} f_0+\int \Phi_{l_1m_1}\overline{\Phi_{l_2m_2}}(f_Z-f_0)\right|^2\\
&\leq & 2  |\E_{f_0}(\Phi_{l_1m_1}(Z)\overline{\Phi_{l_2m_2}(Z)})|^2+2\left|\int \Phi_{l_1m_1}\overline{\Phi_{l_2m_2}}(f_Z-f_0)\right|^2\\
&\leq & 2  |\E_{f_0}(\Phi_{l_1m_1}(Z)\overline{\Phi_{l_2m_2}(Z)})|^2+2\|\Phi_{l_1m_1} \overline{\Phi_{l_2m_2}}\|^2_2\|f_Z-f_0\|^2_2.
\end{eqnarray*}
We can remark that, under Assumption 1, 
\begin{eqnarray*}
 \|f_Z-f_0\|^2_2=\sum_{l\geq 0, |m|\leq l}|(f_Z-f_0)^{\star l}_m|^2\leq \sum_{l\geq 0} \| f_\eps^{\star l} \|_{op} ^2
 \sum_{|m|\leq l }|(f-f_0)^{\star l}_m|^2\leq d_1^2\|f-f_0\|^2_2
\end{eqnarray*}
since $\| f_\eps^{\star l}\|_{op} \leq d_1$ for all $l$.
Now let us show that there exists $C_1>0$ such that
$$\sum_{l_1,l_2=1}^L \sum_{m_1=-l_1}^{l_1}\sum_{m_2=-l_2}^{l_2}\|\Phi_{l_1m_1}\overline{ \Phi_{l_2m_2}}\|^2_2\leq C_1 L^{4\nu +4}.$$
We deduce from \eqref{aa} that
 \begin{eqnarray*}
\sum_{m_1=-l_1}^{l_1}\int |\Phi_{l_1m_1}(x)|^2 |\Phi_{l_2m_2}(x)|^2  dx&\leq &
\frac{3d_0^{-2}}{4\pi}l_1^{2\nu +1}
\int | \Phi_{l_2m_2}|^2 \leq \frac{3d_0^{-2}}{4\pi} l_1^{2\nu +1}
\sum_{m=-l_2}^{l_2} |(f_{\eps^{-1}}^{*l_2})_{m_2 m}|^2\\&\leq &\frac{3d_0^{-4}}{4\pi}l_1^{2\nu +1}l_2^{2\nu}.
\end{eqnarray*}
 Then 
 $$\sum_{l_1,l_2=1}^L \sum_{m_1=-l_1}^{l_1}\sum_{m_2=-l_2}^{l_2}\|\Phi_{l_1m_1} \overline{\Phi_{l_2m_2}}\|^2_2 \leq \frac{3d_0^{-4}}{4\pi}\sum_{l_1,l_2=1}^L \sum_{m_2=-l_2}^{l_2}l_1^{2\nu +1}l_2^{2\nu}\leq C_1 L^{4\nu +4}$$
and, using Lemma~\ref{ordrevar}, 
\begin{eqnarray*}
 \sum_{l_1,l_2=1}^L \sum_{m_1  ,m_2} |\E (\Phi_{l_1m_1}(Z)\overline{\Phi_{l_2m_2}(Z)})|^2&\leq &2\sum_{l_1,l_2=1}^L \sum_{m_1  ,m_2} |\E_{f_0}(\Phi_{l_1m_1}(Z)\overline{\Phi_{l_2m_2}(Z)})|^2\\&&+ 2C_1L^{4\nu +4}d_1^2\|f-f_0\|^2_2\\
&\leq & C_2(L^{4\nu +2} + L^{4\nu +4}\|f-f_0\|^2_2 ).
\end{eqnarray*}
In the same way 
\begin{eqnarray*}
 \sum_{l_1,l_2=1}^L \sum_{m_1=-l_1}^{l_1}\sum_{m_2=-l_2}^{l_2} |\E (\Phi_{l_1m_1}(Z)\Phi_{l_2m_2}(Z))|^2\leq  C_2(L^{4\nu +2} + L^{4\nu +4}\|f-f_0\|^2_2) .
\end{eqnarray*}
Thus, the second term is bounded by a constant times $L^{4\nu +2} /N^2+L^{4\nu +4}\|f-f_0\|^2_2 /N^2$.\\

$\triangleright$ Third term. Using Cauchy-Schwarz inequality we get
\begin{eqnarray*}
\sum_{l_1,l_2=1}^L \sum_{m_1  ,m_2}  f^{\star l_1}_{m_1}  \overline{ f^{\star l_2}_{m_2}}\E(\Phi_{lm_1}(Z)\overline{\Phi_{lm_2}(Z)})
&\leq & \left(\sum_{l_1,l_2=1}^L \sum_{m_1  ,m_2}  | f^{\star l_1}_{m_1}|^2 | f^{\star l_2}_{m_2}|^2  \right)^{1/2}
\\&&\left( \sum_{l_1,l_2=1}^L \sum_{m_1  ,m_2}|\E(\Phi_{l_1m_1}(Z)\overline{\Phi_{l_2m_2}(Z)})|^2 \right)^{1/2}\\
&\leq \sqrt{C_2} &\|f-f_0\|^2_2(L^{2\nu+1} + L^{2\nu +2}\|f-f_0\|_2).
\end{eqnarray*}
The third term is of order $ \|f-f_0\|^2_2 L^{2\nu +1}/N+\|f-f_0\|^3_2 L^{2\nu +2}/N$.\\

$\triangleright$ Fourth term. We bound the fourth term in the same way as the third.\\

Finally we have the bound for $\var_f(T_L)$
$$\frac{L^{4\nu +2}}{N^2}+\frac{\|f-f_0\|^2_2L^{4\nu +4}}{N^2}+\frac{\|f-f_0\|^2_2 L^{2\nu +1}}{N}+\frac{\|f-f_0\|^3_2 L^{2\nu +2}}{N}+\frac{\|f-f_0\|^4_2}{N}.$$
\end{proof}

This gives 
\begin{eqnarray*}
 \frac{\var_f(T_{L^*})}{\|f-f_0\|^4}&\leq &c_4\left(\frac{L^{*4\nu +2}}{N^2\|f-f_0\|^4}+\frac{L^{*4\nu +4}}{N^2\|f-f_0\|^2}+\frac{L^{*2\nu +1}}{N\|f-f_0\|^2}+\frac{L^{*2\nu +2}}{N\|f-f_0\|}+\frac1N\right).
 \end{eqnarray*}
Besides,  as $ \| f-f_0 \|^2_2 \geq \C2^{-2s}L^{*-2s}$ and $N\geq L^{*2s+2\nu +1}\sqrt{\log\log N}$, we get an upperbound for (\ref{bornerisque2}) in terms of $L^*$
\begin{eqnarray*}
C_3 \left(\frac1{C^2\log\log N}+ \frac{L^{*2-2s}}{C\log\log N}+\frac1{C\sqrt{\log\log N}}+\frac{L^{*1-s}}{\sqrt{C\log\log N}}+\frac1N\right).
\end{eqnarray*}
% We distinguish three terms in the expression above, the first two tend to zero when $N$ goes to infinity.  As for the last one, when $s\geq \frac{1}{2}$ we ensures that $\frac{\tilde{C}L^{*-2s-2\nu+1} } {C_2\sqrt{\log\log N}}=o(1)$.
Since $s\geq 1$, all these terms tend to zero when $N$ goes to infinity,
and so does $\P_f(D_N=0)$.

Notice that this inequality gives a non asymptotic theoretical control of the second kind error of the test. Indeed this error is bounded by 
$C(s,\nu,d_0,d_1)(1-C_1/\C)^{-2}/\sqrt{\C}$, so it can be made lower than a fixed $\beta$, choosing $\C$ large enough.

\subsection{Proof of Theorem~\ref{bs2}}

This proof follows the same line as the one of Theorem~\ref{bs}. 
We first give an adaptation of  Lemma~\ref{var} in order to control the variance of $T_L$:
\begin{align}\begin{split}
\label{variance2}
{\rm Var}_f(T_L)&\leq C_0\left(\frac{L^{-4\nu_0 +2-\beta}}{N^2}e^{4L^\beta/\delta}
+\frac{\|f-f_0\|^2L^{-4\nu_0 +4-2\beta}}{N^2}e^{4L^\beta/\delta}\right.\\
&+\left.\frac{\|f-f_0\|^2L^{-2\nu_0 +1-\beta/2}}{N}e^{2L^\beta/\delta}+\frac{\|f-f_0\|^3L^{-2\nu_0 +2-\beta}}{N}e^{2L^\beta/\delta}
+\frac{\|f-f_0\|^4}{N}\right).
\end{split}\end{align}
This result is obtained with standard integrals evaluation which give for any real $\alpha$,
\begin{equation}\label{integ}
\sum_{l=1}^L l^\alpha e^{l^\beta/\delta}\leq C\int_1^{L+1} x^{\alpha} e^{x^\beta/\delta}dx\leq C' L^{\alpha+ 1-\beta}e^{L^\beta/\delta}
\end{equation}
(for $L$ large enough if $\alpha< 0$).
Now, we evaluate the first type error. Using that $\E_{f_0}(T_{L^*})=0$, we write
\begin{eqnarray*}
 \P_{f_0}(D_N=1)&= &\P_{f_0}\left(|T_{L^*}|>K_0t_{L^*}^2\right)
 \leq K_0^{-2}t_{L^*}^{-4}{\rm Var}_{f_0}(T_{L^*})\\
&\leq &K_0^{-2}C_0t_{L^*}^{-4}L^{*-4\nu_0 +2-\beta}\exp{(4L^{*\beta}/\delta)}N^{-2} \leq  K_0^{-2} C_0L^{*-\beta}=o(1)
\end{eqnarray*}
when $N$ goes to infinity.. To bound the error of the second kind, let $f\in H_1(s, R, \C\psi_N)$. We have
$$\P_f(D_N=0)\leq \P_f\left( |T_{L^*}|\leq K_0{t}^2_{L^*}\right)
\leq \P_f\left( | T_{L^*} -\E_f(T_{L^*}) | \geq \| f-f_0 \|^2_2 -K_0t^2_{L^*} - B_f(T_{L^*})\right).$$
The definition of $L^*$ implies that, 
 %$$L^*=\left\lfloor\left(\frac{\delta}3 \log(N) \right)^{1/\beta}\right\rfloor$$
 for $N$ large enough 
$$\left(\frac{\delta}{16} \log(N) \right)^{1/\beta}\leq L^*\leq\left(\frac{\delta}8 \log(N) \right)^{1/\beta}.$$
That ensures that $L^{*-2s}\leq (\delta/16)^{-2s/\beta}\psi_N$ and 
$t^2_{L^*}\leq (\delta\log N/8)^{(-2\nu_0+1)/\beta}N^{-3/4}\leq \psi_N$ for $N$ large enough. 
We set $C_1=K_0+((4\pi)^{-1}+R^2)(\delta/16)^{-2s/\beta}$ and $C_2= 1-C_1/\C$ (which is positive if $\C$ large enough). Markov inequality yields 
\begin{equation}\label{bornerisque3}
 \P_f(D_N=0) \leq  \frac{\var_f(T_{L^*}) }  {   C_2^2\|f-f_0\|^4}.
  \end{equation}
Using \eqref{variance2}, we bound
\begin{eqnarray*}
 \frac{\var_f(T_{L^*})}{\|f-f_0\|^4}&\leq &C_0\left(\frac{L^{*-4\nu_0 +2-\beta}}{N^2\|f-f_0\|^4}e^{(4L^{*\beta}/\delta)}+\frac{L^{*-4\nu_0 +4-2\beta}}{N^2\|f-f_0\|^2}e^{(4L^{*\beta}/\delta)}\right.\\
&&\left.+\frac{L^{*-2\nu_0 +1-\beta/2}}{N\|f-f_0\|^2}e^{(2L^{*\beta}/\delta)}+\frac{L^{*-2\nu_0 +2-\beta}}{N\|f-f_0\|}e^{(2L^{*\beta}/\delta)}+\frac1N\right).
 \end{eqnarray*}
Besides,  as $ \| f-f_0 \|^2_2 \geq C_3L^{*-2s}$, we get the following upperbound 
\begin{eqnarray*}
&&C_4\left(\frac{(\log N)^{(-4\nu_0 +2-\beta+4s)/\beta}N^{1/2}}{N^2}+\frac{(\log N)^{(-4\nu_0 +4-2\beta+2s)/\beta}N^{1/2}}{N^2}\right.\\
&&\left.+\frac{(\log N)^{(-2\nu_0 +1-\beta/2+2s)/\beta}N^{1/4}}{N}+\frac{(\log N)^{(-2\nu_0 +2-\beta+s)/\beta}N^{1/4}}{N}
+\frac1N\right).
\end{eqnarray*}
and all these terms tend to zero when $N$ goes to infinity.

\subsection{Proof of Theorem~\ref{bi2}}

The proof is analogous to the proof of Theorem~\ref{bi}, with hypothesis functions
$$f_\theta=f_0+\sum_{m=-L}^L\theta_{Lm}\varphi_{Lm},\qquad \P(\theta_{Lm}=\pm \gamma)=1/2,$$
where $$\gamma^2=c_1 \exp(-2L^\beta/\delta)L^{-2s+2\nu_0-1}$$
and $$L=\left\lfloor (2\delta\log(N))^{1/\beta}\right\rfloor.$$
This choice of $L$ ensures that, for $N$ large enough,
$$(\delta\log(N))^{1/\beta}\leq L \leq (2\delta\log(N))^{1/\beta}.$$
The four steps of the proof of Theorem~\ref{bi} can be rewritten. Moreover,  in this supersmooth case, the bound on the chi-square divergence is stronger,  so $c_1$ can be chosen  independently of $\eta$.

\section*{Acknowledgements}
We would like to thank Erwan Le Pennec and Richard Nickl for interesting discussions and suggestions.

\bibliographystyle{apalike}
\bibliography{biblio}

 \end{document}